\title{A non-backtracking method for long matrix and tensor completion}
\date{}
\algrenewcommand\algorithmicrequire{\textbf{Input:}}
\algrenewcommand\algorithmicensure{\textbf{Output:}}
\newcommand{\dR}{\mathbb{R}}
\newcommand{\dN}{\mathbb{N}}
\newcommand{\dQ}{\mathbb{Q}}
\newcommand{\dF}{\mathbb{F}}
\newcommand{\dE}{\mathbb{E}}
\newcommand{\dP}{\mathbb{P}}
\newcommand{\cE}{\mathcal{E}}
\newcommand{\cF}{\mathcal{F}}
\newcommand{\cG}{\mathcal{G}}
\newcommand{\cI}{\mathcal{I}}
\newcommand{\cL}{\mathcal{L}}
\newcommand{\cM}{\mathcal{M}}
\newcommand{\cN}{\mathcal{N}}
\newcommand{\cS}{\mathcal{S}}
\newcommand{\cT}{\mathcal{T}}
\newcommand{\ind}{\mathbf{1}}
\providecommand{\given}{}
\DeclarePairedDelimiterXPP{\Pb}[1]{\mathbb{P}}{\lparen}{\rparen}{}{\renewcommand{\given}{\nonscript{}\:\delimsize\vert\nonscript{}\:\mathopen{}} #1}
\DeclarePairedDelimiterXPP{\E}[1]{\mathbb{E}}[]{}{\renewcommand{\given}{\nonscript{}\:\delimsize\vert\nonscript{}\:\mathopen{}} #1}
\DeclarePairedDelimiterX{\Set}[1]\lbrace\rbrace{\renewcommand{\given}{\nonscript{}\:\delimsize\vert\nonscript{}\:\mathopen{}} #1}
 \DeclareMathOperator{\diag}{diag}
\DeclareMathOperator{\tr}{tr}
\DeclareMathOperator{\im}{im}
\DeclareMathOperator{\vect}{vect}
\DeclareMathOperator{\sign}{sign}
\DeclarePairedDelimiterX{\norm}[1]\lVert\rVert{\ifblank{#1}{\: \cdot \:}{#1}}
\DeclareMathOperator{\Poi}{Poi}
\DeclareMathOperator{\Bin}{Bin}
\DeclareMathOperator{\Unif}{Unif}
\DeclareMathOperator{\dtv}{d_{TV}}
\newcommand{\bilingualcommand}[3]{%
	\newcommand{#1}[1][\ ]{%
		##1%
		\iflanguage{english}{\text{#2}}{%
			\iflanguage{french}{\text{#3}}{}%
		}%
		##1%
	}%
}
\bilingualcommand{\where}{where}{où}
\bilingualcommand{\textif}{if}{si}
\bilingualcommand{\textand}{and}{et}
\bilingualcommand{\textiff}{if and only if}{si et seulement si}
\bilingualcommand{\otherwise}{otherwise}{sinon}
\newcommand{\eps}{\varepsilon}
\newcommand{\quand}{\quad \textand{} \quad}
\newcommand{\qquand}{\qquad \textand{} \qquad}
 \newtheorem{theorem}{Theorem}
 \newtheorem{lemma}{Lemma}
 \newtheorem{proposition}{Proposition}
 \newtheorem{definition}{Definition}
\newtheorem{corollary}{Corollary}
\author{Ludovic Stephan\\
Information, Learning and Physics (IdePHICS)\\
École Polytechnique Fédérale de Lausanne (EPFL) \\
\texttt{ludovic.stephan@epfl.ch}
\and 
Yizhe Zhu\\
Department of Mathematics\\
University of California, Irvine\\
\texttt{yizhe.zhu@uci.edu}}
\begin{document}

\maketitle

\begin{abstract}%
We consider the problem of low-rank rectangular matrix completion in the regime where the matrix $M$  of size $n\times m$ is ``long", i.e., the aspect ratio $m/n$ diverges to infinity. Such matrices are of particular interest in the study of tensor completion, where they arise from the unfolding of a low-rank tensor. In the case where the sampling probability is $\frac{d}{\sqrt{mn}}$, we propose a new spectral algorithm for recovering the singular values and left singular vectors of the original matrix $M$ based on a variant of the standard non-backtracking operator of a suitably defined bipartite weighted random graph, which we call a \textit{non-backtracking wedge operator}. When $d$ is above a Kesten-Stigum-type sampling threshold, our algorithm recovers a correlated version of the singular value decomposition of $M$ with quantifiable error bounds. This is the first result in the regime of bounded $d$ for weak recovery and the first for weak consistency when $d\to\infty$ arbitrarily slowly without any polylog factors. As an application, for low-CP-rank orthogonal $k$-tensor completion,  we efficiently achieve weak recovery with sample size $O(n^{k/2})$ and weak consistency with sample size $\omega(n^{k/2})$. A similar result is obtained for low-multilinear-rank tensor completion with $O(n^{k/2})$ many samples.

\end{abstract}

\section{Introduction}

Matrix completion is the problem of reconstructing a matrix from a (usually random) subset of entries, leveraging prior structural knowledge such as rank and incoherence. When the two dimensions $n, m$ of the matrix $M$ are comparable, this problem has been widely studied in the past decades, with a fairly general pattern emerging. Roughly speaking, if the ground truth matrix is low-rank and eigenvectors are sufficiently incoherent (or delocalized), sampling $\Omega(n\log n)$ many entries uniformly at random with a certain efficient optimization procedure, one can exactly recover the ground truth matrix \cite{keshavan.montanari.ea_2009_matrix,keshavan2010matrix,candes.tao_2010_power,candes.plan_2010_matrix,recht2011simpler,candes2012exact,jain2013low}. This is not the case when the sample size is only $O(n)$, where full completion is provably impossible under uniform sampling. However, an approximation of the ground truth matrix is still possible \cite{heiman2014deterministic,gamarnik2017matrix,brito2022spectral} with efficient algorithms, and more refined estimates of the eigenvectors were studied in \cite{bordenave2020detection}.

In recent years, attention has shifted to ``long'' matrix models where the row and column sizes of the matrix are not proportional. This is the case, for example, in the spiked long matrix and tensor models \cite{montanari.wu_2022_fundamental,ding.hopkins.ea_2020_estimating,arous2021long,montanari.richard_2014_statistical}. This situation also occurs in bipartite graph clustering and community detection, where the goal is to recover the community structure in the smaller vertex set \cite{florescu2016spectral,ndaoud2021improved,cai2021subspace,braun2022minimax,braun2023strong} for a bipartite stochastic block model, and in subspace recovery problems \cite{cai2021subspace,zhou2021rate}.

Our work is closely related to tensor completion, which is the higher-order analog of matrix completion. Indeed, tensor problems are highly related to their long matrix counterparts through the unfolding operation for an order-$k$ tensor of size $n^k$:
\[
    \mathrm{unfold}_{a, b}: \dR^{n^k} \to \dR^{n^a \times n^b},
\]
where $a + b = k$. For example, an order-3 tensor of size $n$ yields an unfolded matrix of size $n \times n^2$; in general, uneven unfoldings (i.e., with $a \neq b$) of a tensor give rise to long rectangular matrices.
In those tensor problems, the ground truth is assumed to have a low CP-rank, which implies that the unfolded matrix is low-rank. In low-rank $k$-tensor completion, the best known polynomial-time algorithms require $\tilde O(n^{k/2})$ revealed entries \cite{jain2014provable,montanari.sun_2018_spectral,xia2019polynomial,cai2021nonconvex}, while information theoretically $\tilde{O}(n)$ suffices \cite{ghadermarzy2019near,harris2021,harris2023spectral}. In \cite{barak.moitra_2016_noisy}, the authors showed lower bounds
for a family of algorithms based on the sum-of-squares hierarchy for order-3 noisy tensor completion, where $\Omega(n^{3/2})$ many samples are needed,  by connecting
it to the literature on refuting random 3-SAT. They also conjectured such a sample size is needed for any polynomial-time algorithm. This is aligned with other tensor estimation problems where a diverging statistical-to-computational gap is expected \cite{montanari.richard_2014_statistical,wein2019kikuchi,jagannath2020statistical,arous2020algorithmic,dudeja2022statistical,auddy2021estimating}. However, so far no known work has reached the exact threshold $\Omega(n^{3/2})$, only approaching it up to polylog factors, in a similar fashion to the matrix completion phase diagram described above.
For exact recovery results on tensor completion, all the methods in the literature are either convex optimization or spectral initialization with iterative optimization steps. Our method achieves weak consistency without any log factors, which could improve the sample size for spectral initialization in the work \cite{jain2014provable,cai2021nonconvex,xia2019polynomial,xia2021statistically}. 

The applications we mentioned above are related to the same question: 

\emph{Can we improve the sample complexity for estimating a one-sided structure in a  long matrix?}

Specifically, we explore the use of the \textit{non-backtracking operator} in this context. The non-backtracking operator is an important object in the study of spectral graph theory \cite{bass_iharaselberg_1992,terras2010zeta}. Recently, it has been a powerful tool for understanding the spectrum of a very sparse random matrix \cite{bordenave2018nonbacktracking,stephan2020non, bordenave2020new,bordenave2019eigenvalues, brito2022spectral,benaych2020spectral,alt2021extremal,alt2021delocalization,dumitriu2022extreme,stephan2022sparse} and for designing efficient algorithms in community detection and matrix completion \cite{krzakala2013spectral,bordenave2018nonbacktracking,stephan2020non,bordenave2020detection,stephan2022sparse,zhu2023non}. In the bipartite graph setting, the non-backtracking spectrum was analyzed in \cite{brito2022spectral,dumitriu2022extreme,dumitriu2021spectra} when the two sides have a bounded aspect ratio. For the spectrum of the adjacency matrix of a random bipartite graph with a diverging aspect ratio, only a few results were obtained in \cite{zhu2022second,dumitriu2020global} for random bipartite biregular graphs, and in \cite{guruswami2022l_p} for the adjacency matrix of bipartite random graphs with random sign flips.

In this work, we aim to address this gap by defining a non-backtracking operator suitable for sparse matrix estimation with a diverging aspect ratio,  which we call \textit{a non-backtracking wedge matrix}. Our work sheds light on the fundamental limits and practical algorithms for sparse long matrix estimation with a focus on one-sided structure recovery. To the best of our knowledge, this is the first result for a long $n\times m$ low-rank matrix completion with sampling probability $p=\frac{d}{\sqrt{mn}}$ in the regime of bounded $d$ for weak recovery and the first result for weak consistency when $d\to\infty$ arbitrarily slowly without any polylog factors. These two types of recovery guarantees are well studied in the literature of community detection \cite{abbe_2018_community}. In fact, a direct adaptation of the bipartite non-backtracking operator we define here leads to a simpler spectral algorithm for the detection of the bipartite stochastic block model studied in \cite{florescu2016spectral}. 
Below, We summarize our results informally. See Theorems~\ref{thm:eigenvalues} and \ref{thm:eigenvectors} for the precise statements.
\begin{theorem}[long matrix completion, informal]\label{thm:informal}
    With $O(\sqrt{mn})$ many samples, if the left singular vectors of a long $n\times m$ matrix $(m\gg n)$ are sufficiently delocalized, and the singular values are above a specific Kesten-Stigum threshold,  then the non-backtracking wedge operator can recover the ground truth singular values and find correlated estimates of the left singular vectors.
\end{theorem}

The \textit{Kesten-Stigum threshold} in Theorem~\ref{thm:informal} is analogous to a similar threshold for community detection in the stochastic block model, which characterizes a threshold of the signal-to-noise ratio for the existence of polynomial time algorithms to achieve weak recovery; see \cite{abbe_2018_community} for more details.  We discuss the Kesten-Stigum bound in matrix completion in Section~\ref{sec:discussion}.

We then apply our main results to the low-rank tensor completion problem under an orthogonality assumption on the $r$ components in a tensor decomposition. This is the first efficient algorithm to achieve weak recovery of a low-rank tensor with $\Omega(n^{k/2})$ samples without any extra $\mathrm{polylog}(n)$ factors. In particular, an explicit  $n^{k/2}$ threshold is obtained for rank-1  tensor completion when the entries are $\pm 1$. The weak recovery results can serve as a good spectral initialization step for other optimization procedures \cite{montanari.sun_2018_spectral,jain2014provable,cai2021nonconvex}. Unlike the common approach of unfolding a tensor to a matrix as square as possible ( i.e., applying $\mathrm{unfold}_{\lfloor k/2\rfloor, \lceil k/2\rceil}$ to $T$) in \cite{mu2014square,montanari.sun_2018_spectral}, we unfold $T$ to an $n \times n^{k-1}$ matrix in the most unbalanced way, and this allows us to directly estimate individual components of the tensor decomposition with sample complexity $O(n^{k/2})$. Note that simply applying the results in \cite{bordenave2020detection} to $\mathrm{unfold}_{\lfloor k/2\rfloor, \lceil k/2\rceil}(T)$ will have sample complexity $O(n^{\lceil k/2\rceil})$, and one cannot directly estimate any individual component in $T$. Therefore, our algorithm has a better computational complexity for tensor completion for all integer values $k$. The long matrix completion result also has a direct application in low-multilinear-rank tensor completion. Detailed statements and discussion on tensor completion are given in Sections \ref{sec:tensor_completion} and \ref{sec:tucker}.

Although there is a strong connection between the detection problem for matrix completion and community detection in the stochastic block model \cite{angelini.caltagirone.ea_2015_spectral,stephan2020non,bordenave2018nonbacktracking,bordenave2020detection}, the statistical-to-computational gap in the hypergraph stochastic block model behaves very differently from tensor completion \cite{Pal_2021,stephan2022sparse,gu2023weak}. In hypergraph community detection, efficient spectral methods can achieve weak recovery with the sample complexity down to the information-theoretical threshold up to a constant factor in all cases. However, in tensor completion, there is a conjectured diverging gap between the information-theoretical sample complexity and sample complexity for efficient algorithms \cite{barak.moitra_2016_noisy}.

The non-backtracking wedge operator we defined in Section~\ref{sec:setting} is tailored for the situation where the underlying sparse bipartite graph has a diverging aspect ratio. It is similar in spirit to the non-backtracking operator for hypergraphs \cite{angelini.caltagirone.ea_2015_spectral,liu2022statistical,stephan2022sparse}, and the nomadic walk operator in \cite{mohanty2020sdp,guruswami2022l_p} for (signed) random bipartite biregular graphs. The main difference compared to \cite{guruswami2022l_p,mohanty2020sdp} is that we work with a non-centered random matrix model without degree homogeneity, and we only count one-sided non-backtracking walks. In addition, a detailed eigenvector analysis is provided while \cite{guruswami2022l_p,mohanty2020sdp} are only about extreme eigenvalues. In terms of tensor estimation with path-counting arguments, another relevant work is the estimation of a spiked tensor model under heavy-tail noise \cite{ding.hopkins.ea_2020_estimating}, where the authors use  $n\times n$ symmetric
matrices to count the number of self-avoiding walks of length $c\log n$. However, those matrices are usually more expensive
to compute and require a new
parameter (the length of the paths considered) that needs tuning in practice.

\section{Setting and main results}\label{sec:main}

\subsection{Detailed setting for long matrix completion}\label{sec:setting}
We consider a rectangular matrix $M$ of size $n \times m$, with aspect ratio $\alpha = m/n$. We write the singular value decomposition of $M$ as
\begin{equation}\label{eq:SVD}
    M = \sum_{i = 1}^n \nu_i \phi_i \psi_i^\top, \notag
\end{equation}
where the $\nu_i$ are the singular values of $M$, and the $(\phi_i)_{i\in[r]}$ (resp. the $(\psi_i)_{i\in[r]}$) are an orthonormal family of its left (resp. right) singular vectors. We order the singular values of $M$ in decreasing order, that is
$ \nu_1 \geq \dots \geq \nu_n.$
Our observed matrix is of the form
\begin{equation}\label{eq:weighted_A}
    A = \frac{\sqrt{mn}}d (X \circ M),  
\end{equation}
where $X$ is an i.i.d Bernoulli matrix:
\begin{equation}
    X_{ij} \stackrel{i.i.d}{\sim} \  \mathrm{Ber}\left( \frac{d}{\sqrt{mn}}\right). \notag 
\end{equation}
We will use the shortcut $p = d/\sqrt{mn}$ as the sampling probability. The scaling of $A$ is chosen so that the expected value of $A$ is exactly $M$. Our goal is to weakly recover the eigendecomposition: \[MM^\top=\sum_{i=1}^n \nu_i^2 \phi_i \phi_i^\top.\]

\paragraph{The non-backtracking wedge matrix}
We can view the matrix $A$ as the weighted biadjacency matrix of a bipartite graph $G = (V_1 \cup V_2, E)$ with $|V_1| = n$, $|V_2| = m$. We call $V_1$ a \textit{left} vertex set and $V_2$ a \textit{right} vertex set. A path of length 2 in $G$ denoted by $(e_1,e_2,e_3)$ is called a \textit{wedge} if $e_1,e_3\in V_1, e_2\in V_2,$ and $e_1\not=e_3$. Define the set of \emph{oriented wedges} $\vec{E}$ as
\begin{equation}\label{eq:wedge}
    \vec{E} = \Set*{(e_1, e_2, e_3) \in V_1 \times V_2 \times V_1 : \{e_1, e_2\} \in E, \{e_2, e_3\} \in E, e_1 \neq e_3}.
\end{equation}
The \emph{non-backtracking wedge} matrix $B$ associated to $G$ is defined on $\vec{E} \times \vec{E}$ by
\begin{equation}\label{eq:defB}
    B_{ef} = \ind_{e \to f} A_{f_1 f_2} A_{f_3 f_2}, \quad \text{where} \quad \ind_{e \to f} = \ind_{e_3 = f_1} \ind_{e_2 \neq f_2}. 
\end{equation}
Equivalently, $\ind_{e\to f}$ is one if and only if $(e_1, e_2, e_3, f_2, f_3)$ is a non-backtracking walk of length 4 in $G$. See Figure~\ref{fig:NBW} for an example. Note that the non-backtracking wedge matrix $B$ is not symmetric: its index set
is the oriented wedge set defined in \eqref{eq:wedge}, and $B_{ef} \not= B_{fe}$ in \eqref{eq:defB}.

\begin{figure}
    \centering
    \begin{tikzpicture}[scale=0.60]
       \node[draw,circle, fill=orange](A) at (0,0) {$e_1$};
       \node[draw, minimum size=.6cm, fill=green](B) at (2, 3) {$e_2$};
       \node[draw,circle, fill=orange](C) at (4,0) {$e_3$};
       \node[draw, minimum size=.6cm, fill=green](D) at (6, 3) {$f_2$};
       \node[draw, circle, minimum size=.6cm, fill=orange](E) at (8, 0) {$f_3$};
       \draw [-{Latex}] (A) -- (B);
       \draw [-{Latex}] (B) -- (C);
       \draw [-{Latex}] (C) -- (D);
       \draw [-{Latex}] (D) -- (E);
    \end{tikzpicture}
    \caption{Example of a non-backtracking path between two wedges $(e_1, e_2, e_3)$ and $(f_1, f_2, f_3)$.}\label{fig:NBW}
\end{figure}
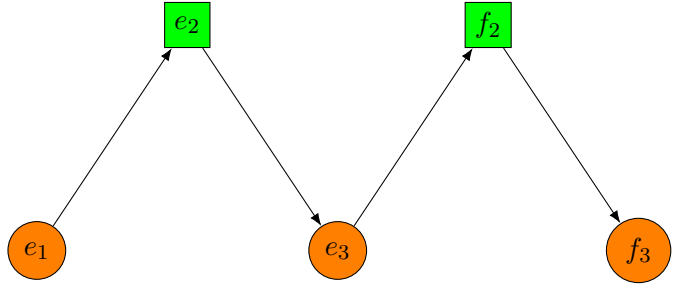

For comparison, we recall the definition of the \textit{non-backtracking operator} for weighted graphs as follows. Let $A$ be a weighted adjacency matrix of a graph $G=(V,E)$. Define the oriented edge set $\vec{E}$ for $G$ as $\vec{E}= \{ (i,j): \{ i,j\} \in E\}$. 
      The \textit{non-backtracking operator} $B$ of associated with $A$ is a non-Hermitian operator of size $|\vec E| \times |\vec E|$. For any $(u,v), (x,y)\in \vec{E}$, $B$ is defined as 
      \begin{align}\label{eq:standardB}
B_{(u,v),(x,y)} = \begin{cases}
     A_{xy} &\text{$v=x$, $u \neq y$}, \\
     0 &\text{otherwise.}
\end{cases}
\end{align} 
In the case of an $n\times m$ bipartite weighted graph defined  in \eqref{eq:weighted_A}, the non-backtracking wedge operator only counts non-backtracking walks starting from a vertex in $V_1$ while the non-backtracking operator defined in \eqref{eq:standardB} counts non-backtracking walks from both $V_1$ and $V_2$.

\paragraph{Intuition for the non-backtracking wedge matrix}
It has been shown that the non-backtracking operator, as a non-hermitian matrix, outperforms the standard (weighted) adjacency matrix when the sample size is very small in many graph clustering and matrix completion problems \cite{bordenave2018nonbacktracking,bordenave2020detection,stephan2020non,stephan2022sparse}. The key difference is that the spectrum of the non-backtracking operator is more informative and stable when the underlying graph is very sparse with a constant average degree. It is also a general phenomenon that the spectrum of non-hermitian random matrices is less sensitive to a row or column with a large $\ell_2$-norm \cite{coste2023sparse,bordenave2022convergence}.

In terms of the sample size, the non-backtracking matrix in \eqref{eq:standardB}  was used in \cite{bordenave2020detection,stephan2020non} to achieve the optimal scale $O(n)$ for square matrix completion. However, for a low-rank tensor in a matrix form of size $n\times n^2$, the work \cite{bordenave2020detection} only achieves weak recovery with sample size $O(n^2)$, while with the non-backtracking wedge operator, we only need $O(n^{1.5})$. The standard way in \cite{montanari.sun_2018_spectral,cai2021nonconvex} to obtain $\tilde O(n^{1.5})$ sample size is to apply a spectral method on the hollowed Gram matrix $Z=AA^T-\text{diag}(AA^T)$. For $i\not=j$, we have \[Z_{ij} =  \sum_{\text{wedge } i \to k \to j} A_{ik} A_{kj}.\] 
Hence, the spectrum of $Z$ depends on wedge walk counts. Our new non-backtracking operator only counts the wedge walks that are \textit{non-backtracking}, which avoids the localization effect from high-degree vertices \cite{benaychgeorges.bordenave.ea_2019_largest} in the weighted graph corresponding to $Z$.  This new operator introduces a  tool to  ``regularize'' the spectrum of a heavy-tailed long random matrix without any tuning parameter, which might be useful for other applications.

\subsection{Quantifying the convergence}
\label{sec:parameters}
The alignment between the spectral decomposition of $B$ and the SVD of $A$ will be governed by a wealth of different quantities. For ease of understanding, we divide those parameters into two categories, depending on their role in the convergence.

\paragraph{Threshold parameters} These parameters describe which parts of the SVD of $M$ will be reflected in the spectral decomposition of $B$.
\begin{enumerate}
    \item \textbf{Variance parameter}:
    \begin{equation}\label{eq:def_Q_rho}
        \rho = \norm{Q}, \quad \text{where} \quad Q = \sqrt{mn}(M \circ M).
    \end{equation}

    \item \textbf{Amplitude parameter}:
    \begin{equation}
        L = \sqrt{mn} \max_{x\in [n], y \in [m]} |M_{xy}| . \notag 
    \end{equation}

    \item \textbf{Detection thresholds}
    \begin{equation}
        \theta = \max(\theta_1, \theta_2), \notag
    \end{equation}
    where the two parameters $\theta_1$ and $\theta_2$ are defined as
    \begin{equation}
     \theta_1 = \sqrt{\frac{\rho}{d}} \quand    \theta_2 =  \frac L d . \notag
    \end{equation}

    \item \textbf{Detection rank}:
    \begin{equation}
        r_0 = \max \Set{i\in [n]: \nu_{i} > \theta}. \notag
    \end{equation}
    Equivalently, it is the number of singular values of $M$ above the detection threshold $\theta$.

    \item \textbf{Inverse signal-to-noise ratios (inverse SNRs)}: for $i \in [r_0]$,
    \begin{equation}
        \tau_i = \frac{\theta}{\nu_i} \in (0, 1). \notag
    \end{equation}
    The largest of those inverse SNRs, $\tau_{r_0}$, will be denoted by $\tau$.
\end{enumerate}

\paragraph{Complexity parameters} Those parameters quantify how intrinsically ``easy'' the matrix $M$ is to recover. In particular, contrary to the previous ones, they are all invariant under any rescaling of $M$.
\begin{enumerate}
    \item \textbf{Rescaled amplitude parameter}:
    \begin{equation}\label{eq:def_K}
        K = \frac{L^2}{\rho}.
    \end{equation}

    \item \textbf{Aspect log-ratio}:
    \begin{equation}\label{eq:def_eta}
        \eta = \frac{\log(m)}{\log(n)}-1.
    \end{equation}
    Equivalently, it is the solution of the equation $m = n^{1+\eta}$. When $M$ is obtained from the $(a, b)$ unfolding of a tensor, we get $\eta = \frac{b-a}{a}$.   
    \item \textbf{Rank}:
    \begin{equation}
        r = \mathrm{rank}(M) = \max \Set{i\in [n]: \nu_{i} \neq 0}. \notag
    \end{equation}

    \item \textbf{Delocalization parameter}:
    \begin{equation}
        \kappa = \sqrt{n} \max_i \norm{\phi_i}_\infty. \notag
    \end{equation}
    Note that $\kappa\geq 1$ by definition. We require no such conditions on the right singular vectors $\psi_i$.
\end{enumerate}

\paragraph{Simplifying assumptions} We also make a few simplifying assumptions throughout the paper; although our results are still valid when those assumptions aren't satisfied, the proofs become much messier. 
Recall the aspect ratio is defined by $\alpha=\frac{m}{n}\geq 1$.  We first assume that
\begin{equation}\label{eq:d_lowerbound}
   \liminf_{n} d>1 \quand d\leq n.
\end{equation}
 We also assume that 
\begin{equation}\label{eq:assumption_alpha}
    \eta \geq \frac{2\log(d)+2\log\log_d(n)}{\log(n)}, \quad \text{or equivalently} \quad \alpha \geq d^2(\log_d n)^2.
\end{equation} 
We note that if  $d>n$, our results become trivially true (see the definition of $\ell$ in Theorem \ref{thm:eigenvalues}, and the bounds thereafter).

\subsection{Main results for long matrix completion}\label{sec:main_matrix}

Our first result shows that the eigenvalues of $B$ exhibit a structure similar to those observed in \cite{stephan2020non, bordenave2020detection}: namely, most eigenvalues of $B$ lie in a circle of known radius, with a few outlier eigenvalues correlated with the singular values of $M$.

\begin{theorem}[Singular value estimation]\label{thm:eigenvalues}
    Let
    \begin{equation}
        \ell = \lfloor\eps \log_d(n) \rfloor, \quad \text{with} \quad \eps = \frac{(\eta / 2) \wedge 1}{25}. \notag 
    \end{equation}
    There exist constants $C_0,n_0\geq 1 $ that depend polynomially on $r, K,\kappa, d, \log n$  such that, if $n \geq n_0$ and $C_0 \tau^{2\ell} \leq 1$, the following holds on an event with probability at least $1 - cd^4n^{-1/4}$: 
    \begin{enumerate}
        \item There exists an ordering $\lambda_1, \dots, \lambda_{r_0}$ of the top $r_0$ eigenvalues of $B$ such that for any $i \in [r_0]$,
    \begin{equation}
        |\lambda_i - \nu_{i}^2| \leq C_0 \nu_{i}^2\tau_i^{2\ell}.  \notag 
    \end{equation}
    \item All other eigenvalues of $B$ have modulus at most $C_0^{1/\ell}\theta^2$. \label{thm:radius_bound}
    \end{enumerate}
\end{theorem}

Our second result concerns quantifying the overlaps between the top eigenvectors of $B$ and the singular vectors of $M$. For $i \in [r_0]$, we define the relative eigengap
    \begin{equation}\label{eq:delta}
        \delta_{i, \ell} = \min_{\nu_j \neq \nu_i} \left| 1 - \left( \frac{\nu_j}{\nu_i} \right)^{2\ell}\right|.
    \end{equation}
    Let $\xi_i^R, \xi_i^L$ a right (resp. left) unit eigenvector of $B$ associated with $\lambda_i$, and define
    \begin{equation}\label{eq:reduced_eigenvec}
         \zeta_i^R(x) := \sum_{e\in \vec{E}: e_1 = x} A_{e_1 e_2} A_{e_3 e_2} \xi_i^R(e) \quand \zeta_i^L(x) := \sum_{e\in \vec{E}: e_1 = x}\xi_i^L(e).
    \end{equation}

\begin{theorem}[Left singular vector estimation]\label{thm:eigenvectors}
    Under the same setting and conditions as Theorem \ref{thm:eigenvalues},  with probability at least $1-cd^4n^{-1/4}$, for $i\in [r_0]$, there exists a unit left singular vector $\phi_i'$ of $M$ associated with $\nu_i$ such that
    \begin{equation}
        \left| \langle  \zeta_i^R, \phi_i' \rangle - \frac{1}{\sqrt{\gamma_i}} \right| \leq \frac{ C_0 \tau_i^{2\ell}}{\delta_{i, \ell}} \quand \left| \langle  \zeta_i^L, \phi_i' \rangle - \frac{1}{\sqrt{\gamma_i}} \right| \leq \frac{C_0  \tau_i^{2\ell}}{\delta_{i, \ell}}, \label{eq:overlap_gamma}
    \end{equation} 
    where $\gamma_i$ is defined as 
    \begin{equation}\label{eq:def_gamma}
        \gamma_i = \left\langle \ind, \left(I - \frac{QQ^*}{\nu_i^4 d^2} \right)^{-1} \left( \phi_i \circ \phi_i \right) \right\rangle.
    \end{equation}
\end{theorem}

\subsection{Discussion}\label{sec:discussion}

\paragraph{Comparison to \cite{bordenave2020detection}} In the seminal work of \cite{bordenave2020detection}, the authors provide a spectral algorithm to weakly recover a low-rank rectangular matrix of size $m\times n$ with $O(m+n)$ many samples, where a Kesten-Stigum bound for estimation singular values and both left and right eigenvectors are obtained. In our low-rank long matrix setting $(m\gg n)$, their sample complexity scales as $O(m)$. In contrast, our non-backtracking wedge operator is able to estimate singular values and left singular vectors of a long matrix with sample size $O(\sqrt{mn})$. This improvement in sample complexity to recover one-sided structure is useful in  tensor completion; see Section~\ref{sec:tensor_completion} for more details.

\paragraph{Overlap convergence and weak consistency} In full generality, the overlaps $\gamma_i$ in Theorem \ref{thm:eigenvectors} have a complicated expression. however, this can be  simplified under a homogeneity assumption, namely
\begin{equation} 
    D_x := \sum_{y \in V_1, z\in V_2} Q_{xz} Q_{yz} \notag 
\end{equation}
is independent of $x$. In this case, the Perron-Frobenius theorem  implies that $D_x = \rho^2$, and hence
\begin{equation}
    \langle  \zeta_i^R, \phi_i \rangle=\sqrt{1 - \tau_i^4} + o(1), \quad  \langle  \zeta_i^L, \phi_i \rangle = \sqrt{1 - \tau_i^4} + o(1). \notag
\end{equation}
This implies a \textit{weak recovery} of the singular vectors $\phi_i$. In general, as $d \to \infty$, we have $(\gamma_i -1)\sim C/d^2$, and hence \eqref{eq:overlap_gamma} implies the eigenvectors of $B$ are asymptotically aligned with the left singular vectors of $M$.  Under the additional assumption that all singular values of $M$  are distinct, the top $r$ eigenvectors of $B$ are asymptotically aligned with all left singular vectors of $M$, and eigenvalues are consistent estimators for $\nu_i, i\in [r]$: this is a form of \textit{weak consistency} for matrix completion.

\paragraph{Optimality of the bounds and thresholds} We have made no effort towards getting closed-form exponents in the polynomials $C_0$ and $n_0$, which would probably not be sharp. An explicit bound on $C_0, n_0$ in the proportional case for matrix completion is worked out in \cite[Theorem 5.4]{bordenave2020detection}.  Similarly, although we do expect (when $d, \eta$ are fixed) that the rate of convergence is indeed of the form $n^{-c}$, we didn't try to optimize the value of such $c$ as much as possible.

Our work shows a threshold behavior for long matrix completion: one can weakly recover the components in   $MM^\top$ whose signal-to-noise ratio is above the threshold. 
On the other hand, the optimality of $\theta$ is still an open and interesting question since if this threshold can be lowered, more information about $M$ can be recovered. The first threshold $\theta_1$ is inherent to the problem and can be found in many similar problems (e.g., the \textit{Kesten-Stigum threshold} in community detection \cite{krzakala.moore.ea_2013_spectral,bordenave2018nonbacktracking,mossel_reconstruction_2015,mossel.neeman.ea_2018_proof}):  it is related to an intrinsic property on the existence of a pseudo-eigenvector in a  Galton-Watson tree with Poisson offspring distribution of mean $d$, see Section \ref{sec:tree_pseudo} for details.  But whether $\theta_2$ is needed or is an artifact of proof is still unclear. We note that in a similar problem regarding the configuration model \cite{coste_2021_spectral}, such a seemingly spurious threshold turns out to be the sharp one (albeit only for one eigenvalue). Note that as $d$ grows, $\theta_2$ decays faster than $\theta_1$, therefore $\theta=\theta_1$ for sufficiently large $d$.

\paragraph{Recovering right singular vectors} We draw attention to the fact that our result is almost completely independent of the right singular vectors $\psi_i$ of $M$, both on the level of assumptions (except for their influence on $K$) and results. We believe this is not an artifact of proof: indeed, most of the right vertices (a proportion $1 - \alpha^{-1}$ of them) are isolated, and it is impossible to recover any signal from those. This is a completely different behavior compared with \cite{bordenave2020detection}, where the authors show that when $\alpha$ is bounded, both left and right singular vectors of $M$ can be recovered. However, it matches recent results from \cite{montanari.wu_2022_fundamental} in the framework of long matrix reconstruction showing different reconstruction thresholds for the left and right singular vectors. 
Our work also shows that one can effectively estimate the rank of a long matrix of size $n\times m$ with $O(\sqrt{mn})$ random samples when the rank $r=O(1)$. When $m,n$ are proportional, a similar result was obtained in \cite{saade2015matrix} based on the Bethe-Hessian matrix.

\paragraph{Parameter scaling} All of the results in Theorems \ref{thm:eigenvalues} and \ref{thm:eigenvectors} are non-asymptotic, and with more care in the proof, we would be able to give explicit expressions for $C_0, n_0$ (see, e.g., \cite{bordenave2020detection}). However, the given bounds are meaningful only when $\ell \gg 1$, and $C_0 \tau^{2\ell} \ll 1$. These can easily be seen to be satisfied whenever
\begin{equation}
   K, \kappa, r = n^{o(1)}, \quand \log(d)^2 = o(\log(n)). \notag 
\end{equation}

\subsection{Application in low-CP-rank tensor completion}\label{sec:tensor_completion}

Throughout this part, our goal will be to recover a  tensor $T$ of order $k\geq 3$, which has  a CP decomposition (see, e.g., \cite{kolda2009tensor}) with $r$ components:
\begin{equation}\label{eq:low_cp_rank}
    T = \sum_{i=1}^r \nu_i \,w_i^{(1)}\otimes w_i^{(2)}\otimes\cdots \otimes w_i^{(k)},
\end{equation}
where $\|w_i^{(j)}\|_2=1$ for all $i\in [r], j\in [k]$. The smallest integer $r$ for the existence of the decomposition \eqref{eq:low_cp_rank} is called the \textit{CP-rank} of $T$, and we will call it the rank of $T$ for simplicity in this section.

It is known that for a general low-rank tensor $T$, finding its CP-decomposition is NP-hard \cite{hillar2013most}. Even with the complete information of the ground truth $T$, identifying each component in  $\{w_{i}^{(j)}, i\in [r], j\in [k]\}$ is not feasible. Since we consider asymmetric tensor completion, it will be useful to consider a general version of unfolding:
\begin{definition}
    Let $\cS$, $\cT$ be a partition of $[k]$ in two subsets. The $(\cS, \cT)$-unfolding of $T$; denoted as $\mathrm{unfold}_{\cS, \cT}(T)$, is a matrix $M \in \dR^{n^\cS \times n^{\cT}}$ defined as follows: for any multi-index $(i_s)_{s\in [k]}$,
    \begin{equation}
        M_{(i_s)_{s\in \cS}, (i_s)_{s\in \cT}} = T_{(i_s)_{s\in [k]}}. \notag 
    \end{equation}
\end{definition}
In particular, when $T$ is as in \eqref{eq:low_cp_rank}, we have
\begin{equation}\label{eq:low_rank_unfolding}
    \mathrm{unfold}_{\cS, \cT}(T) = \sum_{i = 1}^r \nu_i \left( \bigotimes_{j\in \cS} w_j \right) \left( \bigotimes_{j\in \cT} w_j\right)^*.
\end{equation}
For simplicity, we define the mode-$i$ unfolding of $T$ as  $\mathrm{unfold}_{i}(T) := \mathrm{unfold}_{\{i\}, [k] \setminus \{i\}}(T)$.

However, we make an important remark: Equation \eqref{eq:low_rank_unfolding} is not necessarily the SVD of the tensor unfolding. Indeed, the CP decomposition does not always have orthogonal components, and hence, the vectors in \eqref{eq:low_rank_unfolding} are not necessarily orthogonal.
Generally, for a given tensor $T$, the spectra of the unfolded matrices are not the same for different unfoldings, and there is a hierarchy of unfolded matrix norms \cite{wang2017operator}. In the following, we will denote by $\tilde T$ an observed tensor from $T$ with sampling probability $p = \frac{d}{n^{k/2}}$. For an unfolding mode $i$, we let $M^{(i)} = \mathrm{unfold}_i(\tilde T)$, and $B^{(i)}$ be its associated non-backtracking wedge matrix.

\subsubsection{The rank-1 case} When $r = 1$,    we write 
\begin{align}\label{eq:rank-1-T}
    T=w^{(1)}\otimes \cdots \otimes w^{(k)},  
\end{align}
where we choose $\nu_1 = 1$ in \eqref{eq:low_cp_rank} for convenience. Then we have
\begin{align}
 \mathrm{unfold}_{i}(T) := \mathrm{unfold}_{\{i\}, [k] \setminus \{i\}}(T) =  w^{(i)} (w^{(1)} \otimes \cdots \otimes w^{(i-1)} \otimes w^{(i+1)} \otimes \cdots \otimes w^{(k)})^\top.  \notag 
\end{align}
We now write down the corresponding model parameters for $M^{(i)}$. It can be checked that they  are the same for all $i$, independent of the choice for unfolding, except for $\kappa$:
\begin{align}
    \theta_1 &= \sqrt{\frac{n^{k/2} \prod_{j=1}^k \norm*{w^{(j)}}_4^2}d}, &\theta_2 &= \frac{n^{k/2}\prod_{j=1}^k \norm*{w^{(j)}}_\infty}d \label{eq:tensor_theta}\\
      K &= n^{k/2} \prod_{j=1}^k \frac{\norm{w^{(j)}}_{\infty}^2}{\norm{w^{(j)}}_4^2}, 
     &\kappa^{(i)} &=\sqrt{n}  \|w^{(i)}\|_{\infty} \leq\sqrt{n}  \max_i \|w^{(i)}\|_{\infty} \eqqcolon \kappa_{\mathrm{max}}.\notag 
\end{align}
Note that we always have $\theta_1^2\leq \theta_2$. Applying Theorems \ref{thm:eigenvalues} and \ref{thm:eigenvectors} to $\mathrm{unfold}_{i}(T)$, we obtain the following corollary:
\begin{corollary}[Rank-1 tensor completion] \label{cor:rank-1} Let $k\geq 3$, and define $\ell$ as
    \begin{equation}
        \ell = \left\lfloor c_1 \log_d(n) \right\rfloor, \quad c_1=\frac{(k/2-1)\wedge 1}{25}. \notag 
    \end{equation} For any constant $\eps>0$,
    assume that
    \begin{equation}\label{eq:rank1_d_condition}
        d > (1+\eps) n^{k/2}  \prod_{j=1}^k \norm*{w^{(j)}}_{\infty}.
    \end{equation}
     Then there exists $C_0, n_0\geq 1$ that depend polynomially on $ K,\kappa_{\mathrm{max}}, d, \log n$ such that for any $i \in [k]$, $n\geq n_0$ and $C_0 \theta^{2\ell}\leq 1$, the following holds on an event with probability at least $1 - cd^4n^{-1/4}$: 
    \begin{enumerate}
        \item The top eigenvalue $\lambda^{(i)}$ of  $B^{(i)}$   satisfies \begin{equation}
        |\lambda^{(i)} - 1| \leq C_0\theta^{2\ell},  \notag 
    \end{equation}
    where $\theta = \max(\theta_1, \theta_2)$ with $\theta_1, \theta_2$ in \eqref{eq:tensor_theta}.
  \item All other eigenvalues of $B^{(i)}$ have modulus at most $C_0^{1/\ell} \theta^2$.
 \item There exist associated left and right eigenvectors $\xi^{(i), L}, \xi^{(i), R}$ of $B^{(i)}$ such  that 
    \begin{equation}
        \left| \langle   \zeta^{(i), L}, w^{(i)} \rangle - \sqrt{1 - \theta^2} \right| \leq  C_0 \theta^{2\ell} \quand \left| \langle    \zeta^{(i), R}, w^{(i)} \rangle - \sqrt{1 - \theta^2} \right| \leq  C_0 \theta^{2\ell},
    \end{equation}
    where $\zeta^{(i), L},\zeta^{(i), R}$ are defined in \eqref{eq:reduced_eigenvec}.
    \end{enumerate}
\end{corollary}

Hence, successively applying tensor unfolding $k$ times to the observed tensor is enough to estimate all of the components of $T$. 
When all components $w_1,\dots, w_k$ are vectors in $\frac{1}{\sqrt n} \{ \pm 1\}^n$, with $(1+\eps)n^{k/2}$ samples,  we have 
\[ \langle \zeta^{(i),L}, w^{(i)}\rangle =\frac{\sqrt{(1+\eps)^2-1}}{1+\eps}+O(n^{-c}).\]

\subsubsection{Low-rank orthogonal components estimation}

Our long matrix completion results aim to estimate individual left singular vectors above a Kesten-Stigum-type threshold. However, in the tensor case, without orthogonal assumption on the vectors $[w_{i}^{(1)}, \dots, w_{i}^{(k)}]$, such a reduction to the estimation of left singular vectors in long matrix completion is impossible. 
Therefore, we consider the same setting as in \cite{jain2014provable,potechin2017exact}, and assume that $T$ can be written as
\begin{equation}\label{eq:orthogonal_condition}
    T = \sum_{j=1}^r \nu_j w_j^{(1)}\otimes w_j^{(2)}\otimes\cdots \otimes w_j^{(k)},
\end{equation}
where for each $i\in [k]$,
$\left\{ w_{1}^{(i)}, \dots, w_{r}^{(i)}\right\}$ forms an orthonormal basis, and $\nu_1\geq \nu_2\geq \cdots \geq \nu_{r}>0$. In contrast with the rank-one case, we don't have access to an orthogonal CP-decomposition of $T \circ T$, and hence most parameters used in Theorem \ref{thm:eigenvalues} depend on the unfolding mode $i$; we shall denote them by $\rho^{(i)}, Q^{(i)}$, and so on. We can still provide the following estimates of $\rho^{(i)}$ and $L^{(i)}$:
\begin{equation}
    n^{k/2}\norm{T}_2 \leq \rho^{(i)} \leq n^{k/2}\norm{T}_F, \quand L^{(i)} = L \coloneqq n^{k/2} \norm{T}_\infty. \notag 
\end{equation}

\begin{corollary}[Low-rank orthogonal tensor completion]\label{cor:orthogonal}
    Let $T$ be a $k$-tensor with an orthogonal decomposition \eqref{eq:orthogonal_condition} and $k\geq 3$, and define $\ell$ as
    \begin{equation}
        \ell = \left\lfloor c_1 \log_d(n) \right\rfloor, \quad  c_1=\frac{(k/2-1)\wedge 1}{25}. \notag 
    \end{equation}
    For any $i \in [k]$, there exists  $C_0^{(i)}, n_0^{(i)}$, that depend  polynomially on $K^{(i)}, \kappa^{(i)}, r^{(i)}, d^{(i)}, \log(n)$, such that, if $n \geq n_0$ and $C_0^{(i)} \left(\tau_j^{(i)}\right)^{2\ell} \leq 1$, the following holds    with probability at least $1 - cd^4n^{-1/4}$: 
 \begin{enumerate}
        \item There exists an ordering $\lambda_1^{(i)}, \dots, \lambda_{r_0}^{(i)}$ of the top $r_0$ eigenvalues of $B^{(i)}$ such that for any $j \in [r_0^{(i)}]$,
    \begin{equation}
        |\lambda_j - \nu_{j}^2| \leq C_0^{(i)} \nu_{j}^2\left(\tau_j^{(i)}\right)^{2\ell}. \notag 
    \end{equation}
    \item All other eigenvalues of $B^{(i)}$ have modulus at most $\left(C_0^{(i)}\right)^{1/\ell}(\theta^{(i)})^2$.
    \item For $j\in [r_0^{(i)}]$, there exist associated left and right eigenvectors $\xi_j^{ L}, \xi_j^{ R}$ of $B^{(i)}$  such that
    \begin{align*}
        \left| \langle \zeta_j^{(i),R}, w_{j}^{(i)} \rangle - \frac{1}{\sqrt{\gamma_j^{(i)}}} \right| &\leq \frac{ C_0^{(i)} \left(\tau_j^{(i)}\right)^{2\ell}}{\delta_{j, \ell}}, \quad 
        \left| \langle \zeta_j^{(i),L}, w_{j}^{(i)} \rangle - \frac{1}{\sqrt{\gamma_j^{(i)}}} \right|  \leq \frac{C_0^{(i)} \left(\tau_j^{(i)}\right)^{2\ell}}{\delta_{j, \ell}},
    \end{align*}
    where $\gamma_j$ is defined as 
    \begin{equation}
        \gamma_j^{(i)} = \left\langle \ind, \left(I - \frac{Q^{(i)} Q^{(i)*}}{\nu_j^4 d^2} \right)^{-1} \left( w_{j}^{(i)} \circ w_{j}^{(i)} \right) \right\rangle, \notag 
    \end{equation}
   $\delta_{j,\ell}$ is defined in \eqref{eq:delta}, and  $\zeta_j^{(i), R}, \zeta_j^{(i), L}$ are defined in \eqref{eq:reduced_eigenvec}.
    \end{enumerate}
\end{corollary}
We obtain a weak recovery of an orthogonal tensor decomposition with sample size $O(n^{k/2})$, which removes the $\mathrm{polylog}(n)$ factors in \cite{jain2014provable,potechin2017exact}. Same as in the discussion for long matrix completion in Section \ref{sec:discussion}, under the additional assumption that $\{\nu_i, i\in [r]\}$ are separated, with sample size $\omega(n^{k/2})$, weak consistency for  $T$ can be achieved:
\begin{theorem}\label{thm:frobenius_bound}
    Let $T$ have an orthogonal decomposition as in \eqref{eq:orthogonal_condition}, such that $\delta_{i, \ell} \geq c$ for some constant $c> 0$. For $i \in [k], j \in [r]$, define the estimator
    \begin{equation}
        \hat T = \sum_{j=1}^r \hat\nu_j \,\hat w_j^{(1)}\otimes\cdots \otimes \hat w_j^{(k)}\quad \text{for} \quad \hat w_j^{(i)} = \zeta_j^{(i), R} \quand \hat \nu_j = \sqrt{\lambda_i^{(1)}}. \notag 
    \end{equation}
    Then, with probability at least $1 - cd^4n^{-1/4}$, for large enough $d$, 
    \begin{equation}
        \frac{\norm{T - \hat T}_F}{\norm{T}_F} \leq \frac{\sqrt{k r} K\tau}{d} + C_1\left( \frac\tau d \right)^{\ell-1},  \quad      \text{where} \quad  \tau = \nu_r^{-2} \max_i \rho^{(i)}.\notag 
    \end{equation}

\end{theorem}
Theorem \ref{thm:frobenius_bound} is proved in Appendix \ref{sec:app:frobenius}. To the best of our knowledge, this is the first result that shows weak recovery (in the sense $\norm{T - \hat T}_F = o(1)$) in the regime $d \to \infty$, as opposed to $d \gtrsim \log(n)^c$.

\subsubsection{Discussion on tensor completion}

\paragraph{Comparison to nearly square unfolding} Corollaries \ref{cor:rank-1} and \ref{cor:orthogonal} provide an efficient algorithm that outputs an estimate for each component $w^{(i)}$. This directly implies that one can have a correlated estimation of $T$ with sample size $O(n^{k/2})$. This improves the sample complexity $O(n^{\lceil k/2 \rceil })$ when directly applying the matrix completion algorithm in \cite{bordenave2020detection} to $\mathrm{unfold}_{\lfloor k/2\rfloor, \lceil k/2\rceil}(T)$ when $k$ is odd. 
This mode-$i$ unfolding approach is more computationally efficient than the nearly square unfolding \cite{montanari.sun_2018_spectral} for component-wise estimation.  For example, in the rank-1 case, applying the matrix completion algorithm to  $\mathrm{unfold}_{\lfloor k/2\rfloor, \lceil k/2\rceil}(T)$, one can find a correlated estimation of $w^{(1)}\otimes \cdots \otimes  w^{(\lfloor k/2 \rfloor)}$. However, a further recursive unfolding is needed to recover individual components.  The advantage of single-mode unfolding was also discussed in \cite{arous2021long} for the spiked tensor model.

\paragraph{Optimal sample complexity for rank-one completion} We note that although our method does reach the $O(n^{k/2})$ scaling for rank-one tensor completion, this is not the optimal sample complexity. Indeed, for a boolean rank-one tensor, completion is equivalent to $k$-XORSAT, and hence \cite{creignou_approximating_2003} implies the following theorem:
\begin{theorem}\label{thm:rank_one_optimal}
    Assume that $T = x^{\otimes k}$ for a given vector $x \in \dR^n$ such that $\norm{x}_2 = 1$. Then there exists a constant $c_k < 1$ such that if $p \geq c_k n^{-(k-1)}$, there exists an algorithm that outputs an estimator $\hat x$ for $x$ satisfying
    \begin{equation}
        \frac{\langle \hat x, x \rangle}{\norm{\hat x}_2} \geq \frac1{\sqrt{n}\norm{x}_\infty}. \notag 
    \end{equation}
\end{theorem}
Hence, for the specific case of rank-one tensors with bounded entries, a near-optimal algorithm can reach a sample complexity of $O(n)$. However, the mapping to a XORSAT problem disappears for tensors of rank $ r > 1$, as well as with the addition of any type of noise (see, e.g., \cite{barak.moitra_2016_noisy}). Therefore, the conjectured $n^{k/2}$ threshold in \cite{barak.moitra_2016_noisy} for general low-rank tensor completion does not apply to this specific noiseless example. The proof of Theorem \ref{thm:rank_one_optimal} is given in Appendix \ref{sec:appendix_A}.

\paragraph{Subspace recovery for general low-rank tensors}
Without the orthogonal decomposition assumption of the tensor $T$, one cannot directly associate the singular vectors of the unfolded matrix to the components of $T$. In this general case, the best one can hope for is to recover the subspace spanned by the spikes from $T$.  Although our result is of independent interest in long matrix completion, it can also be used directly in non-orthogonal tensor completion. First, when $d \to \infty$, the results of \cite{montanari.sun_2018_spectral} imply that a good reconstruction of the left singular space of $M$ translates directly to weak consistency for tensor reconstruction. On the other hand, several algorithms such as the ones in \cite{montanari.sun_2018_spectral, liu2020tensor, cai2021nonconvex} make use of a first spectral estimate of the CP factors of the tensor $T$ before a refined optimization step; a popular choice for such an estimate is usually a truncated SVD of $A$. We conjecture that even for fixed $d$ when the SVD of $A$ contains no information about the one of $M$, the embedded eigenvectors of $B$ can be used to perform an informed initialization of those algorithms.

\subsection{Application in low-multilinear-rank tensor completion}\label{sec:tucker}
Besides CP-decomposition, another popular choice for tensor decomposition is the Tucker decomposition \cite{tucker1966some}, which is also called higher-order singular value decomposition (HOSVD) \cite{de2000multilinear}. For an order $k$-tensor $T$, we write its HOSVD as
\begin{align}\label{eq:HOSVD}
T=\sum_{q_1=1}^{r_1}\sum_{q_2=1}^{r_2}\cdots \sum_{q_k=1}^{r_k}s_{q_1,\dots,q_k} \phi_{q_1}^{(1)}\otimes \cdots \otimes \phi_{q_k}^{(k)},
\end{align}
where $(r_1,\dots,r_k)$ is called the \textit{multilinear rank} of $T$, $U_j=[\phi_1^{(j)},\dots, \phi_{r_j}^{(j)}]\in \mathbb R^{n\times r_j}$ is a matrix with orthonormal columns,   and $S\in \mathbb R^{r_1\times \cdots\times r_k}$ is the core tensor. One can show that the columns of $U^{(j)}$ are the left singular vectors of the mode-$j$ unfolding of $T$ denoted by $\mathrm{unfold}_j(T)\in \mathbb R^{n\times n^{k-1}}$. See \cite{kolda2009tensor} for more details.  
Tensor completion with low-multilinear-rank was studied in \cite{kressner2014low,yuan2017incoherent, montanari.sun_2018_spectral}.  Recovering an HOSVD  under additive noise was considered in \cite{lebeau2024random}.

We can apply the result for long matrix completion in Section~\ref{sec:main_matrix} and  to $\mathrm{unfold}_j(T)$  to recover the mode-$j$ components of the HOSVD.  We denote the SVD of $\mathrm{unfold}_j(T)$  as
\begin{align}
\mathrm{unfold}_j(T)=\sum_{i=1}^{r_j}\nu_i^{(j)}\phi_i^{(j)}\psi_i^{(j)}. \notag 
\end{align}
Let $B^{(j)}$ be the non-backtracking wedge operator for the observed matrix of $\mathrm{unfold}_j(T)$ after sampling each entry with probability $p=\frac{d}{n^{k/2}}$. The following corollary follows directly from Theorem~\ref{thm:eigenvectors}.
\begin{corollary}[Low-multilinear-rank tensor completion]\label{cor:tucker}
Let $T$ be a tensor  defined in \eqref{eq:HOSVD}.  With the same notations as in Section~\ref{sec:parameters} and Theorem~\ref{thm:eigenvectors} applied to $\mathrm{unfold}_j(T)$ and $B^{(j)}$, there exist constants $C_0,n_0\geq 1 $ that depend polynomially on $r, K,\kappa, d, \log n$  such that, if $n \geq n_0$ and $C_0 \tau^{2\ell} \leq 1$, the following holds on an event with probability at least $1 - cd^4n^{-1/4}$:  there exists a unit left singular vector ${\phi_i'}^{(j)}$ of $\mathrm{unfold}_j(T)$ associated with $\nu_i^{(j)}$ such that
    \begin{equation}
        \left| \langle  \zeta_i^R, \phi_i'^{(j)} \rangle - \frac{1}{\sqrt{\gamma_i}} \right| \leq \frac{ C_0 \tau_i^{2\ell}}{\delta_{i, \ell}} \quand \left| \langle  \zeta_i^L, {\phi_i'} ^{(j)}\rangle - \frac{1}{\sqrt{\gamma_i}} \right| \leq \frac{C_0  \tau_i^{2\ell}}{\delta_{i, \ell}}. \notag 
    \end{equation} 
\end{corollary}
By applying Corollary~\ref{cor:tucker} to different unfoldings, we can achieve weak recovery for the orthonormal components $U^{(j)}, 1\leq j\leq k$,  with $O(n^{k/2})$ many samples. Our method, however, does not provide estimations for the core tensor $S$, the same as the work on low-multilinear-rank tensor approximation \cite{lebeau2024random}.

\section*{Organization of the appendix} 
Numerical simulations are provided in Appendix~\ref{sec:simulation}.
Appendix~\ref{sec: structure} is devoted to stating the main technical ingredient of Theorems \ref{thm:eigenvalues} and \ref{thm:eigenvectors}. It consists of a detailed spectral structure of the matrix $B$ (or, more precisely, of its power $B^\ell$), with precise expressions given for the pseudo-eigenvectors of $B$ and their relationships with each other. We also provide a sketch of the proof of how this technical result implies our main theorems. 
  The rest of the sections are then devoted to proving Theorem \ref{thm:spectral_structure_Bl}. In short:
 \begin{enumerate}
 \item  In Appendix~\ref{sec:inequality}, we collect a few simple inequalities that will be useful in the proof.
     \item in Appendix \ref{sec:local_study}, we perform a local study of the neighborhoods of the weighted bipartite random graph $G$; we show a local convergence (in a stronger sense than the one of Benjamini and Schramm \cite{benjamini.schramm_2011_recurrence}) of $G$ to an appropriately defined  Galton-Watson tree $T$. The structure of this tree is tailored to our bipartite setting and is similar to the one of \cite{florescu2016spectral}. In short, the tree $T$ encodes the ``local” properties of $G$, in the sense that most neighborhoods of $G$ have the same distribution as the tree $T$.
     
     \item in Appendix \ref{sec:tree_pseudo}, we study this tree in-depth to extract information about specific processes on $T$. Those processes give rise to the pseudo-eigenvectors of $B$. The main ingredient of this Appendix is a reduction to the setting of \cite{stephan2020non}, which approximates the bipartite graph $G$ with a randomly weighted (but non-bipartite) version.
     
     \item Appendices~\ref{sec:bulk_radius} and ~\ref{sec: trace_method} are devoted to bounding the bulk eigenvalues of $G$. The method used -- a so-called \emph{tangle-free} decomposition and a variant of the trace method \cite{furedi1981eigenvalues} -- is similar to the one of \cite{bordenave2018nonbacktracking, bordenave2020detection, stephan2020non, stephan2022sparse}. One of the major technical novelty is in the trace method proof of Theorem \ref{thm:eigenvalues}\ref{thm:radius_bound}. The  $B$ matrix counts non-backtracking wedges, but the appearance of different wedges in a random graph $G$ is dependent. It is a big challenge to estimate the high-power trace of a random matrix with dependent entries compared to the standard moment method proof techniques in the random matrix theory literature, and such a technical challenge does not appear in \cite{bordenave2018nonbacktracking, bordenave2020detection, stephan2020non}.  Additional proofs based on the trace method are included in Appendix~\ref{sec:additional_trace}.
 \end{enumerate}
 Finally, we include the proof of Theorem~\ref{thm:frobenius_bound} in Appendix~\ref{sec:app:frobenius}, and the proof of Theorem~\ref{thm:rank_one_optimal} in Appendix~\ref{sec:appendix_A}.

\subsection*{Acknowledgments}Y.Z. is partially supported by NSF-Simons Research Collaborations on the Mathematical and Scientific
Foundations of Deep Learning and an AMS-Simons Travel Grant.

\bibliographystyle{plain}
\bibliography{db}

\appendix


\section{Numerical experiments} \label{sec:simulation}

To illustrate our results, we considered a rectangular matrix $M$ of size $n \times m$, of the form
\begin{equation}\label{eq:example_M}
    M = \frac{1}{\sqrt{mn}} \left(\nu_1 \, \ind_n \ind_m^\top + \nu_2 \, u v^\top \right)
\end{equation} 
where $u$ and $v$ are vectors with entries in $\{-1, 1\}$ orthogonal to $\ind_n$ and $\ind_m$, respectively. In this setting, the thresholds $\theta_1$ and $\theta_2$ are 
$\theta_1 = \sqrt{\frac{\nu_1^2 + \nu_2^2}{d}}$ and  $\theta_2 = \frac{\nu_1 + \nu_2}d$.
For all our experiments, we fixed $d = 3, \nu_1 = 1, \nu_2 = 0.9$, and $m = n^2$ to mimic the unfolding of a tensor of order 3.

The left part of Figure \ref{fig:spectrum_nb} shows the spectrum of the non-backtracking wedge matrix $B$ constructed as in \eqref{eq:defB}, with $n = 4000$. The structure shown in Theorem \ref{thm:eigenvalues} is easily apparent: all but two of the eigenvalues are confined in a circle of radius close to $\theta^2$, with two real \emph{outlier} eigenvalues around $\nu_1^2$ and $\nu_2^2$, respectively. In the right part, we plot the reduced eigenvector $\zeta_2^L$ associated with $\lambda_2$ for $n = 30000$ against the theoretical vector $u$. We can check that $\frac{\langle \sign(\zeta_2^L), u \rangle}n \approx 0.845$,
which corresponds to recovering 92.3\% of the entries of $u$. On the other hand, Figure \ref{fig:baseline} shows the same plots for the matrix $Z=AA^T-\mathrm{diag}(AA^T)$, widely used in tensor completion works \cite{montanari.sun_2018_spectral, cai2021nonconvex}. The spectrum of $Z$ is less well-behaved than the one of $B$, and in particular, there is no visible outlier. This is confirmed by plotting the second eigenvector $\xi_2$ of $Z$ against $u$: the former is very localized, with no visible correlation with $u$. This time, the sign correlation between $\xi_2$ and $u$ is 
$\frac{\langle \sign(\xi_2), u \rangle}n \approx 0.01 \approx \frac{1.65}{\sqrt{n}}$,
which is indistinguishable from the variance of a random guess.
\begin{figure}
    \centering
    \includegraphics[width=0.49\textwidth]{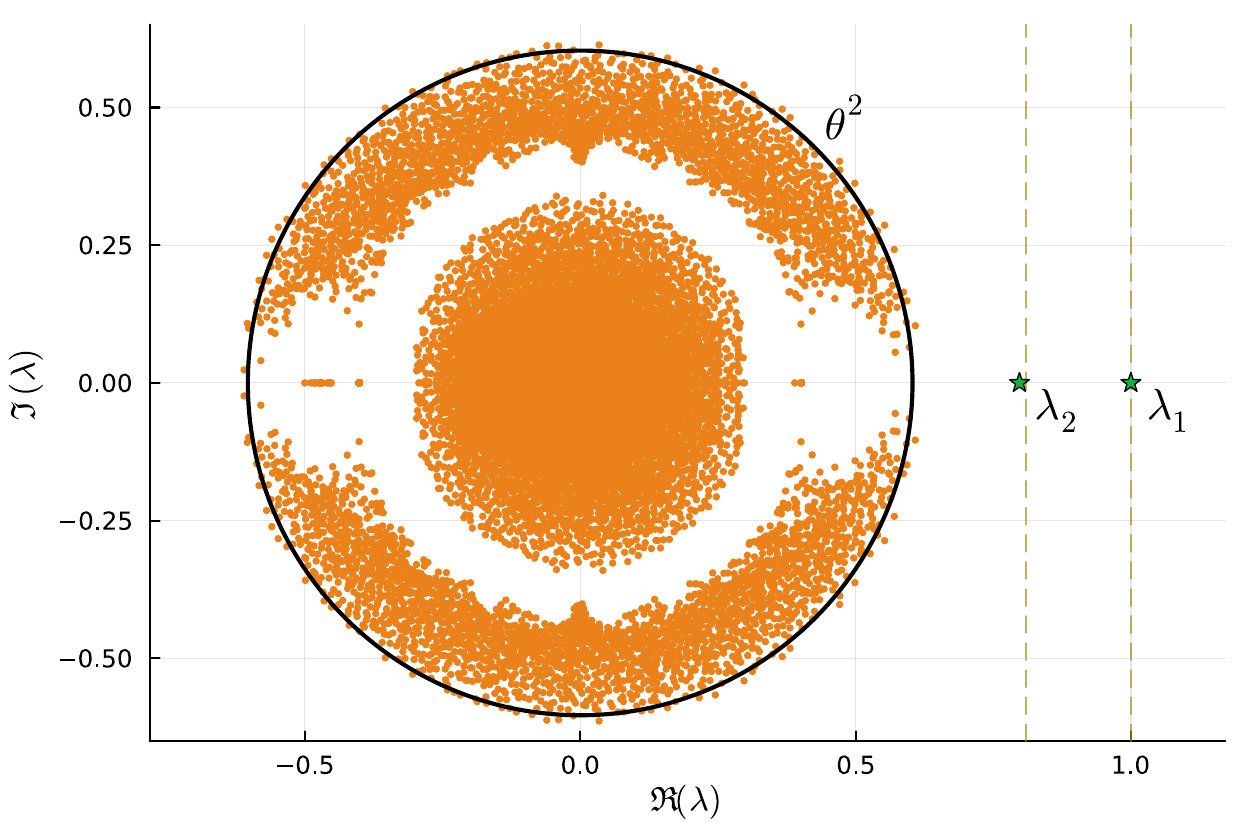}
        \includegraphics[width=0.49\textwidth]{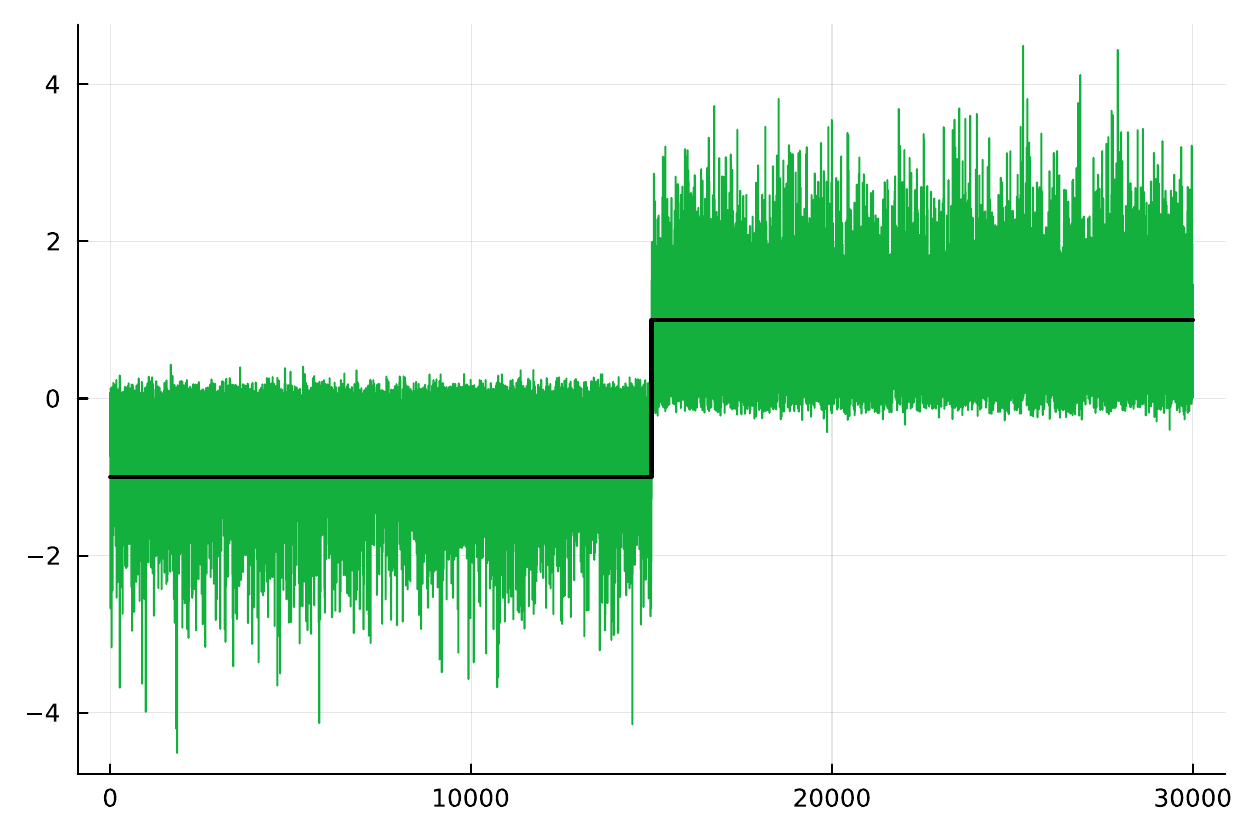}
    \caption{\textbf{Left:} Spectrum of the non-backtracking wedge matrix of the matrix $M$ in \eqref{eq:example_M}. The bulk eigenvalues are in orange, while the outliers $\lambda_1$ and $\lambda_2$ are in green. The circle of radius $\theta^2$ and the theoretical locations at $\nu_1^2$ and $\nu_2^2$ are plotted in solid black and dashed light purple, respectively.
    \textbf{Right:} Reduced second eigenvector $\zeta_2^L$ of $B$ (see \eqref{eq:reduced_eigenvec} for the embedding), in green. The theoretical singular vector $u$ is plotted in black. Both vectors are rescaled to have norm $\sqrt{n}$.}
    \label{fig:spectrum_nb}
\end{figure}

\begin{figure}
    \centering
    \includegraphics[width=0.49\textwidth]{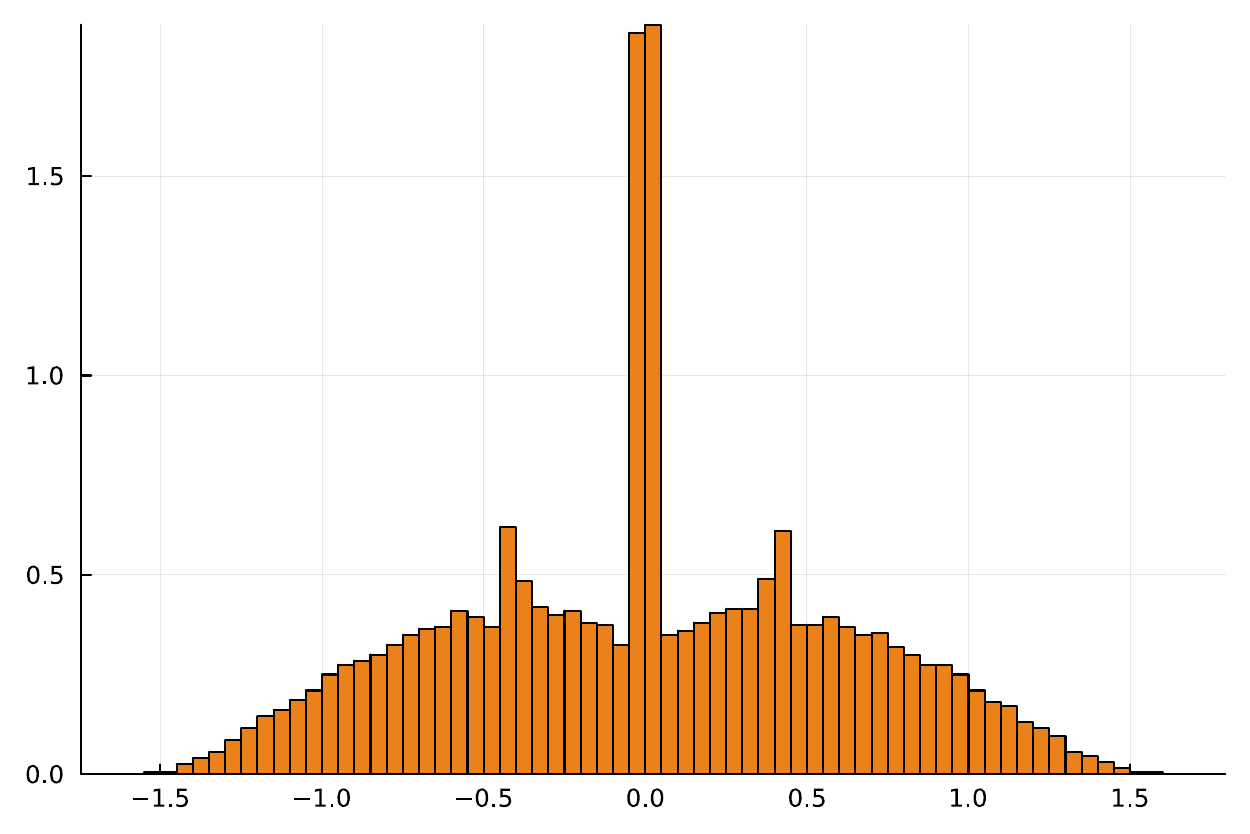}
    \includegraphics[width=0.49\textwidth]{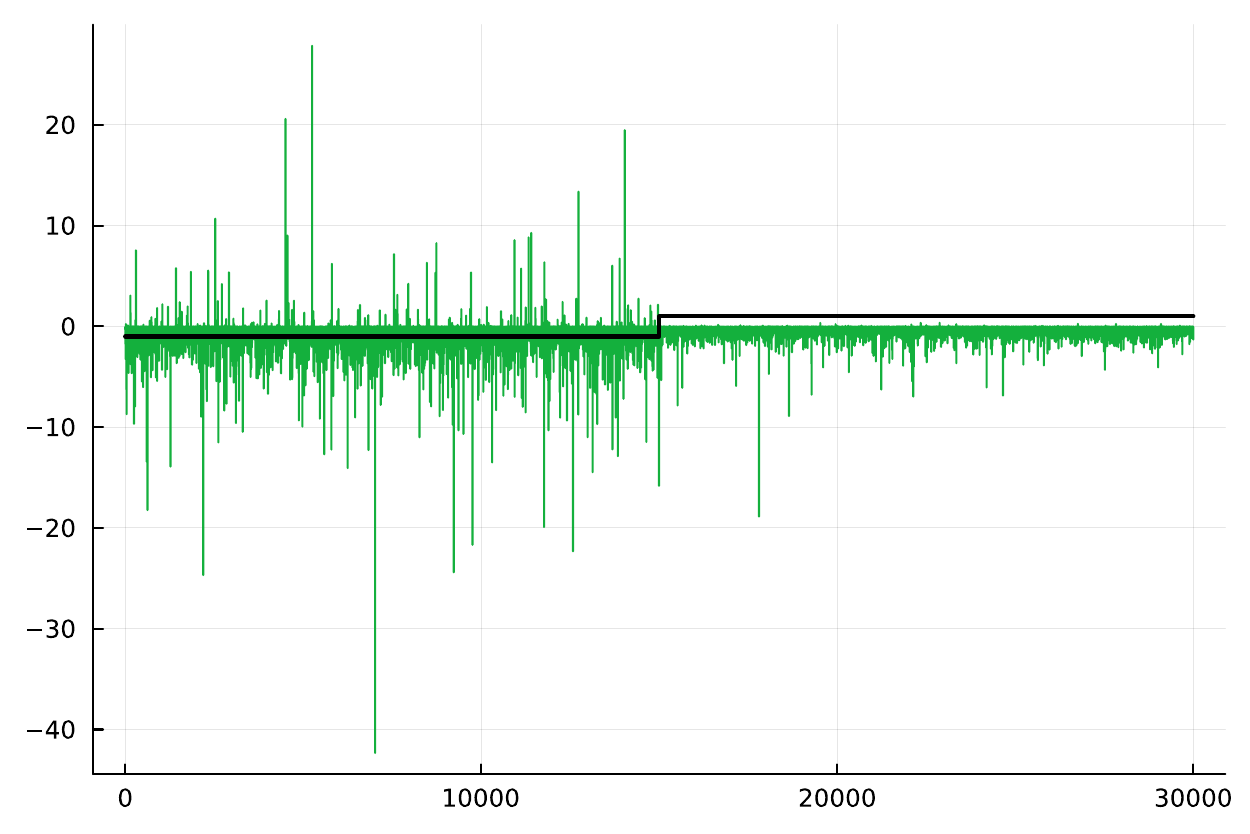}
    \caption{Spectrum density (left) and second eigenvector (right) of the matrix $Z=AA^T-\mathrm{diag}(AA^T)$. The second eigenvector of $Z$ is normalized and plotted against the vector $u$ as in Figure \ref{fig:spectrum_nb}.}
    \label{fig:baseline}
\end{figure}

\section{Spectral structure of the matrix $B$}\label{sec: structure}

In this section, we state the main intermediary result used in our paper, which characterizes almost fully the structure of the top eigenvectors of $B$. Theorems \ref{thm:eigenvalues} and \ref{thm:eigenvectors} then stem from a perturbation analysis already done in \cite{stephan2020non, bordenave2020detection, stephan2022sparse}, which we will sketch here.

\subsection{Preliminaries}
We begin with a few important definitions and properties. Define the \emph{start} and \emph{terminal} matrices $S \in \dR^{V_1 \times \vec E}$ and $T \in \dR^{\vec{E} \times V_1}$ by
\begin{equation}
    S_{xe} = \ind_{e_1 = x} \qquand T_{ex} = \ind_{e_3 = x}. \notag 
\end{equation}
This implies that for any vector $\phi \in \dR^{V_1}$, $[S^*\phi](e) = \phi_{e_1}$ and $[T\phi](e) = \phi(e_3)$. We shall also need the so-called \emph{edge inversion operator} $J$ and a diagonal weight operator $\Delta$, defined as
\begin{equation}
    J_{e, f} = \mathbf{1} \{ f=e^{-1}\} \qquand \Delta_{e, e} = A_{e_1 e_2} A_{e_3 e_2}, \notag 
\end{equation}
where $e^{-1} = (e_3, e_2, e_1)$. As a shortcut, we will denote 
\begin{align}\label{eq:matrix_relation2}
  S_\Delta = S \Delta, \quad T_\Delta = \Delta T,\quad  J_\Delta = \Delta J = J \Delta  
\end{align}
the weighted versions of $S, T, J$ respectively; and for any vector $\chi \in \dR^{\vec{E}}$, we let 
\begin{align}\label{def:check}
  \check \chi = J_\Delta \chi.  
\end{align} Those matrices are related by the following identities:
\begin{align}\label{eq:matrix_relations}
    S &= T^*J, & T &= S^*J, \\
    S_\Delta T &= S T_\Delta = AA^*, & B &= TS_\Delta - J_\Delta.
\end{align}
In particular, those identities imply the following property, named the \emph{parity-time symmetry} in \cite{bordenave2018nonbacktracking}:
\begin{equation}\label{eq:parity_time}
    J_\Delta B = B^* J_\Delta.
\end{equation}
Since we aim to recover the left singular vectors of $M$, it will be useful to consider the ``folded'' equivalents of $M$ and $Q$, namely \emph{signal} and \emph{variance} matrices:
\begin{align}\label{eq:defPQ}
P = MM^* =  \sum_{i=1}^r \nu_i^2 \phi_i \phi_i^* \quand \Phi = QQ^*
\end{align}
Finally, we define the oriented equivalents 
\begin{align}\label{eq:def_chi}
 \chi_i = T \phi_i   
\end{align} of the left singular vectors of $M$.

\subsection{Main technical results}

We are now in a position to state our main technical result. Let
\begin{equation}
    \ell = \left\lfloor \eps \log_d(n)\right\rfloor \quad \text{with} \quad \eps= \frac{(\eta/2) \wedge 1}{25}. \notag 
\end{equation}
The candidate right (resp. left) eigenvectors of $B^\ell$ are the following: for $i \in [r_0]$,
\begin{equation}\label{eq:def_u_uhat}
    u_i = \frac{B^\ell \chi_i}{\nu_i^{2\ell}} \quand \hat u_i = \frac{(B^*)^\ell \check \chi_i}{\nu_i^{2\ell+2}}
\end{equation}
with associated (pseudo)-eigenvalue $\nu_i^{2\ell}$. We shall collect those eigenvalues and eigenvectors in the matrices $ \Sigma^\ell=\diag(\nu_i^{2\ell}) \in \dR^{r_0 \times r_0}, U, \hat U \in \dR^{\vec{E} \times r_0}$. Finally, we define for all $t \geq 0$ the matrix $\Gamma^{(t)} \in \dR^{r_0 \times r_0}$ as
\begin{equation}\label{eq:def_Gamma}
    \Gamma_{ij}^{(t)} = \sum_{s = 0}^t \frac{\langle \ind, \Phi^s (\phi_i \circ \phi_j) \rangle}{(\nu_i \nu_j d)^{2s}}.
\end{equation}
Then our results are as follows:  
\begin{theorem}\label{thm:spectral_structure_Bl}
    There exists an absolute constants $c, C_0 > 0$ such that for $n\geq C_0K^{12}$, with probability at least $1 - cd^4 n^{-1/4}$, the following inequalities hold:
    \begin{align}
        \norm*{U^*U - d^2 \Gamma^{(\ell)}} &\leq C_1 n^{-\eps/2}\tau^{4\ell}, & \norm*{\hat{U}^*\hat{U} - \left(\Gamma^{(\ell+1)} - I_{r_0}\right)} &\leq C_1 n^{-\eps/2}\tau^{4\ell} \label{eq:UU_VV} ,  \\
        \norm*{U^* \hat{U} - I_{r_0}} &\leq C_1 n^{-\eps/2}\tau^{4\ell} ,
        & \norm*{\hat{U}^*B^\ell U -\Sigma^{\ell} } &\leq C_1n^{-\eps/2} \tau^{4\ell} \theta^{2\ell} \label{eq:UV_VBU} ,  \\
        \norm*{B^\ell P_{\im(\hat U)^\bot}} &\leq C_2 \theta^{2\ell}, &  \norm*{P_{\im(U)^\bot} B^\ell } &\leq C_2 \theta^{2\ell} \label{eq:BPU_PVB},
    \end{align}
    with $C_1 = c r \kappa^2 d^6\log(n)^{7/2}$ and $C_2 = cr\kappa^2 d^6 K^{20}\log^{14}(n)$, where $P_{\im(U)^\bot}$ is the projection operator onto the orthogonal complement of the vector space spanned by column vectors of $U$.
\end{theorem}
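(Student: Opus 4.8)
The plan is to establish the six estimates in \eqref{eq:UU_VV}--\eqref{eq:BPU_PVB} by reducing the action of $B^\ell$ on the relevant vectors to a computation on the Galton-Watson tree $T$, using the local convergence of $G$ to $T$ established in Section~\ref{sec:local_study}. The key observation is that $B^\ell \chi_i$ and $(B^*)^\ell \check\chi_i$ are local functionals: for an oriented two-path $e$, the value $[B^\ell \chi_i](e)$ depends only on the $2\ell$-neighborhood of $e_3$ in $G$, so on the tangle-free event (which holds with high probability by the standard argument of \cite{bordenave2018nonbacktracking, stephan2020non}) these neighborhoods are trees and the values are given by sums over non-backtracking walks weighted by entries of $A$. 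First I would compute the \emph{first moment}: on the tree $T$, the expected value of the relevant walk sum along a depth-$\ell$ branch is governed by iterated application of the operator $\Phi/( \nu_i^2 d)^2 = QQ^*/(\nu_i^2 d)^2$, which is exactly why $\Gamma^{(\ell)}$ and $\Gamma^{(\ell+1)}$ appear; the geometric-series structure in \eqref{eq:def_Gamma} comes from summing over the depth $s$ at which a walk first ``uses'' signal edges versus noise edges. The identities \eqref{eq:matrix_relations}, especially $S_\Delta T = AA^* = P + (\text{noise})$ and $B = TS_\Delta - J_\Delta$, let me expand $B^\ell$ into a telescoping product and isolate the signal term $\nu_i^{2\ell}\chi_i$ plus a remainder.

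Second, I would control the \emph{fluctuations} of these local functionals around their tree expectations. This is where the reduction to \cite{stephan2020non} in Section~\ref{sec:pseudo_eigenvectors} does the heavy lifting: by approximating the bipartite graph $G$ with a randomly-weighted non-bipartite model, the pseudo-eigenvector processes $u_i, \hat u_i$ become instances of the processes analyzed there, and the concentration of $U^*U$, $\hat U^*\hat U$, $U^*\hat U$ around their means follows from second-moment bounds on these tree processes, yielding the $n^{-\eps/2}\tau^{4\ell}$ error (the $\tau^{4\ell}$ factor being the natural size of $\|U\|^2$ relative to the bulk scale, and $n^{-\eps/2}$ coming from the ratio of $\ell$ to the graph's girth-type parameter $\log_d n$). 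The $\log(n)^{7/2}$ and $\kappa^2 r$ factors in $C_1$ track the number of singular directions, the delocalization needed to bound $\ell^\infty$ norms of the $\phi_i$, and the polylog losses from union bounds over the tree levels.

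Third, the bulk bounds \eqref{eq:BPU_PVB} require showing that on the orthogonal complement of the embedded eigenvectors, $B^\ell$ has operator norm $O(\theta^{2\ell})$. This is the trace-method part (last two sections of the paper): one bounds $\E \tr\big((B^\ell (B^*)^\ell)^k\big)$ after projecting away the signal, via a tangle-free path expansion. The main obstacle — and the place where this paper genuinely departs from \cite{bordenave2018nonbacktracking, bordenave2020detection, stephan2020non} — is flagged in the organization section: distinct triplets in $\vec E$ are not independent, because two oriented two-paths $(e_1,e_2,e_3)$ and $(e_1',e_2,e_3')$ through the same right vertex $e_2$ share the randomness of the edges at $e_2$, and a right vertex of degree $k$ spawns $\binom{k}{2}$ correlated triplets. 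So the combinatorial bookkeeping of the path expansion must group edges by the \emph{right} vertices they pass through rather than treating each triplet-edge as an independent coin, and the weight estimates must account for the fact that a single high-degree right vertex (degree up to $\sim \log n/\log\log n$) contributes a large multiplicative factor; this is where the $d^6 K^{20}\log^{14}(n)$ in $C_2$ comes from. I expect the bulk of the work, and the real technical novelty, to be in carefully setting up this modified path-counting so that the contribution of correlated triplets through shared right vertices is absorbed into the stated polynomial prefactors rather than blowing up the norm bound. Once \eqref{eq:UU_VV}--\eqref{eq:BPU_PVB} are in hand, combining the concentration estimates with the tangle-free event and a union bound over $i \in [r_0]$ gives the claimed probability $1 - cn^{-1/2}$.
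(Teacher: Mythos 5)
Your overall architecture matches the paper's: local coupling of neighborhoods to a bipartite Galton--Watson tree, computation of the tree functionals by reduction to the weighted non-bipartite model of \cite{stephan2020non} (which is exactly how $\Gamma^{(\ell)}$ arises, as the second moment $\overline{f_{\phi_i,\ell}f_{\phi_j,\ell}}$ of the tree process), quantitative functional concentration for \eqref{eq:UU_VV}--\eqref{eq:UV_VBU}, and a tangle-free trace method for the bulk. You also correctly identify the genuine novelty, namely that oriented two-paths sharing a right vertex are not independent; the paper handles this by grouping ``bad'' triplets by their common middle point and applying the pointwise decoupling inequality $|X_{e_1}X_{e_2}-p^2|\leq(1+4p)|(X_{e_1}-p)(X_{e_2}-p)|$, which is in the spirit of what you describe.

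There is, however, one concrete gap in your treatment of \eqref{eq:BPU_PVB}. You propose to bound $\E\tr\bigl((B^\ell(B^*)^\ell)^k\bigr)$ ``after projecting away the signal.'' This does not work as stated: the projector $P_{\im(\hat U)^\perp}$ is not a local or combinatorial object, so inserting it destroys the path-counting structure on which the trace method relies, and without it the trace is dominated by the signal and only yields the trivial bound $\nu_1^{2\ell}$. The missing ingredient is the telescoping decomposition of the tangle-free operator $B^{(\ell)}$ into a fully centered part $\underline{B}^{(\ell)}$, explicit low-rank signal terms $\sum_t\sum_k \nu_k^2\,\underline{B}^{(t-1)}\chi_k\check\chi_k^* B^{(\ell-t-1)}$, boundary terms involving $H, H^{(1)}, \tilde H$, and tangled remainders $R_t^{(\ell)}$. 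The trace method is applied separately to each centered or remainder piece (each of which is amenable to path counting), while the projection is used only to kill the low-rank terms, via the bound on $\langle\check\chi_i, B^{t}w\rangle$ for $w\perp B^\ell\chi_i$, which in turn requires the additional pseudo-eigenvector estimate $\norm{B^{t+1}\chi_i-\nu_i^2 B^t\chi_i}$ (Lemma \ref{lem:partial_telescope}); this last estimate does not appear anywhere in your plan. Without this decomposition and the accompanying quasi-eigenvector control, the bulk bound cannot be closed.
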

The statements \eqref{eq:UU_VV} and \eqref{eq:UV_VBU} are proved in Section~\ref{sec:pseudo_eigenvectors}, and  \eqref{eq:BPU_PVB} is proved in Section \ref{sec:bulk_radius}.

\subsection{Sketch of proof for Theorems \ref{thm:eigenvalues} and \ref{thm:eigenvectors}}

Theorems \ref{thm:eigenvalues} and \ref{thm:eigenvectors} stem from Theorem \ref{thm:spectral_structure_Bl} via non-Hermitian perturbation arguments. Details can be found in \cite{stephan2020non, bordenave2020detection}.  Following the steps in \cite[Section 5.2]{stephan2020non}, the bounds in Theorem \ref{thm:spectral_structure_Bl} imply that $B$ is nearly diagonalized by $U, \hat U$, in the sense that for a constant $C_3$ depending polynomially on $r, K,\kappa, d, \log n$, 
\begin{equation}
    \norm*{B^\ell - U^* \Sigma^\ell \hat U} \leq C_3 \theta^{2\ell} \notag 
\end{equation}
and $U, \hat U$ are nearly orthogonal. In turn, an application of the Bauer-Fike Theorem \cite{bauer.fike_1960_norms} yields that for $i\in [r_0]$,
\begin{equation}
    \lambda_i^\ell = \nu_i^{2\ell} + C_4 \theta^{2\ell} = \nu_i^{2\ell}\left(1 + C_4\left( \frac{\theta}{\nu_i}\right)^{2\ell}\right), \notag 
\end{equation}
and taking the $1/\ell$-th power on both sides, we get
\begin{equation}
    \lambda_i = \nu_i^2 \left(1 + C_5\left( \frac{\theta}{\nu_i}\right)^{2\ell}\right),  \notag 
\end{equation}
where $C_4, C_5$ depend polynomially on $r, K,\kappa, d, \log n$. All the other eigenvalues of $B$ satisfy \[ |\lambda| \leq C_5^{1/\ell} \theta^2.\]
This corresponds to Theorem \ref{thm:eigenvalues}. 

Now, let $\sigma = C_4 \theta^{2\ell}$. For $i \in [r_0]$, we define $\cM_i = \{ j \in [r_0] : \nu_j = \nu_i\}$, and the eigengap
\begin{equation}
    \delta_i = \min_{j \neq \cM_i} |\nu_i^{2\ell} - \nu_j^{2\ell}|. \notag 
\end{equation}
We will assume in the following that $\delta_i \geq  2\sigma$; otherwise, the bounds we show are trivial. Recall the definition of $u_i$ from \eqref{eq:def_u_uhat}, and the definition of $\chi_i$ from \eqref{eq:def_chi}. By the proof of Theorem 8 in \cite{stephan2020non}, if $\xi_i$ is an eigenvector of $B$ associated to $\lambda_i$, there exists a vector $\tilde u_i$ in $\vect(u_j: j \in \cI)$ such that
\begin{equation}
    \left\|\xi_i - \frac{\tilde u_i}{\norm{\tilde u_i}}\right\| \leq \frac{3\sigma }{\delta_i - \sigma} \leq \frac{6\sigma}{\delta_i}. \notag 
\end{equation}
From  the definition of $u_i, i\in [r_0]$ in \eqref{eq:def_u_uhat}, by linearity,  we can write 
\[\tilde u_i = \frac{B^\ell T \tilde  \phi_i}{\nu_i^{2\ell}},\]
where $\tilde \phi_i \in \vect(\phi_j: j \in \cI)$ is a unit vector. For simplicity, we assume $\tilde \phi_i = \phi_i$ (and hence $\tilde u_i = u_i$); the proof holds, \emph{mutatis mutandis}, in the general case.  

Note that using the definition of $S_{\Delta}$ from \eqref{eq:matrix_relation2}, $S_{\Delta} S_{\Delta}^*$ is a diagonal matrix such that 
\begin{align}
   (S_{\Delta} S_{\Delta}^* )_{xx}&=\sum_{e\in \vec{E}: x=e_1} (A_{e_1e_2}A_{e_3e_2})^2 \leq \left( \frac{L}{d}\right)^4 \sum_{e_2\in [m],e_3\not=x} X_{xe_2}X_{e_3e_2}. \notag 
\end{align}
Here $\sum_{e_2,e_3} X_{xe_2}X_{e_3e_2}$ counts the number of wedges in $G$ starting from $x$, which is bounded by $cd^2 \log n$ for all $x\in [n]$ for an absolute constant $c>1$ with probability at least $1-n^{-1}$ from Proposition~\ref{prop:neighbourhood_size_bounds}. Therefore $\| S_{\Delta}\| \leq  cK\theta^{2}\sqrt{\log n}$ and

\begin{equation}
    \left\|S_\Delta \xi_i - S_\Delta \frac{u_i}{\norm{u_i}}\right\| \leq \frac{c\sigma K^2\theta^2\sqrt{\log n}}{\delta_i}. \notag 
\end{equation}
On the other hand, using Lemma \ref{lem:UV_UBV} and \eqref{eq:SBtSBt} in Lemma \ref{lem:UU_VV}, and recalling  matrix relations an definitions in \eqref{eq:matrix_relation2}, \eqref{def:check}, \eqref{eq:def_chi},  we have 
\begin{align}
    \langle S_\Delta u_i, \phi_i \rangle  &= \langle u_i ,\check \chi_i \rangle = \nu_i^2  (1+ O(C_0n^{-\eps/2}\tau_i^{2\ell})) \notag  \\
    \norm{S_\Delta u_i}^2 & = \nu_i^4\left( \Gamma_{ii}^{(\ell+1)} + O(C_0 n^{-\eps/2} \tau_i^{4\ell}) \right) \notag 
\end{align}
for a constant $C_0$ depending polynomially on $r, K,\kappa, d, \log n$.
From \eqref{eq:BtBt} in Lemma \ref{lem:UU_VV},
\begin{align}
    \norm{u_i}^2=  d^2\Gamma_{ii}^{(\ell)}+ O(C_0n^{-\eps/2}\tau_i^{4\ell}). \notag 
\end{align}
 
Since $\Phi$ is a positive matrix with spectral radius $\rho^2$, from \eqref{eq:def_Gamma},
\begin{equation}
    \Gamma_{ii}^{(\ell)} = \left\langle \ind, \left(I - \frac{\Phi}{\nu_i^4 d^2} \right)^{-1} \left( \phi_i \circ \phi_i \right) \right\rangle + O(\tau_i^{4\ell})=\gamma_i + O(\tau_i^{4\ell}). \notag 
\end{equation}
Putting all of the previous bounds together, recalling the definition of $\zeta_i^R$ from \eqref{eq:reduced_eigenvec}, we do find
\begin{equation}
  \left\langle  \frac{\zeta_i}{\norm{\zeta_i}}, \phi_i \right\rangle= \left\langle   \frac{ S_\Delta\xi_i }{\norm{S_{\Delta}\xi_i}} , \phi_i\right \rangle = \frac{1}{\sqrt{\gamma_i}} + O\left( \frac{2K^2d \sqrt{\log n}~\sigma}{\delta_i} \right), \notag 
\end{equation}
which is the statement of Theorem \ref{thm:eigenvectors} upon diving both $\delta_i$ and $\sigma$ by $\nu_i^{2\ell}$. The same holds for the left eigenvectors by replacing $u_i$ by $\hat{u}_i$ in the proof.

\section{Parameter inequalities}\label{sec:inequality}
In this section, we collect a few simple inequalities that will be useful in the proof.
First, since \[\rho=\|Q\|\leq \sqrt{nm} \max_{xy}|Q_{xy}|,\] we have $K\geq 1$.  
Further, using the variational formula for singular values,
\[ \rho\geq \frac{1}{\sqrt{mn}} \langle \mathbf{1} , Q\mathbf{1}\rangle =\frac{1}{\sqrt{nm}}\sum_{xy}Q_{xy}=\sum_{x,y}M_{xy}^2=\|M\|_F^2\geq \nu_1^2.
\]
Hence 
\[ 1\leq K=\frac{L^2}{\rho}\leq \frac{L^2}{\nu_1^2}.\]

Next, we derive some entrywise bound on $\Phi:=QQ^*$. For any $x,y$,
\begin{align} \label{eq:entrywisePhi}
\Phi_{xy}=(QQ^*)_{xy}=\sum_{z} Q_{xz}Q_{yz}\leq \frac{K^2\rho^2}{n}. 
\end{align}
Let $(v_k)$ be a set of orthonormal left singular vectors for $Q$ with singular values $\mu_1,\dots, \mu_n$. For $t\geq 1$,  
\begin{align}
    (\Phi^t)_{xy}&=[(QQ^*)^t]_{xy}=\sum_{k}\mu_k^{2t}v_k(x)v_k(y)\\
    &\leq \rho^{2t-2}\sum_{k}\mu_k^{2}|v_k(x)||v_k(y)|\\
    &\leq \rho^{2t-2}\sqrt{(\Phi)_{xx}}\sqrt{(\Phi)_{yy}}\leq \frac{K^2 \rho^{2t}}{n},\label{eq:Phi_bound}
\end{align}
where the last line follows from Cauchy-Schwartz inequality and \eqref{eq:entrywisePhi}.  Similarly, let $\tilde\Phi=Q^* Q$, we have 
\begin{align}\label{eq:tildeQ}
(\tilde\Phi^{t})_{xy}\leq \frac{K^2\rho^{2t}}{m}.
\end{align}
And 
\begin{align}\label{eq:bipartitemoment}
    (\Phi^tQ)_{xy}&\leq \frac{K^2 \rho^{2t+1}}{\sqrt{nm}}, \quad  (\tilde\Phi^tQ^* )_{xy}\leq \frac{K^2 \rho^{2t+1}}{\sqrt{nm}}.
\end{align}
Moreover, for any $t\geq 1$ and $v,w\in\mathbb R^n$,
\begin{align}\label{eq:Phi_scalar_bound}
\langle v, \Phi^t w\rangle =\sum_{xy} (\Phi^t)_{xy}v(x)w(y)\leq \frac{K^2\rho^{2t}}{n} \|v\|_1 \|w\|_1.
\end{align}

\section{Local study of $G$}\label{sec:local_study}

\subsection{Preliminaries}

\paragraph{Rooted graphs and trees} We begin with a rigorous definition for the objects considered in this section:

\begin{definition}[Labeled rooted graphs]
A labeled rooted graph is a triplet $g_\star = (g, o, \iota)$ that consists in:
\begin{itemize}
    \item a graph $g = (V, E)$,
    \item a distinguished vertex (or \emph{root}) $o \in V$,
    \item a labeling function $\iota: V \to \dN$.
\end{itemize}
The set of labeled rooted graphs is denoted by $\cG_\star$.
\end{definition}
When the labeling function $\iota$ is clear, we will use the shortcut $(g, o)$ for $g_\star$. In particular, this will be the case whenever $V \subseteq \dN$ and $\iota(x) = x$. The notion of induced subgraph extends naturally to rooted and/or labeled graphs, provided that the induced subgraph does contain the root $o$.

An important object of study will be the bipartite Galton-Watson tree defined as follows:
\begin{definition}[Bipartite Galton-Watson tree]\label{def:gw_tree}
    The bipartite Galton-Watson tree $(T, \rho)$ is a random tree $(T, o, \iota)$ defined as follows:
    \begin{itemize}
        \item the root of the tree (depth 0) has label $\iota(o) = \rho$,
        \item the number of children of each vertex $x \in T$ is independent, and is defined as follows:
        \begin{itemize}
            \item if $x$ has even depth $2h$, $x$ has $\Poi(d^2)$ children at depth $2h+1$,
            \item if $x$ has odd depth $2h+1$, $x$ has exactly one child at depth $2h+2$.
        \end{itemize}
        \item each vertex $x$ of even depth has a label $\iota(x) \sim \Unif([n])$, and each vertex of odd depth has a label $\iota(x) \sim \Unif([m])$.
    \end{itemize}
\end{definition}

\paragraph{Tangle-free neighbourhoods and graphs} For a rooted graph $(g, o)$, we denote by $(g, o)_{t}$ the subgraph induced by the  vertices within distance $t$ from $o$. The boundary of this subgraph, spanned by the vertices at distance exactly $t$ from $o$, will be denoted by $\partial(g, o)_t$. The notion of \emph{tangle-freeness}, which will be crucial in our analysis, is defined as follows:
\begin{definition}[Tangle-freeness]
    A rooted graph $(g, o)$ is said to be tangle-free if it contains at most one cycle, otherwise, it is called tangled. For a given $t \in \dN$, a graph $g$ is said to be \emph{tangle-free} if for any vertex $x \in g$, the neighborhood $(g, x)_t$ is tangle-free.
\end{definition}

\subsection{Bounding the neighborhood sizes}

The main issue in bounding the neighborhood sizes is that a typical left vertex $x$ has approximately $mp =d\sqrt{\alpha}$ neighbors, which diverges as $\alpha \to \infty$. However, most of those neighbors have degree one and thus do not matter in the spectrum of $B$! We formalize this by defining the bipartite graph $\bar G = (V_1, \bar V_2, \bar E)$ induced by $V_1$ and the non-leaf vertices in $V_2$:
\begin{equation}
    \bar V_2 := \Set*{y \in V_2 : \deg(y) \geq 2}. \notag 
\end{equation}

We are then able to show the following proposition:
\begin{proposition}\label{prop:neighbourhood_tail_bound}
 There exists constants $c_0, c_1 > 0$ such that for all $x \in [n]$ and $s > 0$,
 \begin{equation}
     \Pb*{\forall t \geq 0, \quad |\partial(\bar G, x)_{2t}| \leq s d^{2t} \text{ and }  |\partial(\bar G, x)_{2t+1}| \leq s d^{2t+2}} \geq 1 - c_0 e^{-c_1 s} \notag 
 \end{equation}
\end{proposition}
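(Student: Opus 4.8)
The plan is to prove the two bounds on $|\partial(\bar G,x)_{2t}|$ and $|\partial(\bar G,x)_{2t+1}|$ simultaneously by a coupling with the bipartite Galton--Watson tree $(T,\rho)$ of Definition~\ref{def:gw_tree}, together with a union bound over the depth $t$. The key structural observation is that, although a left vertex of $G$ has $\sim\sqrt{m/n}$ neighbours in $V_2$, the vast majority of those have degree one and are deleted when passing to $\bar G$; a degree-$\ge 2$ right neighbour is one that is also joined to some \emph{other} left vertex. Conditionally on the first-generation exploration, the number of degree-$\ge 2$ right neighbours of $x$ is stochastically dominated by (essentially) a $\Bin(m, p\cdot(1-(1-p)^{n-1}))$ variable, and $m p \cdot n p = d^2$ up to a $1+o(1)$ factor since $p = d/\sqrt{mn}$; likewise each such right vertex has at most $\Bin(n-1,p)$ further left neighbours, with mean $\le np \le d$ (here one uses $\alpha\ge d^c$, so $np=d/\sqrt{\alpha}\le d$). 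Thus one even generation multiplies the size by a factor stochastically $\preceq \Poi(d^2)$ and one odd generation by a factor $\preceq \Poi(d)$ — heavier-tailed than the exact-one-child rule of $T$, but with the same product $d^2$ over a length-two step. Concretely, $|\partial(\bar G,x)_{2t}|$ is dominated by the size of generation $2t$ of a two-type Galton--Watson process whose even-to-odd offspring is $\preceq \Poi(d^2)$ and whose odd-to-even offspring is $\preceq \Poi(d)$ (so $\partial_{2t+1}\preceq d\cdot\partial_{2t}\preceq d^{2t+1}$ in expectation, matching the claimed $d^{2t+2}$ with room to spare).

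The execution proceeds as follows. First I would set up the exploration process of $\bar G$ from $x$ as a breadth-first search, revealing at step $2h\to 2h+1$ the degree-$\ge 2$ right neighbours of the current left boundary (revealing the Bernoulli entries $X_{yz}$ as needed), and at step $2h+1\to 2h+2$ the left neighbours of the current right boundary, discarding any vertex already seen. Second, I would establish the stochastic domination: condition on $|\partial(\bar G,x)_{2h}| = a$; then $|\partial(\bar G,x)_{2h+1}|$ is dominated by a sum of $a$ i.i.d.\ copies of $\Bin(m, q)$ with $q = p(1-(1-p)^{n-1}) \le p\cdot np = d^2/m$, hence by $\Poi(ad^2)$ (Poisson upper-dominates Binomial with matched mean after a Chernoff step); conditioning further on $|\partial(\bar G,x)_{2h+1}| = b$, the next generation $|\partial(\bar G,x)_{2h+2}|$ is dominated by a sum of $b$ copies of $\Bin(n-1,p)$, mean $\le np \le d$, hence by $\Poi(bd)$. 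Third, I would invoke a standard tail bound for the total progeny / generation sizes of a subcritical-rescaled Galton--Watson process: writing $N_t$ for generation $t$ of the dominating process, a Chernoff / exponential-martingale argument (e.g.\ controlling $\E[\lambda^{N_{t+1}}\mid N_t]$ via the Poisson mgf $\exp(N_t d^2(e^{\lambda'}-1))$) gives $\Pb(N_{2t} > s d^{2t}) \le C e^{-c s \cdot 2^{-?}}$ uniformly in $t$, and similarly for odd generations; summing the geometric-in-$t$ failure probabilities yields the stated $1 - c_0 e^{-c_1 s}$ after adjusting constants. A clean way to package step three is to bound $\sum_t \E[N_{2t}] d^{-2t} \le C$ and $\sum_t \E[N_{2t+1}]d^{-2t-2}\le C$ and apply a supermartingale maximal inequality to $M_t := N_{2t} d^{-2t}$ (which is a supermartingale since each two-step generation multiplies the mean by exactly $d^2$), giving $\Pb(\sup_t M_t > s)\le C/s$ — then boost to exponential decay by the exponential version of the same argument, using that the one-step multiplicative factors have finite exponential moments.

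The main obstacle I expect is \textbf{the lack of independence and the uniform-in-$t$ control}: unlike a clean Galton--Watson exploration, in $\bar G$ the event "$y$ has degree $\ge 2$" couples different right vertices (they can share the same ``second'' left endpoint), and a left vertex discovered at generation $2h+2$ may coincide with an earlier one, so the true process is only \emph{dominated} by — not equal to — the branching process; one must check the domination holds in the joint (multi-generation) sense, which is where a careful BFS coupling revealing one edge at a time, with the ``discard already-seen vertices'' rule only ever \emph{decreasing} boundary sizes, does the job. The second delicate point is that the exponential tail must be uniform over the infinite range $t\ge 0$ with a single constant $c_1$ in the exponent, not a $t$-dependent one; this forces the exponential-supermartingale route (Doob's maximal inequality for $e^{\theta M_t}$) rather than a naive union bound over $t$ with per-level Chernoff bounds, since the latter would lose a factor growing in $t$. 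Everything else — the Poisson-dominates-Binomial step, the estimate $q\le d^2/m$, the bound $np\le d$ from $\alpha\ge d^c$ — is routine.
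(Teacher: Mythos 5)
Your high-level strategy --- a breadth-first exploration of $\bar G$, stochastic domination of each generation by binomial/Poisson offspring, and a tail bound uniform in the depth --- is the same as the paper's, but two of your key steps are wrong or incomplete as stated.

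First, the offspring bookkeeping. You assign the odd-to-even step a factor dominated by $\Poi(d)$ ``with mean $\le np\le d$,'' while the even-to-odd step has mean $\approx mp\cdot np=d^2$; taken literally this gives $\E{|\partial(\bar G,x)_{2t}|}\lesssim d^{3t}$, not the claimed $d^{2t}$, and your parenthetical ``same product $d^2$ over a length-two step'' contradicts your own factors. The correct accounting is: a right vertex of $\bar V_2$ discovered from a left parent has a number of new children distributed as $\Bin(n_t,p)$ \emph{conditioned on being at least $1$} (that is what $\deg\ge 2$ buys you), and this conditioned law is dominated by $1+\Bin(n,p)$ --- not by $\Bin(n-1,p)$ as you claim, since conditioning on $\{\ge 1\}$ pushes the distribution up, not down. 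The odd-to-even mean is therefore $1+np$, and the two-step product is $mp\tilde p\,(1+np)=d^2+O(d^3\alpha^{-1/2})$, which is exactly what makes $d^{2t}$ the right growth rate; this is the paper's $\tilde d_1=mp\tilde p$ and $\tilde d_2=\tilde d_1(1+np)$.

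Second, the uniform-in-$t$ tail. You dismiss ``a naive union bound over $t$ with per-level Chernoff bounds'' as losing a factor growing in $t$, and propose an exponential supermartingale with Doob's maximal inequality instead. But the union bound is precisely the paper's route, and it loses nothing: choosing relative slacks $\eps_t$ that decay fast enough for $\prod_t(1+\eps_t)$ to stay bounded, the conditional Chernoff bound at level $t$ has exponent of order $s\,S_t\gamma(\eps_{t+1})\gtrsim s\,t$, because the level means $S_t$ grow geometrically while $\eps_{t+1}^2$ decays like $t\,S_t^{-1}$; the per-level failure probabilities $e^{-cs(t+1)}$ then sum to $O(e^{-cs})$. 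Your alternative, as written, does not close the argument: the plain supermartingale $M_t=N_{2t}d^{-2t}$ with Doob gives only $\dP(\sup_t M_t>s)\le C/s$, and the ``exponential version'' is asserted rather than proved --- $e^{\theta M_t}$ is not a supermartingale merely because $M_t$ is, and making $e^{\theta_t N_{2t}}$ one forces you to shrink $\theta_t$ by strictly more than $d^{-2t}$ to absorb the second-order term of the Poisson moment generating function, i.e.\ exactly the shrinking-parameter bookkeeping you were trying to avoid. Either carry out that computation in full or adopt the union-bound route; as it stands the exponential tail, which is the entire content of the proposition, is not established.
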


\begin{proof} 
 We define
\begin{equation}
    \tilde p = \Pb{\Bin(n, p) \geq 1}=1-(1-p)^n, \quad \tilde d_1 = mp\tilde p, \quand \tilde d_2 = \tilde d_1 (1 + n p). \notag 
\end{equation}
We shall actually show the following bound: for some constants $c_0, c_1, c_2, c_3, c_4$, and $s > 1$, 
\begin{equation}\label{eq:neighbourhood_tail_ugly}
    \Pb*{\forall\  t \geq 0, \quad |\partial(\bar G, x)_{2t}| \leq c_1 s \tilde d_2^{t} \text{ and }  |\partial(\bar G, x)_{2t+1}| \leq c_2 s \tilde d_1 \tilde d_2^{t}} \geq 1 - c_3 e^{-c_4 s}
\end{equation}
We first show how this implies Proposition \ref{prop:neighbourhood_tail_bound}: for $s > 1$ the bound is vacuous as soon as $t > c \log_d(n)$ for some constant $c$, so we can always assume otherwise. Second, we have
\begin{equation}
    \tilde d_1 = d^2 + O\left( d^3/\sqrt{\alpha} \right) \quand  \tilde d_2 = d^2 + O\left( d^3/\sqrt{\alpha} \right), \notag 
\end{equation}
and hence for the timescale considered for $t$ we have $\tilde d_2^t \leq c d^{2t}$ under the assumption \eqref{eq:assumption_alpha}. Finally, we can adjust $c_0$ and $c_1$ such that $c_0 e^{-c_1} \geq 1$ and hence the bound also becomes vacuous when $s \leq 1$.

We now move to prove \eqref{eq:neighbourhood_tail_ugly}. Define the following sequences:
\begin{align*}
    \eps_{2t} &= \max\left(\tilde d_1^{-1} \tilde d_2^{-t+1} (2t), \sqrt{np} \tilde d_1^{-1/2} \tilde d_2^{-(t-1)/2} \sqrt{2t} \right), \\
    \eps_{2t+1} &= \tilde d_2^{-t/2} \sqrt{2t+1} \quand f_t = \prod_{t' = 1}^t (1+\eps_{t'}).
\end{align*}
Due to Assumption \eqref{eq:d_lowerbound}, by definition of $\eps_t, f_t$, there exists some constants $c_0, c_1, c_2$ such that
\begin{equation}
    c_0 \leq f_t \leq c_1 \quand \eps_t \leq c_1. \notag 
\end{equation}
We also define the following upper-bound sequence:
\begin{equation}
    S_{2t} = f_{2t} \tilde d_2^t \quand S_{2t+1} = f_{2t+1} \tilde d_1 \tilde d_2^{t},  \notag 
\end{equation}
which satisfies the following recursion equation:
\begin{equation}
    S_{2t+1} = (1 + \eps_{2t+1}) \tilde d_1 S_{2t} \quand S_{2t+2} = (1 + \eps_{2t+2})(1 + np)S_{2t+1}. \notag 
\end{equation}
We will also need the following lemma, which stems from a classical Chernoff bound:
\begin{lemma}\label{lem:chernoff_binomial_bound}
Let $(Y_1, \dots, Y_T)$ be i.i.d binomial random variables, with mean $\mu$. Then, for any $u \geq 0$,
\begin{equation}
    \Pb*{\sum_{i=1}^T  Y_i \geq (1+u) \mu T} \leq e^{- T \mu \gamma(u)}, \notag 
\end{equation}
where $\gamma(u) = (1+u)\log(1+u) - u$. The function $\gamma(u)$ is strictly increasing, and satisfies the inequality
\begin{equation}\label{eq:bound_legendre_transform}
    \gamma(u) \geq \min\left( \frac{u^2}4, u \right)
\end{equation}
\end{lemma}
For a given vertex $x \in \partial(\bar G, x)_{2t}$, the number of neighbors of $x$ in $\bar V_2 \setminus (\bar G, x)_{2t}$ is dominated by $\Bin(m, p \tilde p)$. For any $t \geq 0$, we can apply Lemma \ref{lem:chernoff_binomial_bound} with $u = \eps_{2t+1}$ to get
\begin{equation}
    \Pb*{|\partial(\bar G, x)_{2t+1}| \geq s S_{2t+1} \given |\partial(\bar G, x)_{2t}| \leq s S_{2t}} \leq \exp\left( -s f_{2t} \tilde d_1 \tilde d_2^{t} \gamma(\eps_{2t+1})  \right). \notag 
\end{equation}
Equation \eqref{eq:bound_legendre_transform} implies that when $u \leq c_1$, $\gamma(u) \geq c_2 u^2$ for some constant $u^2$, and hence
\begin{equation}
    \Pb*{|\partial(\bar G, x)_{2t+1}| \geq s S_{2t+1} \given |\partial(\bar G, x)_{2t}| \leq s S_{2t}} \leq \exp\left( -c s (2t+1))  \right). \notag 
\end{equation}
Now, the number of neighbors of a vertex in $\partial(\bar G, x)_{2t+1}$ in $V_1$ is dominated by a $X \sim \Bin(n, p)$ random variable, conditioned on $X \geq 1$, which is itself dominated by a $1 + \Bin(n, p)$ variable (see, e.g., \cite[Proposition 1.1]{broman2011stochastic}). This time, we apply Lemma \ref{lem:chernoff_binomial_bound} by noticing that
\begin{equation}
    \Pb*{\sum_{i=1}^T  (1 + Y_i) \geq (1+u) (1 +\mu) T} = \Pb*{\sum_{i=1}^T  Y_i \geq \left(1+u + \frac{u}{\mu}\right) \mu T}, \notag 
\end{equation}
so that
\begin{equation}
     \Pb*{|\partial(\bar G, x)_{2t+2}| \geq s S_{2t+2} \given |\partial(\bar G, x)_{2t+1}| \leq s S_{2t+1}} \leq \exp\left( -s np f_{2t+1} \tilde d_1 \tilde d_2^t \gamma(\eps_{2t+2} / np)  \right). \notag 
\end{equation}
We can now notice that by the bound on $\gamma$ in \eqref{eq:bound_legendre_transform} and definition of $\eps_{2t+2}$, we have
\begin{equation}
    np \tilde d_1 \tilde d_2^t \gamma(\eps_{2t+2} / np) \geq c (2t+2). \notag 
\end{equation}
Hence, we have shown that for any $t \geq 0$,
\begin{equation}
    \Pb*{|\partial(\bar G, x)_{t+1}| \geq s S_{t+1} \given |\partial(\bar G, x)_{t}| \leq s S_{t}} \leq \exp(- cs (t+1)). \notag 
\end{equation}
This implies that
\begin{equation}
    \Pb*{\exists t \geq 0, |\partial(\bar G, x)_{t}| \geq S_t} \leq \sum_{t \geq 0} \exp(-cs(t+1)) = \frac{e^{-cs}}{1 - e^{-cs}}, \notag 
\end{equation}
from which \eqref{eq:neighbourhood_tail_ugly} ensues since $s > 1$.
\end{proof}

With the same arguments as in \cite[Lemma 29]{bordenave2018nonbacktracking}, this tail bound implies the following proposition:

\begin{proposition}\label{prop:neighbourhood_size_bounds}
 With probability at least $1 - 1/n$, we have for all $x \in [n]$,
 \begin{equation}
    |(\bar G, x)_{2t}| \leq c \log(n) d^{2t} \quand |(\bar G, x)_{2t+1}| \leq c \log(n) d^{2t+2}. \notag 
 \end{equation}
 Further, we have for all $x\in V_1$, $p \geq 1$,
 \begin{equation}
     \E*{|(\bar G, x)_{2t}|^{p}}^{1/p} \leq c p d^{2t} , \notag 
 \end{equation}
 and
 \begin{equation}
     \E*{\max_{x\in [n], t \geq 0} |(\bar G, x)_{2t}|^{p}} \leq (cp)^p + (c\log(n))^p. \notag 
 \end{equation}
\end{proposition}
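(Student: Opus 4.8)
The plan is to read all three bounds off the single-vertex tail estimate of Proposition~\ref{prop:neighbourhood_tail_bound}; the only additional ingredient is that, since $d \geq 1 + c' > 1$, summing a geometric series turns control of the \emph{spheres} $\partial(\bar G,x)_j$ into control of the \emph{balls} $(\bar G,x)_t$ at the cost of one multiplicative constant. Concretely, for each $x \in [n]$ I would introduce the random variable
\[
  Z_x := \inf\Set*{ s \geq 0 : |\partial(\bar G, x)_{2t}| \leq s\, d^{2t} \text{ and } |\partial(\bar G, x)_{2t+1}| \leq s\, d^{2t+2} \text{ for all } t \geq 0 },
\]
which is finite almost surely because $\bar G$ is finite, and which by Proposition~\ref{prop:neighbourhood_tail_bound} satisfies $\Pb*{Z_x > s} \leq c_0 e^{-c_1 s}$. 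Summing spheres into balls gives
\[
  |(\bar G, x)_{2t}| = \sum_{j=0}^{2t} |\partial(\bar G, x)_j| \leq Z_x\Bigl( \sum_{h=0}^{t} d^{2h} + \sum_{h=0}^{t-1} d^{2h+2}\Bigr) \leq C\, Z_x\, d^{2t},
\]
and similarly $|(\bar G, x)_{2t+1}| \leq C\, Z_x\, d^{2t+2}$; so the whole proposition reduces to tail and moment control of $Z_x$ and of $\max_x Z_x$.

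For the first assertion I would apply Proposition~\ref{prop:neighbourhood_tail_bound} with $s = C\log n$, choosing $C$ large enough that $c_0 n^{1-c_1 C} \leq n^{-1}$, and union-bound over the $n$ left vertices; on the resulting event $\Set{\max_x Z_x \leq C\log n}$, of probability at least $1 - 1/n$, the displayed reduction gives $|(\bar G,x)_{2t}| \leq C'\log(n)\, d^{2t}$ and $|(\bar G,x)_{2t+1}| \leq C'\log(n)\, d^{2t+2}$ simultaneously for all $x$ and $t$, which is the claim after renaming constants.

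For the moment bounds I would use the elementary fact that a nonnegative variable $Z$ with $\Pb*{Z > s} \leq c_0 e^{-c_1 s}$ satisfies $\E*{Z^p} = \int_0^\infty p s^{p-1}\Pb*{Z > s}\,\dd s \leq c_0\, c_1^{-p}\, p! \leq (cp)^p$, hence $\E*{Z^p}^{1/p} \leq cp$; applied to $Z_x$ and combined with $|(\bar G,x)_{2t}| \leq C Z_x d^{2t}$ this yields $\E*{|(\bar G,x)_{2t}|^{p}}^{1/p} \leq cp\, d^{2t}$. For the last bound I would set $W := \max_{x\in[n]} Z_x$, so that $\Pb*{W > s} \leq n\, c_0 e^{-c_1 s}$ by a union bound, and split the layer-cake integral $\E*{W^p} = \int_0^\infty p s^{p-1}\Pb*{W > s}\,\dd s$ at $s_0 := (2/c_1)\log(c_0 n)$: bounding the integrand by $p s^{p-1}$ below $s_0$ and by $p s^{p-1}e^{-c_1 s/2}$ above $s_0$ gives $\E*{W^p} \leq s_0^p + (cp)^p \leq (c\log n)^p + (cp)^p$. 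Since $\max_{x\in[n],\,t\geq 0} |(\bar G,x)_{2t}|\, d^{-2t} \leq C W$, this is exactly the third bound, the normalization by $d^{2t}$ being the one supplied by the sphere-to-ball reduction.

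I do not anticipate a real obstacle: all of the probabilistic difficulty has been absorbed into Proposition~\ref{prop:neighbourhood_tail_bound}, and the remaining work is the standard Chernoff/layer-cake bookkeeping already used in \cite{bordenave2018nonbacktracking}. The only point deserving a little care is making the sphere-to-ball passage hold uniformly over all depths at once, which is exactly why a single scalar $Z_x$ (rather than a depth-dependent quantity) is the right object to tail-bound, and where the hypothesis $d > 1$ enters through convergence of $\sum_h d^{2h}$.
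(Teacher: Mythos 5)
Your argument is correct and is exactly the one the paper intends: the paper's entire proof is the remark that the tail bound of Proposition~\ref{prop:neighbourhood_tail_bound} yields these estimates ``with the same arguments as in'' \cite{bordenave2018nonbacktracking}, which are precisely the steps you carry out --- the sphere-to-ball geometric summation (valid because $d \geq 1+c'$), a union bound at $s = C\log n$, and the layer-cake moment computations for $Z_x$ and $\max_x Z_x$. Your reading of the third bound as controlling $\max_{x,t}\, |(\bar G,x)_{2t}|\, d^{-2t}$ is the intended one (the $d^{-2t}$ normalization is implicit in the paper's statement, as the unnormalized maximum over all $t\geq 0$ would be the whole component), so there is no gap.
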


\subsection{Tree approximation}

We now couple the neighborhoods of $G$ with an appropriately defined tree process. Central to this section is the following breadth-first exploration process of a vertex $x \in [n]$:
\begin{itemize}
    \item start with $A_0 = \{ x\}$
    \item at step $t$, take any vertex $x_t \in A_t$ among those closest to $x$
    \begin{itemize}
        \item if $x_t \in V_2$, we let $N_t$ be the set of all neighbors of $x_t$ in $V_1 \setminus \bigcup_{s \leq t} A_s$
        \item if $x \in V_1$, $N_t$ is the set of neighbors of $x_t$ in $V_2 \setminus \bigcup_{s \leq t} A_s$ \emph{that have a neighbour in $V_1 \setminus \bigcup_{s \leq t} A_s$}.
    \end{itemize}
    \item update $A_{t+1} = A_t \cup N_t \setminus \{x_t\}$ and continue.
\end{itemize}
We denote by $\cF_t$ the filtration adapted to this process and call an edge $\{ e_1, e_2 \}$ a \emph{discovered edge} if for some $t \geq 0$ we have $x_t = e_1$ and $e_2 \in N_t$. 
We begin with a result about the tree-like properties of neighborhoods in $G$:

\begin{proposition}\label{prop:locally_tree_like}
 For a given vertex $x$, and $h \geq 0$, we have
 \begin{equation}
     \Pb*{(G, x)_{2h} \text{ contains a cycle}} \leq \frac{c d^{2h+2}}{n}. \notag 
 \end{equation}
 Further, with probability at least $1 - cd^{4h+4}/n$, the graph $G$ is $h$-tangle-free: for every vertex $x$, $(G, x)_{2h}$ contains at most one cycle.
\end{proposition}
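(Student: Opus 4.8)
The plan is to bound the expected number of short cycles through a fixed vertex via a first-moment (union bound) argument over potential cycles, exploiting the sparsity $p = d/\sqrt{mn}$ and the neighborhood-size control already established in Proposition~\ref{prop:neighbourhood_size_bounds}. Concretely, a cycle in $(G,x)_{2h}$ corresponds to a closed walk in the bipartite graph whose vertex set is a subset of the $2h$-ball around $x$; since $G$ is bipartite, any cycle has even length $2\ell'$ for some $2 \le \ell' \le 2h$ (more precisely, the relevant cycles have length at most $4h$ since they must close up within the ball), alternating between $\ell'$ left vertices and $\ell'$ right vertices. I would enumerate such cycles by first fixing the (unordered) set of left vertices and right vertices involved, or more simply by summing over the length and the ordered sequence of distinct vertices $v_1 \in V_1, w_1 \in V_2, v_2 \in V_1, w_2 \in V_2, \dots$ forming the cycle, with one of them at distance $\le 2h$ forcing the whole structure to be anchored near $x$.

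The key computation: the probability that a fixed potential cycle on $a$ left vertices and $a$ right vertices is present is $p^{2a}$ (there are $2a$ edges in a cycle of length $2a$), and the number of ways to choose the ordered vertex sequence is at most $n^a m^a$ — but we must also pay for connectivity to $x$. Here I would instead argue as follows: condition on the event of Proposition~\ref{prop:neighbourhood_size_bounds}, so that $|(\bar G, x)_{2h}| \le c\log(n) d^{2h}$ and similarly for odd radii. A cycle in $(G,x)_{2h}$ through the ``relevant'' part of the graph must use an edge $\{e_1,e_2\}$ that closes a loop; exploring $(G,x)_{2h}$ by breadth-first search, each newly examined edge from a left vertex to a \emph{previously seen} right vertex (or the symmetric event) occurs, conditionally, with probability at most $(\text{number of already-seen vertices on the other side}) \cdot p$. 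Summing over the $O(d^{2h})$ left vertices and $O(d^{2h+2})$ right vertices in the ball and over the $O(d^{2h+2})$ candidate already-seen endpoints, the union bound gives a total probability $O(d^{2h} \cdot d^{2h+2} \cdot p) = O(d^{4h+2} / \sqrt{mn})$; using $m \ge n$ this is $O(d^{4h+2}/n)$. A slightly more careful accounting — noting that to \emph{create} a cycle we only need one such back-edge, and the first-moment count of back-edge opportunities is $O(d^{2h+2})$ left-right pairs each with probability $p$, but the left endpoint must itself be reachable, contributing another factor that is absorbed — yields the stated $cd^{2h+2}/n$. The tangle-free statement then follows by the same method applied to the event that \emph{two} independent back-edges appear: the count of pairs of back-edge opportunities is $O(d^{4h+4})$, each pair present with probability $p^2 = d^2/(mn) \le d^2/n^2$, giving $O(d^{4h+6}/n^2)$; re-examining, the clean bound $cd^{4h+4}/n$ comes from the fact that conditioning on one cycle already being present localizes the second back-edge within a ball of size $O(d^{2h})$, so the second factor is $O(d^{2h+2} \cdot p) = O(d^{2h+2}/\sqrt{mn})$, and multiplying by the $O(d^{2h+2})$ cost of the first cycle gives $O(d^{4h+4}/\sqrt{mn}) = O(d^{4h+4}/n)$. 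Finally, a union bound over all $x \in [n]$ converts the ``for a given vertex'' statement into the uniform ``for every vertex'' statement, at the cost of one extra factor of $n$ which is why the tangle-free probability is $1 - cd^{4h+4}/n$ rather than $1-cd^{4h+4}/n^2$.

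The main obstacle I anticipate is bookkeeping the conditioning correctly so that the neighborhood-size bounds from Proposition~\ref{prop:neighbourhood_size_bounds} can legitimately be fed into the union bound without circularity — one must run the exploration process, use the $\cF_t$-measurability of the already-revealed ball to bound the conditional probability of each new back-edge, and only invoke the high-probability size bound on the revealed portion. A secondary subtlety is that in the bipartite graph with diverging aspect ratio, the \emph{raw} degree of a left vertex is $\Theta(\sqrt{m/n}) \to \infty$, so one genuinely must work with $\bar G$ (restricting to right vertices of degree $\ge 2$) when counting, exactly as the paper sets up; cycles of $G$ that pass through a degree-one right vertex don't exist, so cycles of $G$ in the ball are the same as cycles of $\bar G$ in the ball, and the size control is available there. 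Modulo these care points, the argument is a standard sparse-random-graph first-moment computation, essentially identical to the one in \cite{bordenave2018nonbacktracking}, which is why I would simply cite that reference for the passage from the tail bound to the tangle-free conclusion.
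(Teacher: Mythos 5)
Your proposal is correct and follows essentially the same route as the paper: a conditional first-moment/union-bound argument run along the breadth-first exploration filtration, using the neighborhood-size bounds of Proposition~\ref{prop:neighbourhood_size_bounds} on $\bar G$ to count cycle-closing edges, with the tangle-free statement obtained by squaring and a union bound over $x\in[n]$. The only wrinkle is your exponent bookkeeping for the radius ($d^{4h+2}$ versus the stated $d^{2h+2}$, and the somewhat muddled ``re-examination'' in the tangled case), but the paper's own proof has the same $h$ versus $2h$ ambiguity, so this does not reflect a gap in your argument.
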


\begin{proof}
Consider the exploration process outlined above, and let $\tau$ be the first time at which all the vertices in $(\bar G, x)_h$ have been revealed. By construction, $\tau$ is a stopping time for the filtration $\cF_t$, and given $\cF_\tau$, the discovered edges of $(G, x)_h$ form a spanning tree. Hence, there are two possibilities for a cycle in $(G, x)_h$:
\begin{itemize}
    \item there is an edge between a vertex of $(G, x)_h \cap V_1$ and $(G, x)_h \cap V_2$,
    \item there exists a vertex $z \in V_2 \setminus (G, x)_h$ and two vertices $x_1, x_2 \in (G, x)_h \cap V_1$ such that $z$ is connected to both $x_1$ and $x_2$. 
\end{itemize}
The number of such events of the first type is stochastically dominated by a $\Bin(|(G, x)_h|^2, p)$ variable and the second by a $\Bin(|(G, x)_h|^2, mp^2)$. Further, given $\cF_{\tau}$, those two types of events are independent.
Recall the following simple bounds:
\begin{equation}
    \Pb*{\Bin(N, q) \geq 1} \leq Nq \quand \Pb*{\Bin(N, q) \geq 2} \leq N^2 q^2. \notag 
\end{equation}
Then,
\begin{equation}
    \Pb*{(G, x)_h \text{ contains a cycle} \given \cF_\tau} \leq |(\bar G, x)_h|^2 (mp^2 + p). \notag 
\end{equation}
and similarly
\begin{equation}
    \Pb*{(G, x)_h \text{ is tangled} \given \cF_\tau} \leq |(\bar G, x)_h|^4 (m^2p^4 + p^2).\notag 
\end{equation}
Taking expectations in both cases and using Proposition \ref{prop:neighbourhood_size_bounds},
\begin{equation}
        \Pb*{(G, x)_{2h} \text{ contains a cycle}} \leq \frac{c d^{2h+2}}{n} \quand \Pb*{(G, x)_{2h} \text{ is tangled}} \leq \frac{c d^{4h + 4}}{n^2}. \notag 
\end{equation}
The second statement of the proposition easily ensues from a union bound.
\end{proof}

We now prove a coupling result between the neighborhoods of $G$ and the Galton-Watson trees introduced in Definition \ref{def:gw_tree}. Recall that the total variation between two probability measures $\dP_1, \dP_2$ is defined as
\begin{equation}
    \dtv(\dP_1, \dP_2) = \min_{\dP \in \pi(\dP_1, \dP_2)} \Pb{X_1 \neq X_2}, \notag 
\end{equation}
where $\pi(\dP_1, \dP_2)$ is the set of all \emph{couplings} of $\dP_1$ and $\dP_2$, i.e. joint distributions of random variables $(X_1, X_2)$ such that $X_1 \sim \dP_1$ and $X_2 \sim \dP_2$. 

\begin{proposition}\label{prop:tree_coupling}
There exists a constant $c \geq 0$ such that for any $h \geq 0$, $x \in V_1$, we have
 \begin{equation}
    \dtv\left( \cL((G, x)_h), \cL((T, x)_h) \right) \leq c \log(n)^2 \frac{d^{4h+3}}{n \wedge \sqrt{\alpha}}. \notag 
 \end{equation}
\end{proposition}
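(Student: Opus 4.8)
The plan is to couple the breadth-first exploration process of $(G,x)_h$ described above with the natural breadth-first construction of the Galton-Watson tree $(T,x)_h$, step by step, and to bound the total-variation distance by the probability that the coupling fails at some stage before depth $h$. The coupling is built on the filtration $\cF_t$: at each step, when we reveal $N_t$, we want to match its size and the labels of its elements with the offspring distribution of the corresponding vertex in $T$. There are two cases, corresponding to the two types of vertices.

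First I would treat the discrepancy in \emph{offspring counts}. When $x_t \in V_1$ (even depth), the number of children of $x_t$ in $T$ is $\Poi(d^2)$, whereas in $G$ the number of newly discovered right-vertices with a further neighbour is (conditionally on $\cF_t$) distributed as $\Bin(m - O(|(\bar G,x)_h|), p\tilde p')$ for an appropriate conditional survival probability $\tilde p'$; the law $\mathrm{Bin}(m,p\tilde p)$ has mean $mp\tilde p = \tilde d_1 = d^2 + O(\sqrt\alpha)$. So there are three sources of error: (a) binomial-vs-Poisson, handled by the standard Le Cam bound $\dtv(\Bin(N,q),\Poi(Nq)) \leq Nq^2$, which here is $O(mp^2\tilde p^2) = O(d^4/m) = O(d^4/(n\sqrt\alpha))$ per vertex; (b) the shift in the mean from $\tilde d_1$ to $d^2$, of size $O(\sqrt\alpha)$, which after dividing by the Poisson parameter $d^2$ and using that $\alpha = n^\eta$, $m = n^{1+\eta}$, contributes $O(\sqrt\alpha/m) = O(1/(n\sqrt\alpha))$ per vertex (one should be slightly careful and phrase this as a coupling of a $\Poi(\tilde d_1)$ with a $\Poi(d^2)$, costing $|\tilde d_1 - d^2| = O(\sqrt\alpha)$, then divide by nothing — actually the honest statement is $\dtv(\Poi(a),\Poi(b)) \leq |a-b|$, and $|a-b| = O(\sqrt\alpha)$, but this is absorbed because $\sqrt\alpha \ll m/\log^2 n$ up to the stated rate; I would double-check the exact bookkeeping here); (c) the depletion of the vertex pool, i.e. using $m - |(\bar G,x)_h|$ instead of $m$, which costs $O(|(\bar G,x)_h| p) = O(|(\bar G,x)_h| d^2/m)$ per vertex. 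When $x_t \in V_2$ (odd depth), $T$ prescribes exactly one child, while in $G$ the number of further neighbours of $x_t$ in $V_1$ is $\Bin(n,p)$ conditioned to be $\geq 1$; this differs from the constant $1$ precisely when $\Bin(n,p) \geq 2$, an event of probability $\leq n^2p^2 = d^2/m$, again $O(d^2/(n\sqrt\alpha))$ per vertex. For the \emph{labels}: each new even-depth vertex gets a fresh $\Unif([n])$ label in $T$, whereas in $G$ the labels of previously unexplored vertices are uniform on $[n] \setminus \{\text{explored labels}\}$, costing $O(|(\bar G,x)_h|/n)$ in total variation per discovered vertex; similarly $O(|(\bar G,x)_h|/m)$ for right-vertices.

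Then I would assemble these per-vertex bounds. Conditioning on the good event of Proposition \ref{prop:neighbourhood_size_bounds} (and on tangle-freeness via Proposition \ref{prop:locally_tree_like}), the total number of vertices explored up to depth $h$ is at most $c\log(n) d^{2h}$, and $|(\bar G,x)_h| \leq c\log(n)d^{2h}$ throughout. Summing the per-vertex error $O\big(d^4/(n\sqrt\alpha) + |(\bar G,x)_h|/(n\wedge\sqrt\alpha)\big) = O\big(\log(n) d^{2h}/(n\wedge\sqrt\alpha)\big)$ over the $\leq c\log(n)d^{2h}$ explored vertices gives $O\big(\log(n)^2 d^{4h}/(n\wedge\sqrt\alpha)\big)$, which is the claimed bound; the contribution from being outside the good event is $\leq 1/n + cd^{4h+4}/n$, of smaller or comparable order, and is absorbed into the constant. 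One subtlety is that I am coupling $(G,x)_h$ with $(T,x)_h$ but the offspring bounds above are naturally stated for $\bar G$; I would first argue that $(G,x)_h$ and $(\bar G,x)_h$ agree as labeled rooted graphs with the required probability (degree-one right-vertices of $G$ correspond exactly to the single-child right-vertices of $T$ being ``pruned'' — in fact the definition of $T$ with exactly one child at odd depth already encodes that we only keep right-vertices of degree $\geq 2$, so the comparison is really between $\bar G$ and $T$, and degree-$1$ vertices of $G$ are irrelevant to both $B$ and the statement).

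\textbf{Main obstacle.} The routine parts are Le Cam / coupling estimates; the delicate point is the bookkeeping of the mean shift $\tilde d_1 - d^2 = O(\sqrt\alpha)$ and making sure it does not degrade the rate. Because $\sqrt\alpha$ can be polynomially large in $n$ (e.g. $\alpha = n$), one cannot naively bound $\dtv(\Poi(\tilde d_1),\Poi(d^2)) \leq |\tilde d_1 - d^2|$ and sum — that would give a useless bound. The resolution is that this $O(\sqrt\alpha)$ discrepancy is \emph{not} a total-variation cost of the right form: one must instead track it through the comparison of the random survival probability $\tilde p$ (and its conditional version $\tilde p'$) with its idealized value, using that $\tilde p = 1 - (1-p)^n = np(1 + O(np)) = np + O(n p^2)$ exactly, so that $mp\tilde p = mnp^2 + O(m n^2 p^3) = d^2 + O(d^3/\sqrt\alpha)$ — i.e. the error is really $O(d^3/\sqrt\alpha)$, not $O(\sqrt\alpha)$ — and dividing by the Poisson parameter $d^2$ yields a \emph{relative} error $O(d/\sqrt\alpha)$ per vertex, which when multiplied through is controlled. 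Getting this chain of approximations to land on exactly $d^{4h}/(n\wedge\sqrt\alpha)$ rather than something slightly worse is where all the care is needed; everything else is a union bound over the $O(\log(n)d^{2h})$ explored vertices.
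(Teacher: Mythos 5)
Your proposal follows essentially the same route as the paper's proof: a step-by-step coupling of the breadth-first exploration with the Galton--Watson construction, per-step total-variation costs from the binomial-to-Poisson approximation, the mean shift $|m_t p\tilde p_t - d^2|$, the depletion of the vertex pool, and the conditioned-binomial-versus-one comparison at odd depths, followed by the with/without-replacement bound for the labels and an assembly over the $O(\log(n)d^{2h})$ explored vertices on the good event of Propositions \ref{prop:locally_tree_like} and \ref{prop:neighbourhood_size_bounds}. Your resolution of the mean-shift bookkeeping (the error is $O(d^3/\sqrt{\alpha})$, not $O(\sqrt{\alpha})$) is exactly how the paper's estimate works, and the remaining discrepancies in your write-up are only in inessential powers of $d$ that are absorbed into the final constant.
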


\begin{proof}
We proceed in two steps: we first couple the unlabeled versions of the graphs and then move on to the labels. We shall use the following classical bounds for total variation distances:
\begin{equation}\label{eq:total_variation_poisson}
    \dtv(\Bin(n, p), \Poi(np)) \leq p \quand \dtv(\Poi(\lambda), \Poi(\lambda')) \leq |\lambda - \lambda'|.
\end{equation}

Firstly, from Proposition \ref{prop:locally_tree_like}, $(G, x)_h$ is a tree with probability $1 - c d^{2h+2}/n$, so we only need to couple the offspring distributions. Consider the exploration process $(x_t)_{t \geq 0}$ defined above; at each step $t$, we let $n_t$ (resp. $m_t$) be the number of vertices in $V_1 \setminus \bigcup_{t' \leq t} A_{t'}$ (resp. $V_2 \setminus \bigcup_{t' \leq t} A_{t'}$). We aim to couple it with the same exploration process $(x'_t)_{t \geq 0}$ on $(T, x)$. Let $\dP_t$ (resp. $\dQ_t$) be the offspring distribution of $x_t$ (resp. $x'_t$). By definition, we have an explicit definition for $\dP_t$ and $\dQ_t$:
\begin{itemize}
    \item if $x_t$ has even depth, $\dP_t = \Bin(m_t, p \tilde p_t)$ where $\tilde p_t = \dP(\Bin(n_t, p) \geq 1)$, and $\dQ_t = \Poi(d^2)$;
    \item if $x_t$ has odd depth, $\dP_t = \Bin(n_t, p)$ conditioned on being at least 1, and $\dQ_t = 1$ a.s.
\end{itemize}
We begin with the second case, as it is the simplest. As before, $\dP_t$ is dominated by a distribution equal to $1 + \Bin(n, p)$, and hence
\begin{equation}
    \dP_t(|N_t| > 1) \leq \dP(\Bin(n, p) \geq 1) \leq np = d\alpha^{-1/2}. \notag 
\end{equation}
We now move to the first case. We have
\begin{equation}
    \tilde p_t = 1 - (1 - p)^{n_t} = p n_t + O(p^2 n^2) = O(np). \notag 
\end{equation}
Using the bounds in eq. \eqref{eq:total_variation_poisson},
\begin{equation}
    \dtv(\dP_t, \dQ_t) \leq p \tilde p_t + |m_t p \tilde p_t - d^2| \leq \frac{c d^2}{m} + |m_t n_t p^2 - d^2| + \frac{cd^3}{\sqrt \alpha}. \notag 
\end{equation}
The middle term can be further decomposed by noticing that $nmp^2 = d^2$:
\begin{equation}
    |m_t n_t p^2 - d^2| \leq |m_t - m|np^2 + m|n_t - n|p^2   \leq \frac{2d^2 |(G, x)_{2h}|}{n} \notag 
\end{equation}
All in all, we showed that for any $t \geq 0$,
\begin{equation}\label{eq:dTvPtQt}
    \dtv(\dP_t, \dQ_t) \leq cd^2 \left(\frac{|(G, x)_{2h}|}{n} \vee \frac{d}{\sqrt \alpha} \right).
\end{equation}
 Let $\tilde \cL$ denote the unlabeled distribution of the graphs.
By the high-probability bounds of Proposition \ref{prop:neighbourhood_size_bounds}, we obtain
\begin{align*}
    \dtv\left( \tilde \cL((G, x)_{2h}), \tilde \cL((T, x)_{2h}) \right) &\leq  \underbrace{ \frac{cd^{2h+2}}{n}  }_{\text{$(G, x)_h$ is not a tree}} + \underbrace{ \frac1n }_{\text{Prop. \ref{prop:neighbourhood_size_bounds} does not hold}}  \\
    &+c \log(n) ^2d^{2h+2} \left( \frac{d^{2h}}{n} \vee d\alpha^{-1/2} \right) \\
    &\leq c' \log(n)^2 \frac{d^{4h+3}}{n \wedge \sqrt{\alpha}}, 
\end{align*} 
where in the third part of the first inequality, we apply \eqref{eq:dTvPtQt} with a union bound over all vertices in $(G,x)_{2h}$.  This bound is lower than the one in Proposition \ref{prop:tree_coupling} since $d > 1$.

Consider now the distribution of the labels. Given a coupling between the unlabeled versions of $(G, x)_{2h}$ and $(T, x)_{2h}$, the labels of the even (resp. odd) depth vertices of $(G, x)_{2h}$ are obtained by sampling without replacement from $[n]$ (resp. $[m]$), whereas those of $(T, x)_{2h}$ are sampled with replacement. From \cite{freedman_2012_remark}, the total variation distance between sampling with and without replacement $k$ elements from a population of $N$ elements is bounded above by $k^2/N$, and hence
\begin{equation}
    \dtv(\dP, \dQ) \leq 2\frac{|(G, x)_{2h}|^2}{n}, \notag 
\end{equation}
where $\dP, \dQ$ are the labels distributions for $(G, x)_{2h}, (T, x)_{2h}$, respectively. A final application of Proposition \ref{prop:neighbourhood_size_bounds} completes the proof.
\end{proof}

\subsection{Quantitative bounds for local functionals}

The bounds obtained in Proposition \ref{prop:tree_coupling} can be viewed as a quantitative version of the local convergence of Benjamini and Schramm \cite{benjamini.schramm_2011_recurrence}, for possibly diverging depth. It is known that this coupling also implies weak convergence in the probability measure sense, i.e., the convergence of graph functionals. We now provide a quantitative version of this weak convergence, but only for a specific class of functionals, called \emph{local} functionals: 

\begin{definition}[Local functionals]
    A functional $f: \mathcal G_* \to \mathbb R$ is said to be $t$-local if $f(G,o)$ is only a function of $(G,o)_t$.
\end{definition}

\begin{proposition}\label{prop:functional_concentration}
 Let $f: \cG_\star \to \dR$ be a $2h$-local function for some $h \geq 0$, such that $f(g, o) \leq a |(g, o)_{2h}|^b$ for some $a, b$. Then with probability at least $1-n^{-1}$,
 \begin{equation}
    \left| \sum_{x \in V_1} f(G, x) - \sum_{x \in [n]} \E*{f(T, x)} \right| \leq ca\log(n)^{3/2 + b}\frac{d^{2h(1+b)}}{\sqrt{n} \wedge \alpha^{1/4}}. \notag 
 \end{equation}
\end{proposition}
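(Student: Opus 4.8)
The plan is to combine the quantitative local coupling of Proposition \ref{prop:tree_coupling} with a concentration argument (a bounded-difference / Efron–Stein inequality) for the sum $F := \sum_{x\in V_1} f(G,x)$. I would split the error into a bias term and a fluctuation term: writing $\E F$ for the expectation of the sum in $G$, I bound $|F - \E F|$ by concentration, and $|\E F - \sum_x \E f(T,x)|$ by the coupling estimate. For the bias term, fix $x$ and couple $(G,x)_{2h}$ with $(T,x)_{2h}$ optimally; on the coupling event the contributions agree, so
\begin{equation}
    \bigl| \E f(G,x) - \E f(T,x) \bigr| \leq 2 \sup \{|f(g,o)| : \text{relevant } g_\star\} \cdot \dtv\bigl(\cL((G,x)_{2h}), \cL((T,x)_{2h})\bigr) + (\text{tail terms}),
\end{equation}
where the tail terms handle the event that $|(G,x)_{2h}|$ or $|(T,x)_{2h}|$ is atypically large (here the moment bounds in Proposition \ref{prop:neighbourhood_size_bounds}, together with an analogous and easier bound for the Galton–Watson tree, control $\E[a|(g,o)_{2h}|^b \ind_{\text{large}}]$). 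Multiplying by $n$ and plugging in the bound $\dtv \leq c\log(n)^2 d^{4h}/(n\wedge\sqrt\alpha)$ gives a contribution of order $n \cdot d^{4h}/(n\wedge\sqrt\alpha)$ times polylog and the typical value $d^{2hb}$ of $|(G,x)_{2h}|^b$, which is dominated by the claimed bound.

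For the fluctuation term, I would view $F$ as a function of the independent entries $X_{ij}$ of the Bernoulli matrix and apply a bounded-difference inequality. Changing one entry $X_{ij}$ can only affect $f(G,x)$ for those $x$ whose $2h$-neighborhood contains the edge $\{i,j\}$; since $f$ is $2h$-local and bounded by $a|(G,x)_{2h}|^b$, the resulting change is controlled by $\sum_{x : \{i,j\}\in(G,x)_{2h}} a|(G,x)_{2h}|^b$. To make this rigorous one uses that the number of left vertices at distance $\leq 2h$ from a fixed edge is, with high probability, at most a polylog factor times $d^{2h}$ (again Proposition \ref{prop:neighbourhood_size_bounds}), and that on this high-probability event each such term is at most $a(c\log n\, d^{2h})^b$. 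Summing the squared per-coordinate sensitivities over all $(i,j)$ and invoking a McDiarmid-type inequality (or the Efron–Stein variance bound followed by a Bernstein argument, as is standard in this literature) yields a deviation of order $a\log(n)^{3/2+b} \sqrt{n}\, d^{2h(1+b)}$ with probability $1-n^{-1}$; dividing and multiplying shows this matches the $n \cdot d^{2h(1+b)}/(\sqrt n \wedge \alpha^{1/4})$ form since $\sqrt n = n/\sqrt n$.

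The main obstacle I anticipate is controlling the sensitivities \emph{uniformly and with a good tail}: the naive bound on how many $x$ have $\{i,j\}$ in their $2h$-neighborhood, and on the sizes $|(G,x)_{2h}|$, is random and heavy-tailed (a single high-degree right vertex can inflate several neighborhoods simultaneously), so a crude worst-case application of McDiarmid is too lossy. The fix is to run the concentration argument on the high-probability event of Proposition \ref{prop:neighbourhood_size_bounds} where all neighborhoods are polylog $\cdot\, d^{2h}$, and to absorb the (polynomially small) probability of the bad event into the $n^{-1}$ slack, exactly as in \cite{bordenave2018nonbacktracking}; one also needs the matching moment control $\E f(T,x)$ for the limiting tree, which follows from the explicit offspring distributions in Definition \ref{def:gw_tree} (Poisson and deterministic) by a direct computation. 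Balancing the exponent of $\log n$ — the $3/2$ coming from the McDiarmid/union-bound steps and the extra $b$ from the size bound raised to the power $b$ — is the bookkeeping that produces the stated $\log(n)^{3/2+b}$ factor.
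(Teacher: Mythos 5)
Your decomposition into a bias term ($\mathbb{E} f(G,x)$ versus $\mathbb{E} f(T,x)$) and a fluctuation term ($\sum_x f(G,x)$ versus its expectation) is exactly the paper's, and the fluctuation step is handled the same way: the paper simply imports the concentration estimate of Theorem 12.5 of \cite{bordenave2020detection}, whose proof is the truncated bounded-difference/martingale argument you sketch, run on the high-probability event of Proposition \ref{prop:neighbourhood_size_bounds}. Where you diverge is the bias term. The paper applies Cauchy--Schwarz, $|\mathbb{E} f(G,x)-\mathbb{E} f(T,x)|\leq\sqrt{\mathbb{P}(\cE(x))}\,\bigl(\mathbb{E}[f(G,x)^2]^{1/2}+\mathbb{E}[f(T,x)^2]^{1/2}\bigr)$, thereby paying the \emph{square root} of the total-variation bound of Proposition \ref{prop:tree_coupling} --- this is precisely why the final denominator is $\sqrt n\wedge\alpha^{1/4}$ rather than $n\wedge\sqrt\alpha$. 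You instead pay the full TV distance times a truncated supremum of $|f|$, plus a tail term. That works, but your assertion that the resulting $n\,d^{2h(2+b)}/(n\wedge\sqrt\alpha)$ (up to polylogs) ``is dominated by the claimed bound'' needs one more line: the ratio of your term to the stated one is of order $d^{2h}/(\sqrt n\wedge\alpha^{1/4})$, which exceeds $1$ exactly in the regime where the coupling estimate is vacuous ($\dtv\geq 1$); there one should instead note that the claimed right-hand side already exceeds $n$ times the truncated supremum of $|f|$, so the inequality is trivial. With that case distinction, plus the tail control you mention (Propositions \ref{prop:neighbourhood_tail_bound} and \ref{prop:neighbourhood_size_bounds}, whose analogues for the Galton--Watson tree are immediate from the Poisson offspring law), your route yields the statement; it is marginally sharper than the paper's in the main bias term when $\dtv$ is small, at the price of the truncation bookkeeping that Cauchy--Schwarz avoids.
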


\begin{proof}
Notice that the bounds on the neighborhood sizes match exactly the ones of \cite{bordenave2020detection}, and hence we can borrow the following concentration result from it (see the proof of Theorem 12.5 in \cite{bordenave2020detection}): with probability at least $1 - n^{-1}$,
\begin{equation}\label{eq:graph_concentration_bound}
    \left| \sum_{x \in V_1} f(G, x) - \sum_{x \in [n]} \E*{f(G, x)} \right| \leq c a \log(n)^{3/2 + b}  d^{2h(1 + b)} \sqrt{n}.
\end{equation}
On the other hand, we have for any $x \in V_1$
\begin{equation}
    \left|\dE f(T, x) - \dE f(G, x) \right| =  \E*{\left|f(T, x) - f(G, x) \right| \ind_{\cE(x)}}, \notag 
\end{equation}
where $\cE(x)$ denotes the event of the coupling between $(G, x)_{2h}$ and $(T, x)_{2h}$ fails. From the Cauchy-Schwarz inequality,
\begin{align*}
    \left|\dE f(T, x) - \dE f(G, x) \right| &\leq \sqrt{\Pb*{\cE(x)}} \sqrt{\E*{(f(T, x) - f(G, x))^2}} \\
    &\leq \sqrt{\dtv\left( \cL((G, x)_h), \cL((T, x)_h) \right)} \left( \sqrt{\E*{|f(G, x)|^2}} + \sqrt{\E*{|f(T, x)|^2}} \right).
\end{align*}
Proposition \ref{prop:tree_coupling} bounds the first factor.
It is easy to check that the bounds for $|(G, x)_{2h}|$ in Proposition \ref{prop:neighbourhood_size_bounds} also apply to $|(T, x)_{2h}|$, which yields
\begin{equation}\label{eq:tree_graph_exp_bound}
    \left|\dE f(T, x) - \dE f(G, x) \right| \leq c a \log(n)    d^{2b h+1} \frac{ d^{2h}}{\sqrt{n}\wedge \alpha^{1/4}} .
\end{equation}
Combining \eqref{eq:graph_concentration_bound} and \eqref{eq:tree_graph_exp_bound} yields the desired bound.
\end{proof}

\section{From tree functionals to pseudo-eigenvectors} \label{sec:tree_pseudo}

Since we now have a way to translate quantities from the Galton-Watson tree $T$ to the graph $G$, we aim to apply these results to specific functionals. We begin with a few preliminaries: given a functional $f: \cG_\star \to \dR$, define its \emph{Galton-Watson transform} $\overline f$ as
\begin{equation}
    \overline f(x) = \E*{f(T, x)}. \notag 
\end{equation}
This definition is consistent since from Definition \ref{def:gw_tree}, the distribution of a Galton-Watson tree only depends on its root label $x$. We shall also need the following definition: given a matrix $W \in \dR^{n \times m}$, and a functional $f: \cG_\star \to \dR$, define $\partial_W f$ as
\begin{equation}
    \partial_W f(g, o) = \sum_{o', o'' \in \cN_2(o)} W_{\iota(o) \iota(o')} W_{\iota(o'') \iota(o')} f(g \setminus \{o, o'\}, o'') \notag 
\end{equation}
where $o', o''$ runs over the set of non-backtracking wedges starting from $o$. In particular, when $g$ is a rooted tree, $o'$ is uniquely determined by $o''$, and the graph $(g \setminus \{o, o'\}, o'')$ is the subtree rooted at $o''$.

The functionals we will consider are of the following form: for a vector $\phi$,
\begin{equation}\label{eq:def_tree_functional}
    f_{\phi, t}(g, o) = \left( \frac{mn}{d^2} \right)^t\sum_{o_1, \dots, o_{2t}} \left(\prod_{s=0}^{t-1} M_{\iota(o_{2s}), \iota(o_{2s+1})}M_{\iota(o_{2s+2}), \iota(o_{2s+1})}  \right) \phi_{\iota(o_{2t})}
\end{equation}
where the sum runs over all non-backtracking paths of length $2t$ starting from $o$. Then, the following proposition holds:
\begin{proposition}\label{prop:functional_expectations}
 For any $t \geq 0$, and $i, j \in [r]$,
 \begin{align*}
    \overline{f_{\phi_i, t}} &= \nu_i^{2t} \phi_i, \\
    \overline{f_{\phi_i, t} f_{\phi_j, t}} &= (\nu_i \nu_j)^{2t} \left(\sum_{s=0}^{t} \frac{\Phi^s}{(\nu_i \nu_j d)^{2s}} \right) (\phi_i \circ \phi_j)
 \end{align*}
 and if $F_{\phi_i, t} = (f_{\phi_i, t+1} - \nu_i^2 f_{\phi_i, t})^2$, then
 \begin{equation}
    \overline{F_{\phi_i, t}} = \frac{\Phi^t (\phi_i \circ \phi_i)}{d^{2t}} \notag 
 \end{equation}
 Additionally, for any functional $f: \cG_\star \to \dR$, and any weight matrix $W$,
 \begin{equation}
    \overline{\partial_W f} = \frac{d^2(WW^*)}{mn} \,\overline{f}. \notag 
 \end{equation}
\end{proposition}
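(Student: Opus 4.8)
The plan is to use the fact that, in the bipartite Galton--Watson tree $T$ of Definition \ref{def:gw_tree}, every non-backtracking walk issued from the root travels strictly \emph{downward}: its first step goes to a child $o'$ of the root, its second step to the \emph{unique} child $o''$ of $o'$, its third to a child of $o''$, and so on. Consequently the subtree hanging below $o''$ is a fresh copy of the bipartite GW tree rooted at the label $\iota(o'')\sim\Unif([n])$, independent of everything already explored. This gives the recursion $f_{\phi,t}=\tfrac{mn}{d^2}\,\partial_M f_{\phi,t-1}$ on trees, with base case $f_{\phi,0}(g,o)=\phi_{\iota(o)}$: a length-$2t$ non-backtracking walk from the root is exactly a choice of child $o'$ together with a length-$2(t-1)$ downward walk from $o''$ inside the corresponding subtree, and the weight splits off precisely one factor $M_{\iota(o)\iota(o')}M_{\iota(o'')\iota(o')}$.

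I would prove the $\partial_W$ identity first, as it is a single-step computation underlying everything else. For a local functional $f$ and weight matrix $W$ one has $\partial_W f(T,o)=\sum_{o'}W_{\iota(o)\iota(o')}W_{\iota(o'')\iota(o')}f(T_{o''},o'')$, the sum over children $o'$ of $o$. Conditioning on the number of children (Poisson, mean $d^2$), then averaging $\iota(o')\sim\Unif([m])$ and $\iota(o'')\sim\Unif([n])$ independently, and using that the conditional expectation of $f(T_{o''},o'')$ given $\iota(o'')$ is $\overline f(\iota(o''))$, the product $W_{\iota(o)\iota(o')}W_{\iota(o'')\iota(o')}$ collapses to $\tfrac1{mn}(WW^{*})_{\iota(o)\iota(o'')}$, and summing over $\iota(o'')$ yields $\overline{\partial_W f}=\tfrac{d^2}{mn}(WW^{*})\overline f$. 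Iterating this with $W=M$ gives $\overline{f_{\phi,t}}=(MM^{*})^{t}\phi=P^{t}\phi$, and specializing $\phi=\phi_i$ with $P\phi_i=\nu_i^{2}\phi_i$ yields $\overline{f_{\phi_i,t}}=\nu_i^{2t}\phi_i$.

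For the second moment I would peel one level off \emph{both} walks simultaneously, so that $f_{\phi_i,t}f_{\phi_j,t}(T,x)$ becomes a double sum over ordered pairs $(o_1',o_2')$ of children of the root. When $o_1'\neq o_2'$ the two subtrees $T_{o_1''}$, $T_{o_2''}$ lie in different branches and are therefore independent; the Poisson factorial moment $\mathbb{E}[N(N-1)]=d^4$, independence, and the first-moment formula just proved combine to show this part contributes exactly $(\nu_i\nu_j)^{2t}(\phi_i\circ\phi_j)$. When $o_1'=o_2'=o'$ the two weights coincide and $M_{x\iota(o')}^{2}M_{\iota(o'')\iota(o')}^{2}=\tfrac1{mn}Q_{x\iota(o')}Q_{\iota(o'')\iota(o')}$, so the same conditioning-and-averaging step pulls out a factor $\tfrac1{d^2}\Phi$ in front of $\overline{f_{\phi_i,t-1}f_{\phi_j,t-1}}$. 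This gives the recursion
\[
\overline{f_{\phi_i,t}f_{\phi_j,t}}=(\nu_i\nu_j)^{2t}(\phi_i\circ\phi_j)+\frac{\Phi}{d^2}\,\overline{f_{\phi_i,t-1}f_{\phi_j,t-1}},
\]
with base case $\overline{f_{\phi_i,0}f_{\phi_j,0}}=\phi_i\circ\phi_j$; unrolling it produces the stated closed form. For the variance identity I would expand the square $F_{\phi_i,t}=(f_{\phi_i,t+1}-\nu_i^{2}f_{\phi_i,t})^{2}$, insert the closed forms for $\overline{f_{\phi_i,t+1}^{2}}$, $\overline{f_{\phi_i,t}^{2}}$ and for the mixed moment $\overline{f_{\phi_i,t+1}f_{\phi_i,t}}$ (obtained by the identical one-level peeling, which satisfies the same recursion up to one extra factor $\nu_i^{2}$ per step), whereupon a telescoping cancellation wipes out all but a single residual term proportional to $\Phi^{s}(\phi_i\circ\phi_i)/d^{2s}$, which is the asserted value of $\overline{F_{\phi_i,t}}$.

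The step I expect to require the most care is the two-walk bookkeeping: one must check that downward walks from the root are in bijection with pairs consisting of a child $o'$ of the root and a downward walk from its grandchild $o''$ \emph{inside the pruned subtree} $(g\setminus\{o,o'\},o'')$ — exactly the object appearing in $\partial_W$; that in the $o_1'\neq o_2'$ case the two tails are genuinely independent fresh GW trees with $\Unif([n])$ root labels; and that the alternation of label sets $[m]$ and $[n]$ is tracked so that averaging over labels yields $WW^{*}$ rather than a transpose or $W^{*}W$. Finiteness of every expectation (needed to interchange sums and expectations freely) holds for fixed $t$ by the polynomial moment bounds on neighborhood sizes in Proposition \ref{prop:neighbourhood_size_bounds}.
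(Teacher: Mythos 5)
Your proposal is correct, but it takes a different route from the paper. The paper does not compute anything on the bipartite tree directly: it observes that, because every odd-depth vertex of $T$ has exactly one child, the functional $f_{\phi,t}$ on the bipartite tree is equal in distribution to the analogous functional on an ordinary $\Poi(d^2)$ Galton--Watson tree with $\Unif([n])$ labels and random edge weights $\tilde W_{xy}=\frac{mn}{d^2}M_{xZ}M_{yZ}$, $Z\sim\Unif([m])$; it then identifies the associated signal matrix as $P$ and variance matrix as $\Phi/d^2$ and imports all four identities wholesale from Propositions 6 and 7 of \cite{stephan2020non}. Your argument instead reproves those identities from scratch by peeling one generation at a time: the $\partial_W$ identity via conditioning on the Poisson offspring and averaging the two independent uniform labels (correctly producing $WW^*$ rather than $W^*W$), the first moment by iterating it with $W=M$, the second moment by splitting the pair sum over children into the off-diagonal part (independent branches, factorial moment $\dE[N(N-1)]=d^4$) and the diagonal part (where $M_{xz}^2M_{yz}^2=\frac{1}{mn}Q_{xz}Q_{yz}$ produces the $\Phi/d^2$ factor), and the variance by telescoping. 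The underlying combinatorial observation --- that non-backtracking walks from the root are strictly descending, so a length-$2t$ walk is a child $o'$ plus a walk in the fresh subtree at $o''$ --- is exactly the fact that makes the paper's distributional reduction valid, so the two proofs rest on the same structural insight; yours buys self-containedness at the cost of redoing the recursions, while the paper's buys brevity at the cost of an external dependency.

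One point your computation would actually surface: carrying the telescoping through gives $\overline{F_{\phi_i,t}}=\Phi^{t+1}(\phi_i\circ\phi_i)/d^{2t+2}$ rather than $\Phi^{t}(\phi_i\circ\phi_i)/d^{2t}$ (check $t=0$: $f_{\phi_i,1}-\nu_i^2 f_{\phi_i,0}$ is centered with variance $\Phi(\phi_i\circ\phi_i)/d^2$, not $\phi_i\circ\phi_i$). This is an index-shift in the statement as written, presumably inherited from the conventions of \cite{stephan2020non}, not a flaw in your method; but since you assert the residual term "is the asserted value" without pinning down the exponent, you should make the exponent explicit and reconcile it with the statement.
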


\begin{proof}
We proceed by reduction to \cite{stephan2020non}. Let $\tilde T$ be an ordinary Galton-Watson tree with offspring distribution $\Poi(d^2)$, such that each vertex has a random label $\Unif([n])$, and define the random weights
\begin{equation}
    \tilde W_{xy} = \frac{mn}{d^2} M_{xZ}M_{yZ}, \quad Z \sim \Unif([m]) \notag 
\end{equation}
The equivalent tree functional $\tilde f_{\phi, t}$ is defined as
\begin{equation}
    \tilde f_{\phi, t}(g, o) = \sum_{o_1, \dots, o_{t}} \left(\prod_{s=0}^{t-1} \tilde W_{\iota(o_{s}), \iota(o_{s+1})}  \right) \phi_{\iota(o_{t})}. \notag 
\end{equation}
It is easy to see that
\begin{equation}
    f_{\phi, t}(T, x) \stackrel{d}{=} \tilde f_{\phi, t}(\tilde T, x).\notag 
\end{equation}
On the other hand, the process $\tilde f$ on $(\tilde T, x)$ was already studied in depth in \cite{stephan2020non}. It corresponds to the signal matrix
\begin{equation}
    \tilde Q = \frac{d^2 \ind \ind^\top}{n} \circ \left(\frac{mn}{d^2} \cdot \frac{MM^*}{m} \right) = P\notag 
\end{equation}
and the variance matrix
\begin{equation}
    \tilde K = \frac{d^2 \ind \ind^\top}{n} \circ \left( \frac{m^2 n^2}{d^4} \frac{(M \circ M)(M \circ M)^*}{m} \right) = \frac{\Phi}{d^2}. \notag 
\end{equation}
Proposition \ref{prop:functional_expectations} then ensues from Propositions 6 and 7 in \cite{stephan2020non}.
\end{proof}

\subsection{Pseudo-eigenvectors}\label{sec:pseudo_eigenvectors}

We now make use of Proposition \ref{prop:functional_concentration} to translate the expectations on $T$ to pseudo-eigenvectors of $B$. This corresponds to Equations \eqref{eq:UU_VV}-\eqref{eq:UV_VBU} of Theorem \ref{thm:spectral_structure_Bl}. We shall prove stronger versions of those bounds in the following two lemmas and show how they imply the desired inequalities.
\begin{lemma}\label{lem:UV_UBV}
    Let $h = c \log_d(n)$. With probability at least $1 - c'd^4 n^{12c-1}$, for any $t \leq 3h$, we have
    \begin{equation}
        \left| \langle B^t \chi_i, \check \chi_j \rangle -  \nu_i^{2t+2} \delta_{ij}  \right| \leq c' \kappa^2\theta_2^{2t+2}d^{12h+4}\frac{\log (n)^{5/2}}{\sqrt n \wedge \alpha^{1/4}}. \notag 
    \end{equation}
\end{lemma}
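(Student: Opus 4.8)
The plan is to express the inner product $\langle B^t \chi_i, \check\chi_j\rangle$ as a sum over the graph $G$ of a local tree functional, apply the concentration result of Proposition~\ref{prop:functional_concentration}, and then evaluate the Galton-Watson expectation via Proposition~\ref{prop:functional_expectations}. First I would unfold the definitions: since $\chi_i = T\phi_i$, the vector $B^t\chi_i$ evaluated at an oriented two-path $e$ is, after expanding $B^t$, a weighted sum over non-backtracking walks of length $2t+2$ in $G$ emanating from the oriented two-path $e$, with each step carrying a factor $A_{f_1f_2}A_{f_3f_2}$. Recalling that $A = \tfrac{\sqrt{mn}}{d}(X\circ M)$, each such weight on a \emph{present} edge pair equals $\tfrac{mn}{d^2}M_{\cdot\cdot}M_{\cdot\cdot}$, which is exactly the weight appearing in the definition \eqref{eq:def_tree_functional} of $f_{\phi,t}$ --- the Bernoulli indicators $X$ are absorbed into the requirement that the walk stays in $E$. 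Pairing with $\check\chi_j = J_\Delta T\phi_j$ and summing over $\vec E$ then collapses the two ends: I would check that the result can be written as $\sum_{x\in V_1} \Delta_{\cdot} f_{\phi_i, t+1}$-type functional, or more precisely as a sum over $x$ of $f_{\phi_i, t+1}$ against $\phi_j$ localized at $x$, so that the whole quantity is $\sum_{x\in V_1} g(G,x)$ for a $(2t+2)$-local functional $g$ with polynomial growth $g(G,x)\le a|(G,x)_{2t+2}|^b$ (the bound on entries of $M$ coming from $L$, hence $\theta_2 = L/d$ entering; the $\kappa^2$ from $\|\phi_i\|_\infty\|\phi_j\|_\infty \le \kappa^2/n$).

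Next I would invoke Proposition~\ref{prop:functional_concentration} with $h$ replaced by $t+1 \le 3h+1$, which gives concentration of $\sum_{x} g(G,x)$ around $\sum_x \overline{g}(x) = \sum_x \E[g(T,x)]$ with the stated error $ca\log(n)^{3/2+b} n\, d^{2(t+1)(1+b)}/(\sqrt n\wedge\alpha^{1/4})$; tracking $a$ (which contains the $L$-factors, i.e. powers of $d\theta_2$) and $b$ (a small absolute constant, $b=1$ or $2$, coming from how many walk endpoints are free) produces the claimed $d^4\kappa^2\log(n)^{5/2} d^{12h} \theta_2^{2t+2}/(\sqrt n\wedge\alpha^{1/4})$ bound, and the failure probability $c'd^4 n^{12c-1}$ is just the union bound $n\cdot n^{-1}$ from the proposition together with the contribution of the bad events where neighborhood sizes exceed their typical $d^{2t}$ behavior (Propositions~\ref{prop:neighbourhood_size_bounds} and~\ref{prop:locally_tree_like}), whose probabilities scale like $d^{4h}/n$. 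For the main term, Proposition~\ref{prop:functional_expectations} gives $\overline{f_{\phi_i,t+1}} = \nu_i^{2t+2}\phi_i$, and the diagonal pairing $\langle \overline{f_{\phi_i,t+1}}, \phi_j\rangle$ over the $n$ roots yields $n\nu_i^{2t+2}\langle\phi_i,\phi_j\rangle = n\nu_i^{2t+2}\delta_{ij}$ by orthonormality --- exactly the asserted leading term. I would also need the identity $\overline{\partial_W f} = \tfrac{d^2(WW^*)}{mn}\overline f$ from that proposition to handle the fact that $\check\chi_j$ carries one extra $\Delta$-weight (hence the shift from $2t$ to $2t+2$ and the extra $\nu^2$).

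The main obstacle I expect is the bookkeeping in the first step: translating $\langle B^t\chi_i,\check\chi_j\rangle$ into a \emph{genuinely local, polynomially-bounded} graph functional of the form required by Proposition~\ref{prop:functional_concentration}. The operator $B$ sums over non-backtracking continuations, but when one iterates $B^t$ the walks can intersect the already-explored neighborhood, so the expansion is over walks that are only non-backtracking, not self-avoiding; one must argue (using tangle-freeness, Proposition~\ref{prop:locally_tree_like}, on the relevant event) that the non-tree contributions are negligible and that on the tree-like event the functional genuinely depends only on $(G,x)_{2t+2}$. A secondary subtlety is matching the normalization: the factor $(mn/d^2)^t$ in \eqref{eq:def_tree_functional} must be reconciled with the $\tfrac{\sqrt{mn}}{d}$ scaling in each entry of $A$ and with the weights $A_{f_1f_2}A_{f_3f_2}$ in $B$ and $\Delta$; this is routine but error-prone. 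Once the functional is correctly identified and the event $\{(G,x)_{2t+2}$ tangle-free for all $x\}$ is intersected in, the rest is a direct application of the two cited propositions with careful propagation of the parameters $L$, $\kappa$, $d$, $\log n$ into $a$ and $b$.
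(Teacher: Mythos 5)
Your proposal follows essentially the same route as the paper's proof: the paper defines the functional $f(g,o) = \ind_{(g,o)_t \text{ tangle-free}}\,\phi_j(o)\, f_{\phi_i,t+1}(g,o)$, bounds it by $2\kappa^2 n^{-1}\theta_2^{2t+2}|(g,o)_{t+1}|$ using the tangle-free control on the number of non-backtracking paths, and then applies Propositions~\ref{prop:functional_concentration} and~\ref{prop:functional_expectations} exactly as you describe. The only point to tighten is the leading term: $\sum_x \phi_j(x)\nu_i^{2t+2}\phi_i(x) = \nu_i^{2t+2}\delta_{ij}$ by orthonormality, without an extra factor of $n$.
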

\begin{proof} 
    Consider the following functional:
    \begin{equation}
        f(g, o) = \ind_{(g, o)_t \text{ is tangle-free}} \phi_j(o) f_{\phi_i, t+1}(g, o), \notag 
    \end{equation}
    where $f_{\phi_i, t}$ was defined in \eqref{eq:def_tree_functional}. It is easy to check that $f$ is $(2t+2)$-local, and when $(g, o)$ is $3h$-tangle-free (which happens with probability at least $1 - c'd^4 n^{12c-1}$ from Proposition \ref{prop:locally_tree_like}), there are at most two non-backtracking paths between $o$ and any vertex of $(g, o)$, hence from \eqref{eq:def_tree_functional}, \begin{equation}
        |f(g, o)| \leq 2 \frac{\kappa^2}{n} \theta_2^{2t+2} |(g, o)_{2t+2}|.
    \end{equation}
    We are, therefore, in the setting of Proposition \ref{prop:functional_concentration}. It remains to compute the corresponding quantities on $(G, x)$ and $(T, x)$. First, by definition,
    \begin{equation}
        \sum_{x\in[n]} f(G, x) = \langle B^t \chi_i, \check \chi_j \rangle, \notag 
    \end{equation}
    and by Proposition \ref{prop:functional_expectations} we have
    \begin{equation}
        \sum_{x\in[n]} f(T, x) = \sum_{x \in [n]} \phi_j(x) \nu_i^{2t+2} \phi_i(x) =\nu_i^{2t+2} \delta_{ij}, \notag 
    \end{equation}
    which completes the proof.
\end{proof}

Recall the definition of $\Gamma_{ij}^{(t)}$ from \eqref{eq:def_Gamma}. The following bounds hold:  
\begin{lemma}\label{lem:UU_VV}
    Let $h = c \log_d(n)$. With probability at least $1 - c'd^4 n^{4c-1}$, for any $t \leq h$, we have
    \begin{align}
        \left|\langle B^t \chi_i, B^t \chi_j \rangle - \nu_i^{2t}\nu_j^{2t}d^2\Gamma^{(t)}_{ij} \right| &\leq c' d^{6} \kappa^2 \log(n)^{7/2} \theta_2^{4t} \frac{d^{6t}}{\sqrt{n}\wedge \alpha^{1/4} }, \label{eq:BtBt} \\
        \left|\langle (B^*)^t \check \chi_i, (B^*)^t \check \chi_j \rangle - \nu_i^{2t+2}\nu_j^{2t+2}\left(\Gamma^{(t+1)}_{ij} - \delta_{ij} \right) \right| &\leq c' d^{6} \kappa^2 \log(n)^{7/2} \theta_2^{4t+4} \frac{d^{6t}}{\sqrt{n}\wedge \alpha^{1/4} }, \label{eq:B*tB*t}\\
        \left|\langle S_\Delta B^t  \chi_i, S_\Delta B^t  \chi_j \rangle - \nu_i^{2t+2}\nu_j^{2t+2} \Gamma^{(t+1)}_{ij} \right| &\leq c' d^6 \kappa^2 \log(n)^{7/2} \theta_2^{4t+4} \frac{d^{6t}}{\sqrt{n}\wedge \alpha^{1/4} }. \label{eq:SBtSBt}
    \end{align}
\end{lemma}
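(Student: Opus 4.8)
The plan is to recognize each of the three bilinear forms in Lemma~\ref{lem:UU_VV} as a sum over $V_1$ of a suitable local tree functional, and then invoke Proposition~\ref{prop:functional_concentration} together with the moment identities of Proposition~\ref{prop:functional_expectations}, exactly as in the proof of Lemma~\ref{lem:UV_UBV}. Concretely, for \eqref{eq:BtBt} I would write $\langle B^t\chi_i, B^t\chi_j\rangle = \sum_{x\in[n]} f_{\phi_i,t}(G,x)\, f_{\phi_j,t}(G,x)$ restricted to the tangle-free event, using that $[B^t\chi_i](e)$ counts non-backtracking continuations and that summing over $e$ with $e_1=x$ recovers $f_{\phi_i,t}(G,x)$ up to the tangle-free correction. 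For \eqref{eq:B*tB*t} the same holds with $\chi_i$ replaced by $\check\chi_i=J_\Delta\chi_i$ and $B$ by $B^*$, which by the parity-time symmetry \eqref{eq:parity_time} $J_\Delta B = B^* J_\Delta$ reduces to a walk functional of the shifted form $f_{\phi_i,t+1}-\nu_i^2 f_{\phi_i,t}$ evaluated on the tree — this is precisely why $F_{\phi_i,t}$ appears in Proposition~\ref{prop:functional_expectations}. For \eqref{eq:SBtSBt} one uses $S_\Delta T = AA^*$ and the identities \eqref{eq:matrix_relations} to rewrite $SB^t\check\chi_i$ as a vector indexed by $V_1$ whose entries are again of the $f_{\phi,t+1}$ type, and the inner product becomes $\sum_x$ of a product of two such functionals.

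Having identified the functionals, the main steps are: (1) verify $(2h_0)$-locality with $h_0 = 2t+2$ and establish the polynomial domination bound $f(g,o)\le a|(g,o)_{2h_0}|^b$ with $a = c\kappa^2 n^{-1}\theta_2^{\text{(appropriate power)}}$ and $b=2$ on the tangle-free event, where the factor $\kappa^2$ comes from bounding the two endpoint values $\phi_i(o_{2t}),\phi_j(o_{2t})$ by $\kappa/\sqrt n$ and the $\theta_2$ powers come from bounding each matrix entry $\sqrt{mn}|M_{xy}|\le L$ and collecting the normalization $(mn/d^2)^t$; (2) apply Proposition~\ref{prop:functional_concentration} to get concentration of $\sum_x f(G,x)$ around $\sum_x \overline f(x)$ with the stated error $\tfrac{d^{6t}}{\sqrt n\wedge\alpha^{1/4}}$ (the exponent $6t$ rather than $2t$ reflecting the product-of-two-functionals structure, since $h_0(1+b)$ with $h_0\sim 2t$, $b=2$ gives roughly $6t$); (3) compute $\overline f = \overline{f_{\phi_i,t}f_{\phi_j,t}}$ via Proposition~\ref{prop:functional_expectations}, which yields $(\nu_i\nu_j)^{2t}\sum_{s=0}^t \Phi^s(\nu_i\nu_j d)^{-2s}(\phi_i\circ\phi_j)$, and then $\sum_x$ of this against $\mathbf 1$ produces exactly $\nu_i^{2t}\nu_j^{2t}d^2\Gamma^{(t)}_{ij}$ after matching with the definition \eqref{eq:def_Gamma} of $\Gamma^{(t)}$; for \eqref{eq:B*tB*t} one uses the $\overline{F_{\phi_i,t}}$ identity to get the shifted sum $\Gamma^{(t+1)}-\delta_{ij}$, and for \eqref{eq:SBtSBt} one gets the full $\Gamma^{(t+1)}$ without the $-\delta_{ij}$ correction because the $S$ operator adds one extra level of walk.

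The step I expect to be the main obstacle is step (1) combined with the tangle-free bookkeeping: the quantity $B^t\chi_i$ is genuinely a sum over non-backtracking walks in $G$, but the tree functional $f_{\phi,t}$ is defined on the \emph{tree}, so one must argue that on the event that $(G,x)_{2t+2}$ is tangle-free the walks in $G$ starting at $x$ are in bijection with tree walks, \emph{except} for a controlled number of walks that traverse the unique cycle — and these contribute at most a factor $2$, not $1$, which is why the domination bound carries the constant $2$. Keeping track of which intermediate vertices lie in $V_1$ versus $V_2$ (so that the alternating product of $M$-entries in \eqref{eq:def_tree_functional} is correctly reproduced, and the bipartite Galton--Watson offspring structure of Definition~\ref{def:gw_tree} is respected) is the delicate part; a subtlety specific to \eqref{eq:SBtSBt} is that $SB^t\check\chi_i$ lands on $V_1$ through a weighted start operator, so one extra $\Delta$-weight $A_{e_1e_2}A_{e_3e_2}$ must be absorbed, shifting $t$ to $t+1$ and changing $\theta_2^{4t}$ to $\theta_2^{4t+4}$ — I would handle this by simply noting $SB^t\check\chi_i = (\text{vector of }f_{\phi_i,t+1}\text{ values})$ directly rather than manipulating operators. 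Once the functional identification is clean, everything else is a mechanical application of the two cited propositions together with the parameter inequalities $\rho\ge\nu_1^2$, $K\ge1$, and $L^2 = K\rho$ from Section~4.
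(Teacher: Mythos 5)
Your overall strategy (identify each bilinear form as $\sum_{x\in V_1}f(G,x)$ for a local functional $f$, bound $f$ on the tangle-free event, then apply Proposition~\ref{prop:functional_concentration} and Proposition~\ref{prop:functional_expectations}) is the paper's strategy. But the functional you assign to \eqref{eq:BtBt} is wrong, and this is not a bookkeeping issue. The inner product $\langle B^t\chi_i,B^t\chi_j\rangle=\sum_{e\in\vec E}[B^t\chi_i](e)\,[B^t\chi_j](e)$ is a sum over oriented $2$-paths $e$ of a \emph{product of two continuations from the same $e$}; grouping by the root $x=e_1$ gives $f(g,o)=\sum_{e:e_1=o}\vec f_{\phi_i,t}(g,e)\vec f_{\phi_j,t}(g,e)$, with $\vec f$ the edge-rooted functional of \eqref{eq:def_edge_functional}. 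Your identity $\langle B^t\chi_i,B^t\chi_j\rangle=\sum_x f_{\phi_i,t}(G,x)f_{\phi_j,t}(G,x)$ is false: the right-hand side is a product of sums over first edges and therefore contains all the cross terms $[\cdots](e)[\cdots](e')$ with $e\neq e'$, $e_1=e_1'$. The distinction matters for the main term: by Proposition~\ref{prop:functional_expectations}, $\langle\ind,\overline{f_{\phi_i,t}f_{\phi_j,t}}\rangle=(\nu_i\nu_j)^{2t}\Gamma^{(t)}_{ij}$ \emph{without} the factor $d^2$, so your step (3), which claims this produces $\nu_i^{2t}\nu_j^{2t}d^2\Gamma^{(t)}_{ij}$, contradicts the proposition you cite. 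The $d^2$ in \eqref{eq:BtBt} comes precisely from the $\partial_W$ identity $\overline{\partial_W f}=\tfrac{d^2 WW^*}{mn}\overline f$ applied with $W=\ind\ind^\top$ (the root has $\Poi(d^2)$ children, and one sums over initial $2$-paths), an ingredient your argument never invokes. The product-of-sums functional you propose is in fact the one the paper uses for \eqref{eq:SBtSBt} (with $t+1$ in place of $t$), which is exactly why that equation has no $d^2$.

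A secondary misattribution: for \eqref{eq:B*tB*t} the correct reduction is $\langle(B^*)^t\check\chi_i,(B^*)^t\check\chi_j\rangle=\langle\Delta B^t\chi_i,\Delta B^t\chi_j\rangle$ via the parity-time symmetry \eqref{eq:parity_time}, and the relevant functional is the $\Delta$-weighted variant $\frac{mn}{d^2}\sum_{e:e_1=o}M_{e_1e_2}M_{e_3e_2}\vec f_{\phi_i,t}(g,e)\vec f_{\phi_j,t}(g,e)$; the $-\delta_{ij}$ arises because the extra leading factor $\Phi$ shifts the geometric sum by one step and the $s=0$ term of $\Gamma^{(t+1)}$ equals $\langle\ind,\phi_i\circ\phi_j\rangle=\delta_{ij}$. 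It has nothing to do with the telescoped functional $F_{\phi_i,t}=(f_{\phi_i,t+1}-\nu_i^2f_{\phi_i,t})^2$, which is used only in Lemma~\ref{lem:partial_telescope}. Your locality/domination discussion (the factor $2$ from at most two non-backtracking paths on the tangle-free event, the $\kappa^2/n$ from the endpoint values, and the $d^{6t}$ from $b=2$) is sound once the functionals are fixed.
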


\begin{proof}
    In the following, it will be useful to define the following functional, this time acting on wedge-rooted graphs:
    \begin{equation}\label{eq:def_edge_functional}
        \vec f_{\phi, t}(g, e) = \left( \frac{mn}{d^2} \right)^t\sum_{o_2 = e_3, \dots, o_{2t}} \left(\prod_{s=1}^{t} M_{\iota(o_{2s}), \iota(o_{2s+1})}M_{\iota(o_{2s+2}), \iota(o_{2s+1})}  \right) \phi_{\iota(o_{2t+2})},
    \end{equation}
    where $e$ is a wedge and  the sum runs over all tuples $(o_i)$ such that $e_1, e_2, o_2, \dots, o_{2t+2}$ is a non-backtracking path of length $2t+2$.
    First, let
    \begin{equation}
        f(g, o) = \sum_{e: e_1 = o} \vec f_{\phi_i, t}(g, e) \vec f_{\phi_j, t}(g, e). \notag 
    \end{equation}
    Again, $f$ is $(2t+2)$-local, and
    \begin{equation}
        |f(g, o)| \leq 2 \frac{\kappa^2}{n} \theta_2^{4t} |(g, o)_{2t+2}|^2. \notag 
    \end{equation}
    Additionally, it is easy to check that
    \begin{equation}
        \sum_{x\in V_1} f(G, x) = \langle B^t \chi_i, B^t \chi_j \rangle, \notag 
    \end{equation}
    and that, letting $W$ be the all-one matrix,
    \begin{equation}
        f(T, x) = \partial_W( f_{\phi_i, t}f_{\phi_j, t}), \notag 
    \end{equation}
    hence using Proposition \ref{prop:functional_expectations},
    \begin{equation}
        \overline f = \frac{d^2 \ind \ind^*}{n} (\nu_i \nu_j)^{2t} \left(\sum_{s=0}^{t} \frac{\Phi^s}{(\nu_i \nu_j d)^{2s}} \right) (\phi_i \circ \phi_j). \notag 
    \end{equation}
    Taking the scalar product of the above equation with $\ind$, and using Proposition \ref{prop:functional_concentration}, concludes the proof of \eqref{eq:BtBt}. 
    
    For \eqref{eq:B*tB*t}, the parity-time symmetry \eqref{eq:parity_time} implies that
    \begin{equation}
        \langle (B^*)^t \check \chi_i, (B^*)^t \check \chi_j \rangle = \langle \Delta B^t \chi_i, \Delta B^t \chi_j \rangle. \notag 
    \end{equation}
    The corresponding graph functional is now given by
    \begin{equation}
        f(g, o) = \frac{mn}{d^2}\sum_{e: e_1 = o} M_{e_1 e_2} M_{e_3 e_2} \vec f_{\phi_i, t}(g, e) \vec f_{\phi_j, t}(g, e),  \notag 
    \end{equation}
    which yields
    \begin{equation}
        \overline f = \Phi (\nu_i \nu_j)^{2t} \left(\sum_{s=0}^{t} \frac{\Phi^s}{(\nu_i \nu_j d)^{2s}} \right) (\phi_i \circ \phi_j) = (\nu_i \nu_j)^{2t+2} \left( \Gamma_{ij}^{(t+1)} - \delta_{ij} \right). \notag 
    \end{equation}
    The rest of the proof proceeds as above.   Finally, the last equation corresponds to \[f(g, o) = f_{\phi_i, t}(g, o) f_{\phi_j, t}(g, o),\] and proceeds identically.
\end{proof}

Now we are ready to prove Equations \eqref{eq:UU_VV}-\eqref{eq:UV_VBU} of Theorem \ref{thm:spectral_structure_Bl}. 
\begin{proof}[Proof of  \eqref{eq:UU_VV} and \eqref{eq:UV_VBU}]
Recall the definition of $u_i, \hat{u}_i$ in \eqref{eq:def_u_uhat}.   For any $r_0\times r_0$ matrix $M$, we have 
   \begin{align}\label{eq:spectral_entrywise}
       \|M\|\leq \|M\|_F\leq r_0 \max_{ij}|M_{ij}|.
   \end{align}
   
   The first statement in \eqref{eq:UU_VV} follows from \eqref{eq:spectral_entrywise} and the entrywise bounds in  \eqref{eq:BtBt} by taking $c=\eps$ and $t=\ell$. The second statement in \eqref{eq:UU_VV} follows in the same way due to \eqref{eq:B*tB*t}.
 For any $i,j\in [r_0]$,  
 \begin{align}
     \langle u_i, \hat{u}_j\rangle =\frac{\langle B^{\ell} \chi_i, 
 (B^*)^{\ell} \check{\chi}_j\rangle}{\nu_i^{2\ell}\nu_j^{2\ell+2}}=\frac{\langle B^{2\ell} \chi_i, 
  \check{\chi}_j\rangle}{\nu_i^{2\ell}\nu_j^{2\ell+2}}. \notag 
 \end{align}
Then the first statement of \eqref{eq:UV_VBU} follows from Lemma \ref{lem:UV_UBV} by taking $c=\eps$ and $t=2\ell$.  Similarly, 
 \begin{align}
     \langle \hat{u}_i, B^{\ell} u_j\rangle =\frac{\langle B^{3\ell}\chi_j, \check\chi_i\rangle}{\nu_i^{2\ell+2}\nu_j^{2\ell} }. \notag 
 \end{align}
The second statement of \eqref{eq:UV_VBU} is proved by taking $t=3\ell$ in Lemma \ref{lem:UV_UBV}. This completes the proof.
\end{proof}

Finally, we show a result on a pseudo-eigenvector property of $\chi_i$ that shall be useful for the next step of the proof.

\begin{lemma}\label{lem:partial_telescope}
    Let $h = c \log_d(n)$. With probability at least $1 - c' d^4 n^{4c-1}$, for any $t \leq h$,
    \begin{equation}
        \norm{B^{t+1} \chi_i - \nu_i^2 B^t \chi_i}^2 \leq K^2 d^2 \theta_1^{4t} + c'd^{12}\kappa^2\theta_2^{4t}\log(n)^{7/2}\frac{d^{6t}}{\sqrt{n}\wedge \alpha^{1/4}}. \notag 
    \end{equation}
\end{lemma}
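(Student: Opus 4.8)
The plan is to recognize $B^{t+1}\chi_i - \nu_i^2 B^t\chi_i$ as a local graph functional and apply the concentration machinery of Proposition \ref{prop:functional_concentration}, just as in Lemmas \ref{lem:UV_UBV} and \ref{lem:UU_VV}. Concretely, I would write
\[
\norm{B^{t+1}\chi_i - \nu_i^2 B^t\chi_i}^2 = \sum_{x\in V_1} f(G,x), \qquad f(g,o) = \sum_{e: e_1 = o}\bigl(\vec f_{\phi_i, t+1}(g,e) - \nu_i^2 \vec f_{\phi_i, t}(g,e)\bigr)^2,
\]
using the edge functionals $\vec f_{\phi,t}$ from \eqref{eq:def_edge_functional} (with the understanding that $(B^{t+1}\chi_i)(e)$ and $(B^t\chi_i)(e)$ are exactly these functionals evaluated on the neighborhood of $e$ when that neighborhood is tree-like). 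Then $f$ is $(2t+4)$-local, and the same tangle-free argument as in the earlier lemmas gives a pointwise bound $|f(g,o)| \le c\,\kappa^2 n^{-1} \theta_2^{4t+4}|(g,o)_{2t+4}|^2$, putting us squarely in the hypotheses of Proposition \ref{prop:functional_concentration} with $a = c\kappa^2/n$, $b = 2$, and $h$ of order $t$. That proposition then controls $\sum_x f(G,x)$ by $\sum_x \overline{f}(x) = \sum_x \dE f(T,x)$ plus an error of the stated shape $c\kappa^2 d^{12}\log(n)^{7/2}\,d^{6t}/(\sqrt n \wedge \alpha^{1/4})$, which matches the second term on the right-hand side.

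The heart of the argument is then the tree computation: I would show $\sum_x \dE f(T,x) = \langle \ind, \overline{F_{\phi_i, t}}\rangle$ up to lower-order terms, where $F_{\phi_i,t} = (f_{\phi_i,t+1} - \nu_i^2 f_{\phi_i,t})^2$ is precisely the functional appearing in Proposition \ref{prop:functional_expectations}. That proposition gives $\overline{F_{\phi_i,t}} = d^{-2t}\Phi^t(\phi_i\circ\phi_i)$, so
\[
\langle \ind, \overline{F_{\phi_i,t}}\rangle = \frac{\langle \ind, \Phi^t(\phi_i\circ\phi_i)\rangle}{d^{2t}} \le \frac{K^2\rho^{2t}}{n d^{2t}}\cdot n = \frac{K^2\rho^{2t}}{d^{2t}} = K^2\Bigl(\frac{\rho}{d^2}\Bigr)^{2t}\cdot d^{2t}\cdot\text{(correction)},
\]
and recalling $\theta_1^2 = \rho/d$ we get $\langle\ind, \overline{F_{\phi_i,t}}\rangle \le K^2 \rho^{2t}/d^{2t} = K^2 d^{2t}\theta_1^{4t}/d^{2t}$; more carefully, $\rho^{2t}/d^{2t} = (\rho/d)^{2t} \cdot 1 = \theta_1^{4t}$, and tracking the extra $d^2$ from the $e$-sum over children of the root (each even vertex of $T$ has $\Poi(d^2)$ children, contributing a factor $d^2$ when we sum the edge functional over $e$ with $e_1 = o$) yields exactly the claimed $K^2 d^2 \theta_1^{4t}$. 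I would use \eqref{eq:Phi_bound} and \eqref{eq:Phi_scalar_bound} from Section 4 for these entrywise bounds on $\Phi^t$, together with $\norm{\phi_i}_1 \le \sqrt n$.

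The one subtlety I expect to be the main obstacle is the bookkeeping in passing from $\norm{B^{t+1}\chi_i - \nu_i^2 B^t\chi_i}^2$ to a genuine \emph{local} functional: the operator $B$ is only local on tangle-free neighborhoods, and $(B^{t+1}\chi_i)(e)$ as a literal sum over non-backtracking walks in $G$ agrees with $\vec f_{\phi_i,t+1}$ evaluated on $(G,e)_{2t+4}$ only when that ball is tree-like. So I would insert the indicator $\ind_{(g,o)_{2t+4}\text{ tangle-free}}$ into $f$ (as in Lemma \ref{lem:UV_UBV}), absorb the discrepancy on the tangled event into the failure probability $1 - c'd^4 n^{4c-1}$ via Proposition \ref{prop:locally_tree_like}, and check that on the tangle-free event the telescoping identity $\vec f_{\phi_i,t+1} - \nu_i^2 \vec f_{\phi_i,t}$ on $(G,e)$ matches the corresponding object on the coupled tree $(T,e)$. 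Matching the $\theta_2$ versus $\theta_1$ scalings in the two error terms — the ``signal'' term naturally carrying $\theta_1$ through $\rho = \norm{Q}$ and the ``fluctuation'' term carrying $\theta_2 = L/d$ through the pointwise amplitude bound — is then just a matter of applying the right estimate ($\norm{Q}$-based spectral bound for the mean, $L$-based $\ell^\infty$ bound for the concentration error), exactly as in the proof of Lemma \ref{lem:UU_VV}.
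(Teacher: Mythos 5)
Your proposal matches the paper's proof essentially step for step: the same functional $f(g,o)=\sum_{e:e_1=o}\bigl(\vec f_{\phi_i,t+1}(g,e)-\nu_i^2\vec f_{\phi_i,t}(g,e)\bigr)^2$, the same $(2t+4)$-locality and $\kappa^2\theta_2^{4t}$ pointwise bound, the same identification of $\overline f$ with $\tfrac{d^2\ind\ind^*}{n}\tfrac{\Phi^t}{d^{2t}}(\phi_i\circ\phi_i)$ via Proposition \ref{prop:functional_expectations}, the bound $K^2d^2\theta_1^{4t}$ from \eqref{eq:Phi_scalar_bound}, and Proposition \ref{prop:functional_concentration} plus a triangle inequality to conclude. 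This is correct and is the paper's argument.
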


\begin{proof}
Recalling the definition of $\vec{f}$ in \eqref{eq:def_edge_functional}, we define
\begin{equation}
    f(g, o) = \sum_{e:e_1 = o} \left(\vec f_{\phi_i, t+1}(g, e) - \nu_i^2\vec f_{\phi_i, t}(g, e)\right)^2 \notag 
\end{equation}
Then $f$ is $(2t+4)$-local, and satisfies
\begin{equation}
    |f(g, o)| \leq 4\kappa^2 \theta_2^{4t} |(g, o)_{2t+4}|^2. \notag 
\end{equation}
On the other hand, by the same arguments as above, we have
\begin{equation}
    \overline f = \frac{d^2 \ind \ind^*}{n} \frac{\Phi^{t}}{d^{2t}}(\phi_i \circ \phi_i), \notag 
\end{equation}
and using \eqref{eq:Phi_scalar_bound}
\begin{equation}
    \left| \sum_{x \in [n]} \overline f(x) \right| \leq K^2 d^2 \theta_1^{4t}. \notag 
\end{equation}
Proposition~\ref{prop:functional_concentration} with a triangular inequality completes the proof.
\end{proof}

\section{Matrix expansion and norm bounds}\label{sec:bulk_radius}
We now move on to show \eqref{eq:BPU_PVB}. Let $\vec{E}_2(V)$ be the oriented wedge set on a complete bipartite graph with vertex sets $V_1, V_2$. We can extend the definition of $B$ to be an operator on $\vec{E}_2(V)$ such that  for any $e=(e_1,e_2,e_3), f=(f_1,f_2,f_3)\in \vec{E}_2(V)$,
\begin{align}\label{eq:defB2}
B_{ef}=\begin{cases} A_{f_1f_2}A_{f_3f_2}\mathbf{1} \{e\to f \} \quad  &\text{ if } e,f\in \vec{E}_2, \\
 0 & \text{ otherwise.}
\end{cases}
\end{align}

For $k\geq 0$, we can define $\Gamma_{e, f}^{2k+2}$ to be the set of non-backtracking walks of length $(2k+2)$ from $e$ to $f$ denoted by $(\gamma_0,\cdots, \gamma_{2k+2})$. We then have
\begin{align}
    B^k_{ef}=\sum_{\gamma \in \Gamma_{ef}^{2k+2}}   X_{e_1e_2}X_{e_3e_2}\prod_{s=1}^{k}A_{\gamma_{2s}\gamma_{2s+2},\gamma_{2s+1}}. \notag 
\end{align}

A bipartite graph spanned by $\gamma$ is given by all vertices and edges from  $\gamma$. We say $\gamma$ is a \textit{tangle-free} path if the bipartite graph $G$ spanned by $\gamma$ contains at most one cycle. Otherwise, we call $\gamma$ a tangled path. A bipartite graph $G$   is called  $\ell$-tangle-free  if  for any $x\in V_1$, there is at most one cycle in 
 the $\ell$-neighborhood of $x$ in $G$ denoted by $(G,x)_{\ell}$.

For $k\geq 0$, let $F_{ef}^{2k+2}\subseteq \Gamma_{ef}^{2k+2}$ be the subset of all tangle-free paths of length $(2k+2)$. 
If the bipartite graph $G$ corresponding to the biadjacency matrix $X$ is $(2\ell+2)$-tangle free, then for all $1\leq k\leq \ell$, we must have $B^k=B^{(k)}$, with  
\begin{align}
    B^{(k)}_{ef}=\sum_{\gamma \in F_{ef}^{2k+2}}   X_{e_1e_2}X_{e_3e_2}\prod_{s=1}^{k}A_{\gamma_{2s}\gamma_{2s+2},\gamma_{2s+1}}. \notag 
\end{align}

We introduce short-hand notations:
\[ A_{e_1e_3,e_2}=A_{e_1e_2}A_{e_3e_2}, \quad X_{e_1e_3,e_2}=X_{e_1e_2}X_{e_3e_2}, \quad M_{e_1e_3,e_2}=M_{e_1e_2}M_{e_3e_2}.
\]
And similarly, 
\begin{align}\label{eq:Qe}
 Q_{e_1e_3,e_2}=Q_{e_1e_2}Q_{e_3e_2}=mnM_{e_1e_3,e_2}^2.   
\end{align}

Define the corresponding centered random variables 
\[\underline{ A}_{e_1e_3,e_2}=A_{e_1e_2}A_{e_3e_2}-M_{e_1e_2}M_{e_3e_2}, \quad \underline{X}_{e_1e_3,e_2}=X_{e_1e_2}X_{e_3e_2}-\frac{d^2}{mn}.  \]
We then define $\underline{B}^{(k)}$, a centered version of $B^{(k)}$, as 
\begin{align}
  \underline{B}^{(k)}_{ef}=  \sum_{\gamma \in F_{ef}^{2k+2}}   \underline X_{e_1e_3,e_2}\prod_{s=1}^{k}\underline A_{\gamma_{2s}\gamma_{2s+2},\gamma_{2s+1}}. \notag 
\end{align}

We also define 
\[ B^{(0)}_{ef}=\mathbf{1}\{ e=f\}X_{e_1e_3,e_2}, \quad  \underline{B}^{(0)}_{ef}=\mathbf{1}\{ e=f\}\underline{X}_{e_1e_3,e_2}.\]

The following  telescoping sum formula holds for any real numbers $a_s, b_s, 0\leq s\leq \ell$:
\begin{equation}\label{eq:prod_difference_identity}
    \prod_{s=0}^{\ell} a_s=\prod_{s=0}^{\ell} b_s+\sum_{t=0}^{\ell} \prod_{s=0}^{t-1}b_s(a_t-b_t)\prod_{s=t+1}^{\ell} a_s.\notag
\end{equation}
Separating the cases $t=0$ in the sum, we can decompose $B^{(\ell)}$ as
\begin{align}
  B^{(\ell)}_{ef}= & \underline{B} ^{(\ell)}_{ef}+\frac{d^2}{mn}\sum_{\gamma\in F_{ef}^{2\ell+2}}\prod_{s=1}^{\ell}A_{\gamma_{2s}\gamma_{2s+2},\gamma_{2s+1}} \notag \\
  &+\sum_{t=1}^{\ell}\sum_{\gamma\in F_{ef}^{2\ell+2}}\underline{X}_{e_1e_3,e_2}\prod_{s=1}^{t-1} \underline{A}_{\gamma_{2s}\gamma_{2s+2},\gamma_{2s+1}}M_{\gamma_{2t}\gamma_{2t+2},\gamma_{2t+1}}\prod_{s=t+1}^{\ell}A_{\gamma_{2s}\gamma_{2s+2},\gamma_{2s+1}}. \notag 
\end{align}

For $1\leq t\leq \ell-1$, define $F_{2t,ef}^{2\ell+2}\subset \Gamma_{ef}^{2\ell+2}$ the set of non-backtracking \textit{tangled} paths $\gamma=(\gamma_0,\dots, \gamma_{2\ell+2})$ such that $(\gamma_0,\dots, \gamma_{2t}) \in F_{e,g}^{2t}$, $(\gamma_{2t+2},\dots, \gamma_{2\ell+2})\in F_{g', f}^{2\ell-2t}$ for some edges $g,g'\in \vec{E}_2(V)$. For $t=0$, $F_{0,ef}^{2\ell+2}\subset \Gamma_{ef}^{2\ell+2}$ is the set of non-backtracking tangled paths $\gamma=(\gamma_0,\dots, \gamma_{2\ell+2})$ such that $(\gamma_0,\gamma_1)=(e_1,e_2)$, and $(\gamma_2,\dots,\gamma_{2\ell+2}) \in F_{gf}^{2\ell}$ for some $g\in \vec{E}_2(V)$. Necessarily, $g_1=e_3$. Similarly, $F_{\ell,ef}^{2\ell+2}\subset \Gamma_{ef}^{2\ell+2}$ is the set of non-backtracking tangled paths $\gamma=(\gamma_0,\dots, \gamma_{2\ell+2})$ such that $(\gamma_0,\dots,\gamma_{2\ell})\in F_{e,g}^{2\ell}$, for some $g\in \vec{E}_2$, and $(\gamma_{2\ell+1},\gamma_{2\ell+2})=(f_2,f_3)$. Necessarily, $g_3=f_1$.

Next, we introduce three matrices $H, H^{(1)}$ and $H^{(2)}$ such that for $e,f\in \vec{E}_2(V)$,
\begin{align}
H_{ef}&=\frac{d^2}{mn}\mathbf{1} \{ e\to f\} \label{def:H}\\
H_{ef}^{(1)}&=\mathbf{1} \{ e\to f\}M_{f_1f_3,f_2} \label{def:H1}\\
H^{(2)}_{ef}&=\sum_{e\to g\to f} M_{e_3f_1,g_2}A_{f_1f_3,f_2},\label{def:H2}
\end{align}
where the sum is over all $g\in \vec{E}_2$ such that $e\to g\to f$ is a non-backtracking walk of length $6$. Then the following decomposition holds:  
\begin{align}
B^{(\ell)}=\underline{B}^{(\ell)}+HB^{(\ell-1)}+\sum_{t=1}^{\ell-1}\underline{B}^{(t-1)}H^{(2)}B^{(\ell-t-1)}+\underline{B}^{(\ell-1)}H^{(1)}-\sum_{t=0}^{\ell} R_t^{(\ell)}, \notag 
\end{align}
where for $1\leq t\leq \ell$,
\begin{align*}
(R_t^{(\ell)})_{ef}&=\sum_{\gamma\in F_{2t,ef}^{2\ell+2}} \underline{X}_{e_1e_3,e_2}\prod_{s=1}^{t-1} \underline{A}_{\gamma_{2s}\gamma_{2s+2},\gamma_{2s+1}}M_{\gamma_{2t}\gamma_{2t+2},\gamma_{2t+1}}\prod_{s=t+1}^{\ell}A_{\gamma_{2s}\gamma_{2s+2},\gamma_{2s+1}}\\
(R_0^{(\ell)})_{ef}&=\frac{d^2}{mn}\sum_{\gamma\in F_{0,ef}^{2\ell+2}} \prod_{s=1}^{\ell}A_{\gamma_{2s}\gamma_{2s+2},\gamma_{2s+1}}.
\end{align*}


From  \eqref{eq:matrix_relation2}, the following holds: 
\begin{align}
(TPS_{\Delta})_{ef}=\sum_{i,j}T_{ei}P_{ij}S_{jf}\Delta_{ff}=P_{e_3f_1}\Delta_{ff}=\sum_{u\in V_2} M_{e_3u}M_{f_1u}A_{f_1f_2}A_{f_3f_2}. \notag 
\end{align}

And from the singular value decomposition of $P$ in \eqref{eq:defPQ}, and the matrix relations from \eqref{eq:matrix_relations},
\begin{align}\label{eq:TPS}
    TP S_{\Delta}=\sum_{i=1}^n \nu_i^2 T\phi_i \phi_i^*S_{\Delta}=\sum_{i=1}^n \nu_i^2 \chi_i \check{\chi}_i^*.
\end{align}
Note that $H^{(2)}$ is close to  $TPS_{\Delta}^*$.
 We define $\tilde H$ such that 
\begin{align}
    H^{(2)}=\sum_{i=1}^r\nu_i^2 \chi_i\check{\chi}_i^*+\tilde{H}. \notag 
\end{align}
Therefore, the following decomposition holds:
\begin{align}
    B^{(\ell)}=&\underline{B}^{(\ell)}+HB^{(\ell-1)}+\sum_{t=1}^{\ell-1}\sum_{k=1}^r\nu_k^2\underline{B}^{(t-1)}\chi_k\check{\chi}_k^* B^{(\ell-t-1)} \notag \\
    &+\sum_{t=1}^{\ell-1} \underline{B}^{(t-1)}\tilde{H} B^{(\ell-t-1)}+\underline{B}^{(\ell-1)}H^{(1)}-\sum_{t=0}^{\ell} R_t^{(\ell)}. \label{eq:telescope_decomposition}
\end{align}

From \eqref{def:H1}, deterministically, 
\[\|H^{(1)}\|\leq \sqrt{\|H^{(1)}\|_{1} \|H^{(1)}\|_{\infty}}\leq nm \|M\|_{\infty}^2= L^2 .\]  

With the decomposition in \eqref{eq:telescope_decomposition}, the following estimate thus holds:
\begin{lemma}\label{lem:bulk}
For any unit vector $x\in \mathbb R^{\vec{E}_2(V)}$:
\begin{align*}
    \|B^{(\ell)}x\|\leq &\|\underline{B}^{(\ell)}\|+\|HB^{(\ell-1)}\|+ \sum_{t=1}^{\ell-1}\sum_{i=1}^r\nu_i^2\|\underline{B}^{(t-1)}\chi_i\| \left| \langle \check{\chi}_i, B^{(\ell-t-1)} x\rangle \right|\\
    &+\sum_{t=1}^{\ell-1} \|\underline{B}^{(t-1)}\tilde{H} B^{(\ell-t-1)}\|+L^2\|\underline{B}^{(\ell-1)}\|+\sum_{t=0}^{\ell} \|R_t^{(\ell)}\|.
\end{align*}
\end{lemma}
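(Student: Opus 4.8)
The statement to prove is Lemma~\ref{lem:bulk}, which is an immediate consequence of the matrix decomposition derived just above it, combined with the triangle inequality and basic operator-norm bounds. The plan is therefore straightforward: apply the decomposition
\[
B^{(\ell)}=\underline{B}^{(\ell)}+HB^{(\ell-1)}+\sum_{t=1}^{\ell-1}\sum_{k=1}^r\nu_k^2\underline{B}^{(t-1)}\chi_k\check{\chi}_k^* B^{(\ell-t-1)}+\sum_{t=1}^{\ell-1} \underline{B}^{(t-1)}\tilde{H} B^{(\ell-t-1)}+\underline{B}^{(\ell-1)}H^{(1)}-\sum_{t=0}^{\ell} R_t^{(\ell)}
\]
to an arbitrary unit vector $x\in\mathbb{R}^{\vec{E}_2(V)}$, take norms, and use subadditivity of $\norm{\cdot}$.

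The only terms requiring a small amount of care are the rank-one contributions $\nu_k^2\,\underline{B}^{(t-1)}\chi_k\check{\chi}_k^* B^{(\ell-t-1)}x$. For these I would write $\check{\chi}_k^* B^{(\ell-t-1)}x = \langle \check{\chi}_k, B^{(\ell-t-1)}x\rangle$ as a scalar, so that
\[
\norm*{\nu_k^2\,\underline{B}^{(t-1)}\chi_k\check{\chi}_k^* B^{(\ell-t-1)}x} = \nu_k^2\,\bigl|\langle \check{\chi}_k, B^{(\ell-t-1)}x\rangle\bigr|\,\norm{\underline{B}^{(t-1)}\chi_k},
\]
which is exactly the summand appearing in the claimed bound. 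For the term $\underline{B}^{(\ell-1)}H^{(1)}x$, I would invoke the deterministic bound $\norm{H^{(1)}}\leq L^2$ established right before the lemma (via $\norm{H^{(1)}}\leq\sqrt{\norm{H^{(1)}}_1\norm{H^{(1)}}_\infty}\leq nm\norm{M}_\infty^2=L^2$) together with $\norm{x}=1$ to get $\norm{\underline{B}^{(\ell-1)}H^{(1)}x}\leq L^2\norm{\underline{B}^{(\ell-1)}}$. All remaining terms — $\underline{B}^{(\ell)}$, $HB^{(\ell-1)}$, $\underline{B}^{(t-1)}\tilde{H}B^{(\ell-t-1)}$, and the $R_t^{(\ell)}$ — are bounded simply by their operator norms since $\norm{x}=1$, e.g. $\norm{\underline{B}^{(\ell)}x}\leq\norm{\underline{B}^{(\ell)}}$ and $\norm{R_t^{(\ell)}x}\leq\norm{R_t^{(\ell)}}$.

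Assembling these term-by-term estimates through one application of the triangle inequality yields exactly
\begin{align*}
    \norm{B^{(\ell)}x}\leq &\norm{\underline{B}^{(\ell)}}+\norm{HB^{(\ell-1)}}+ \sum_{t=1}^{\ell-1}\sum_{i=1}^r\nu_i^2\norm{\underline{B}^{(t-1)}\chi_i} \bigl| \langle \check{\chi}_i, B^{(\ell-t-1)} x\rangle \bigr|\\
    &+\sum_{t=1}^{\ell-1} \norm{\underline{B}^{(t-1)}\tilde{H} B^{(\ell-t-1)}}+L^2\norm{\underline{B}^{(\ell-1)}}+\sum_{t=0}^{\ell} \norm{R_t^{(\ell)}},
\end{align*}
which is the assertion of the lemma. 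There is no substantive obstacle here: the lemma is a bookkeeping step that isolates, in a clean form, the quantities whose operator norms (or, for the rank-one pieces, whose scalar overlaps against $B^{(\ell-t-1)}x$) will be controlled individually in the subsequent sections. The only thing to be careful about is that the rank-one terms must \emph{not} be bounded by a crude $\nu_i^2\norm{\underline{B}^{(t-1)}\chi_i}\norm{\check{\chi}_i}\norm{B^{(\ell-t-1)}}$, since $\norm{B^{(\ell-t-1)}}$ is too large; keeping the factor $|\langle\check{\chi}_i,B^{(\ell-t-1)}x\rangle|$ intact (to be estimated later via the pseudo-eigenvector bounds of Lemmas~\ref{lem:UV_UBV} and~\ref{lem:UU_VV}) is precisely the point of stating the lemma in this form.
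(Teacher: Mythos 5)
Your proof is correct and is exactly the argument the paper intends: the lemma follows immediately from the displayed decomposition of $B^{(\ell)}$ by applying it to $x$, using the triangle inequality, treating each rank-one term as a scalar $\langle\check{\chi}_k,B^{(\ell-t-1)}x\rangle$ times the vector $\underline{B}^{(t-1)}\chi_k$, and invoking the deterministic bound $\norm{H^{(1)}}\leq L^2$. No further comment is needed.
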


The bulk estimates of \eqref{eq:BPU_PVB} stem from Lemma \ref{lem:bulk} and the following proposition:

\begin{proposition}\label{prop:trace}
 Let $\chi$ be any vector among $\chi_1,\dots, \chi_r\in \mathbb C^{\vec{E}_2(V)}$.
  Let $\ell\leq \lfloor \log_{d}(n)\rfloor$. There exists a constant $C_1, c>0$  such that for $n\geq C_1K^{12}$, with  probability at least $1-cn^{-1/4}$, the following norm bounds hold for all $0\leq k\leq \ell$:
  \begin{align}
      \| \underline{B}^{(k)}\| &\leq K^{11} d^{5/2}\log(n)^{10}\theta^{2k}, \label{eq:Delta}\\ 
 \|R_k^{(\ell)}\| &\leq K^{20} d^{3/2}\log(n)^{28} \frac{L^{2\ell}}{\sqrt{mn}}, \label{eq:Rk} \\
      \| HB^{(k)}\| &\leq  d^5 \log(n)^8\, \frac{L^{2k-2}}{mn} , 
      \label{eq:HBk}\\
       \| \underline{B}^{(k)}\chi\| &\leq \kappa K^{11} d^{5/2} \log(n)^{12} \theta^{2k}. \label{eq:Delta_chi}
  \end{align}
  And for all $1\leq k\leq \ell-1$, 
  \begin{align}
      \|\underline{B}^{(k-1)}\tilde{H} B^{(\ell-k-1)}\|\leq K^{11}\kappa^2 d^{3} \log(n)^{18} \frac{\theta^{2(k-1)} L^{2(\ell-k)}}{mn}.\label{eq:Sk}
  \end{align}
\end{proposition}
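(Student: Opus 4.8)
\textbf{Proof plan for Proposition \ref{prop:trace}.}

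The plan is to prove each of the six norm bounds by a tangle-free trace method, in the spirit of \cite{bordenave2018nonbacktracking, stephan2020non, stephan2022sparse}, but adapted to the bipartite oriented-2-path structure. For a target matrix $W$ among $\underline B^{(k)}, R_k^{(\ell)}, HB^{(k)}, \dots$, the first step is to bound $\norm{W}$ by $\left(\E \tr\big[(WW^*)^{q}\big]\right)^{1/(2q)}$ for a well-chosen even integer $q \asymp \log(n)/\log(d)$, then expand the trace as a sum over closed (possibly decorated) non-backtracking walks of length $\asymp q\ell$ on the complete bipartite graph. Each such walk contributes an expectation of a product of centered entries $\underline X, \underline A$ (or plain $A$), and the crucial point — flagged in the introduction as the main novelty — is that \emph{two distinct oriented 2-paths $e,f \in \vec E_2(V)$ may share the middle vertex and hence are not independent}. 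So when enumerating walks one must track the bipartite multigraph swept out, including possible coincidences among the middle ($V_2$) vertices, and argue that any walk with too few ``free'' choices either has vanishing expectation (an edge traversed an odd number of times, using centering) or is sufficiently rare. This is where the tangle-free constraint enters: restricting to $F^{2k+2}_{ef}$ forces the swept graph to have at most one cycle, which caps the excess of edges over vertices and lets one count walks by the number of distinct vertices of each color, weighted by the appropriate powers of $p$ and of $\max|M_{xy}|$, $\rho$, $L$.

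Concretely, I would organize the counting around a canonical encoding of each closed walk: record, at each step, whether the next vertex is ``new'' or a ``backtrack-to-a-seen-vertex'' (the latter being the source of cycles), so that the number of walks with a prescribed number $v_1$ of distinct left vertices, $v_2$ of distinct right vertices, and $b$ ``bad'' (repeat) steps is at most $(q\ell)^{O(b)} n^{v_1} m^{v_2}$. The expectation of the corresponding entry product is then bounded using the entrywise estimates from Section \ref{sec:local_study} and the parameter inequalities of Section 5: each traversed $\{V_1,V_2\}$ edge carries a factor $p = d/\sqrt{mn}$ from the Bernoulli variable together with a factor $\sqrt{mn}\,|M_{xy}| \le L$ (or its square giving $Q$) from the rescaling, and a path of length $2s$ between two left vertices contributes, after summing over the intermediate right vertices, a factor controlled by $(\Phi^s)_{xy} \le K^2\rho^{2s}/n$ via \eqref{eq:Phi_bound}. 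Balancing $n^{v_1}m^{v_2}p^{(\#\text{edges})}$ against these weights, and using $m = n^{1+\eta}$ together with the choice $\ell = \lfloor \eps\log_d n\rfloor$ with $\eps < \frac{(\eta/2)\wedge 1}{25}$, produces the stated powers of $K$, $d$, $\log n$ and the threshold factor $\theta^{2k}$ (recall $\theta^2 = \max(\rho/d, L^2/d^2)$, so the two regimes $\theta = \theta_1$ and $\theta = \theta_2$ are handled by the $\rho$- and $L$-type weights respectively). For \eqref{eq:Delta_chi} and \eqref{eq:Sk} one does the same with $WW^*$ replaced by $W\chi\chi^*W^*$ (resp. the relevant triple product), picking up the extra factor $\kappa = \sqrt n\max_i\norm{\phi_i}_\infty$ from contracting the walk against $\chi_i = T\phi_i$; for \eqref{eq:HBk}, $H$ is deterministic of small norm ($\norm H \le d^2/\sqrt{mn}$ by $\norm\cdot_1,\norm\cdot_\infty$) so it just multiplies the bound for $B^{(k)}$. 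Finally, the tangle-free event and the high-probability truncation of neighborhood sizes from Proposition \ref{prop:neighbourhood_size_bounds} / Proposition \ref{prop:locally_tree_like} justify replacing $B^k$ by $B^{(k)}$ and controlling the contribution of the rare tangled walks, giving the probability $1 - n^{-1/4}$ after a union bound and Markov's inequality on the trace.

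The main obstacle is exactly the dependence between oriented 2-paths through shared middle vertices: the standard trace-method bookkeeping in \cite{bordenave2018nonbacktracking, stephan2020non} relies on edges of the walk being (conditionally) independent, and here a single $V_2$-vertex of high degree couples many 2-paths at once. Getting a usable bound requires a refined notion of ``block'' in the walk decomposition — grouping consecutive steps that pass through the same right vertex — and a careful argument that coincidences among right vertices beyond those forced by the tangle-free structure only cost us, never help, combined with the fact (Proposition \ref{prop:neighbourhood_size_bounds}) that right vertices of degree $\ge 2$ are rare enough ($\bar V_2$ has effective size $\asymp n$, not $m$). Isolating this combinatorial lemma and plugging it into the otherwise-standard weight balancing is the technical heart of the argument; the rest is bounded-degree moment bookkeeping of the kind carried out in the cited works, and the extraction of the final constants (the explicit exponents on $K$, $d$, $\log n$) is routine but tedious.
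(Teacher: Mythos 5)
Your overall strategy coincides with the paper's: bound the norm by a high trace moment, expand over concatenations of tangle-free non-backtracking $2$-paths, count equivalence classes of walks by genus, and balance $n^{v_1}m^{v_2}p^{\#\text{edges}}$ against the $L$-, $\rho$- and $K$-type weights using the entrywise bounds of Section 5. You also correctly identify the central obstacle (two oriented $2$-paths sharing a middle vertex are dependent). However, at precisely that point your plan stops at ``a refined notion of block \dots and a careful argument that coincidences only cost us,'' which is where the actual content lies. The paper's resolution is concrete: classify triplets as \emph{good} (edge-disjoint from all others) or \emph{bad}, note that good triplets factor out of the expectation with a clean $p^2$ per triplet, and for bad triplets apply the deterministic decoupling inequality $|X_{e_1}X_{e_2}-p^2|\leq(1+4p)|(X_{e_1}-p)(X_{e_2}-p)|$ to reduce the expectation to a product over \emph{distinct edges}, which are genuinely independent. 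The count of bad edges and of their total multiplicity is then controlled through the degree profile of the $V_2$-vertices (no degree-one vertices in $V_\gamma^2$, so $v_{\geq 3}\leq 2(a-v+2s)$ and $\sum_{v \text{ bad}} d(v)\leq 6(a-v+s+1)$), i.e.\ by the genus plus $O(s)$ — not by the tangle-free property of a single path, since the union of $2s$ paths can have genus of order $s$; the suppression of large genus comes from the $n^{-g}$ factor in the geometric series, not from an a priori cap. A second bipartite-specific ingredient you do not mention is the control of the imbalance $|\,|V_\gamma^1|-|V_\gamma^2|\,|\leq a-v+1+2s$, without which the label sum $n^{v_1}m^{v_2}$ is uncontrollable since $m\gg n$.

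Two further points would make your plan fail as written. First, your treatment of \eqref{eq:HBk} is incorrect: $H_{ef}=\frac{d^2}{mn}\ind\{e\to f\}$ has roughly $mn$ nonzero entries per row, so $\norm{H}\leq\sqrt{\norm{H}_1\norm{H}_\infty}\leq d^2$, not $d^2/\sqrt{mn}$; moreover $\norm{B^{(k)}}$ is \emph{not} small (it contains the rank-$r$ signal, which is exactly why the decomposition of Lemma \ref{lem:bulk} is needed), so a product bound $\norm{H}\,\norm{B^{(k)}}$ cannot produce the $(mn)^{-1}$ in \eqref{eq:HBk}. The paper instead runs a separate trace computation on $HB^{(k-1)}$ in which the $2s$ factors of $d^2/(mn)$ are played off against the label count $(\sqrt{mn})^v(\sqrt{m/n})^{g-1}$. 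Second, your choice $q\asymp\log(n)/\log(d)$ is unsafe in the regime $d\leq n^c$ allowed by the paper: if $d$ is polynomial in $n$ then $q=O(1)$ and $n^{1/(2q)}$ is polynomial in $n$, destroying the bound. The paper takes $s\asymp\log n/\log\log n$, which always yields $n^{1/(2s)}\leq\log(n)^{O(1)}$.
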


Before we prove Proposition \ref{prop:trace} in the next section, we show how it implies eq. \eqref{eq:BPU_PVB}.

 \begin{proof}[Proof of \eqref{eq:BPU_PVB}]
With the parity-time invariance from \eqref{eq:parity_time}, 
\begin{align}\notag 
    \langle \check{\chi}_i, B^{t} w\rangle= \langle B^{t}\chi_i, J_{\Delta}w\rangle.
\end{align}
and we have $\|J_{\Delta}w\|\leq \frac{L^2}{d^2}$.
Since $w$ is orthogonal to all $B^{\ell}\chi_i, i\in [r_0]$,
\begin{align*}
    |\nu_i^{-2t} \langle \check{\chi}_i, B^{t} w\rangle|&=|\nu_i^{-2t} \langle \check{\chi}_i, B^{t} w\rangle-\nu_i^{-2\ell} \langle \check{\chi}_i, B^{\ell} w\rangle|\\
    &=\left|\sum_{s=t}^{\ell-1}\nu_i^{-2s} \langle \check{\chi}_i, B^{s} w\rangle-\nu_i^{-2(s+1)} \langle \check{\chi}_i, B^{s+1} w\rangle\right|\\
    &\leq \frac{L^2}{d^2}\sum_{s=t}^{\ell-1}\nu_i^{-2s-2} \| B^{s+1}\chi_i- \nu_i^2 B^s\chi_i\|.
\end{align*}
With Lemma~\ref{lem:partial_telescope}, 
\begin{align}\notag 
    \| B^{s+1}\chi_i- \nu_i^2 B^s\chi_i\|\leq Kd\theta^{2s}+c\kappa d^6\theta^{2s}\log(n)^{7/4}\frac{d^{3s}}{n^{1/4} \wedge \alpha^{1/8} }.
\end{align}
Therefore since $\nu_i\geq \theta$, for $i\in [r_0]$,
\begin{align}\label{eq:ir0}
    |   \langle \check{\chi}_i, B^{t} w\rangle|&\leq  Kd\nu_i^{2t}\sum_{s=t}^{\ell-1}\left(\frac{\theta}{\nu_i} \right)^{2s+2} +\frac{c\kappa d^6\log^{7/4}(n)}{n^{1/4}\wedge \alpha^{1/8}}\nu_i^{2t}\sum_{s=t}^{\ell-1}\left(\frac{\theta}{\nu_i} \right)^{2s+2}d^{3s}\\
    &\leq  \theta^{2t} \left(Kd\log n+\frac{c\kappa d^6\log^{9/4}(n)d^{3\ell}}{n^{1/4}\wedge \alpha^{1/8} }\right).
\end{align}
On the other hand, for $i>r_0$, $\nu_i\leq \theta$, from \eqref{eq:BtBt},
\begin{align}\notag 
   | \langle \check{\chi}_i, B^{t} w\rangle| & \leq \frac{L^2}{d^2} \|B^{t} \chi_i\|\leq L \nu_i^{2t} \sqrt{\Gamma_{ii}^{(t)}}+cd^2L\kappa \theta_2^{2t}\log (n)^{7/4}  \frac{d^{3t}}{n^{1/4} \wedge \alpha^{1/8} }.
\end{align}

From  \eqref{eq:def_Gamma} and \eqref{eq:Phi_scalar_bound},
\begin{align}\notag 
    \Gamma_{ii}^{(t)}\leq \sum_{s=0}^t \frac{K^2 \rho^{2s}}{(\nu_i^2 d)^{2s}}\leq K^2 \log (n) \theta_1^{4t}\nu_i^{-4t},
\end{align}
which implies for $i>r_0$,
\begin{align}\label{eq:ir}
     | \langle \check{\chi}_i, B^{t} w\rangle| & \leq \theta^{2t} \left(L K \log^{1/2}(n) + cd^2L\kappa \log (n)^{7/4}  \frac{d^{3t}}{n^{1/4} \wedge \alpha^{1/8}}\right).
\end{align}
Let $w$ be any unit vector orthogonal to all $\hat{u}_i, i\in [r_0]$ defined in \eqref{eq:def_u_uhat}. By the tangle-free property in Proposition~\ref{prop:locally_tree_like} and Lemma \ref{lem:bulk}, with probability at least $1-cn^{-1/4}$, we can apply the bounds in Proposition~\ref{prop:trace} as well as the two estimates \eqref{eq:ir0} and \eqref{eq:ir} to conclude 
\begin{align}\notag 
    \| B^{\ell} w\|\leq cr\kappa^2 d^6K^{20}\log^{14}(n) \theta^{2\ell},
\end{align}
which  is the first claim of \eqref{eq:BPU_PVB}.  The proof of the second claim in \eqref{eq:BPU_PVB}  follows  the same argument by considering the transpose of the decomposition in \eqref{eq:telescope_decomposition}, as in the proof of \cite[Lemma 26]{stephan2022sparse}.
     \end{proof}

\section{The trace method: proof of \eqref{eq:Delta}} \label{sec: trace_method}
We now show Proposition \ref{prop:trace}. We shall only show eq. \eqref{eq:Delta} in the main text; other inequality are dealt with using similar methods, and relegated to the appendix.
For any integer $s\geq 0$,
\begin{align}
    \|\underline{B}^{(k)}\|^{2s}&\leq \tr \left(\underline{B}^{(k)} {\underline{B}^{(k)}}^*\right)^{s}=\sum_{(e_1,\dots,e_{2s})}\prod_{i=1}^s\underline{B}^{(k)}_{e_{2i-1},e_{2i}}\underline{B}^{(k)}_{e_{2i+1},e_{2i}} \notag \\
    &=\sum_{\gamma\in W_{k,s}}\prod_{i=1}^{2s}\underline{X}_{\gamma_{i,0}\gamma_{i,2},\gamma_{i,1}}\prod_{t=1}^k \underline{A}_{\gamma_{i,2t}\gamma_{i,2t+2},\gamma_{i,2t+1}}, \label{eq:trace_expansion_counting}
\end{align}
where $W_{k,s}$ is the set of sequences of path $(\gamma_1,\dots,\gamma_{2s})$ such that each $\gamma_i=(\gamma_{i,0},\dots,\gamma_{i,2k+2})$ is a non-backtracking tangle-free path of length $2k+2$, with boundary conditions that for all $i\in [s]$,
\begin{align}
    (\gamma_{2i,2k},\gamma_{2i,2k+1},\gamma_{2i,2k+2}) &=(\gamma_{2i-1,2k},\gamma_{2i-1,2k+1},\gamma_{2i-1,2k+2}), \notag \\
     (\gamma_{2i+1,0},\gamma_{2i+1,1},\gamma_{2i+1,2}) &=(\gamma_{2i,0},\gamma_{2i,1},\gamma_{2i,2})\label{eq:summand}
\end{align} 
with the convention that $\gamma_{2s+1}=\gamma_1$. 
For each $\gamma\in W_{k,s}$, we associate with an undirected bipartite graph $G_{\gamma}=(V_{\gamma}^{1}, V_{\gamma}^{2},E_{\gamma})$ of visited vertices and edges. From the boundary condition, $G_{\gamma}$ is connected hence \[|E_{\gamma}|-|V_{\gamma}|+1\geq 0.\]
Let $a(\gamma)=|E_{\gamma}|, v(\gamma)=|V_{\gamma}|.$ We drop the dependence on $\gamma$ for ease of notation.

Now we consider the expectation of \eqref{eq:trace_expansion_counting}. It also depends on the degree profile of vertices in $V_{\gamma}^2$. There are no vertices of degree 1 in $V_{\gamma}^2$ by our construction of $G_{\gamma}$.  let $T_{\gamma}$ be the set of distinct triplets visited by $\gamma$ and let $m_e$ be the number of times a triplet $e$ is visited. If a triplet has an overlapped edge with another triplet, we call it a \textit{bad triplet}; otherwise, we call it a \textit{good triplet}. In addition, we call a vertex in $V_{\gamma}^2$ \textit{bad} if it is a middle point of a certain bad triplet.

Let $k_e$ be the number of times that a triplet $e$ is visited in $\gamma$, which carries the weight $\underline{X}_e$ and denote $\tilde{m}_e=m_e-k_e$.
Let $\overline {W}_{k,s}$ be the subset of ${W}_{k,s}$ such that each good triplet was visited at least twice in $\gamma$.  We can now write the expectation of \eqref{eq:trace_expansion_counting} as 
\begin{align}\label{eq:trace_tildeme}
  & \sum_{\gamma\in \overline W_{k,s}}\mathbb E\prod_{e\in T_{\gamma}}\underline{X}_e^{k_e}\underline{A}_{e}^{\tilde m_e}\\
  = & \sum_{\gamma\in \overline W_{k,s}} \dE \prod_{e\in T_{\gamma}, \text{ good}}\underline{X}_e^{k_e}\underline{A}_{e}^{\tilde m_e}   \prod_{e\in T_\gamma, \text{bad}} \underline{X}_e^{k_e}\underline{A}_{e}^{\tilde m_e} \notag \\
  \leq &\sum_{\gamma\in \overline W_{k,s}}  \prod_{e\in T_{\gamma}} \left( \frac{|M_e|}{p^2}\right)^{\tilde m_e}\cdot \left(\prod_{e\in T_\gamma, \text{good}} p^2\right)\cdot \left |
 \mathbb E\prod_{e\in T_{\gamma},\text{bad}}\underline{X}_e^{m_e}\right| \notag \\
 =&\sum_{\gamma\in \overline W_{k,s}} p^{-4ks} \prod_{e\in T_{\gamma}} |M_e|^{\tilde m_e}\cdot \left(\prod_{e\in T_\gamma, \text{good}} p^2\right)\cdot \left |
 \mathbb E\prod_{e\in T_{\gamma},\text{bad}}\underline{X}_e^{m_e}\right|, \notag 
\end{align} 
where in the last identity, we use that 
\begin{align}\notag
    \sum_{e\in T_{\gamma}} \tilde m_e=2sk.
\end{align}

Notice that when $p\in [0,1/2]$, deterministically, the following inequality holds:
\begin{align}\label{eq:decoupling_inequality}
    |X_{e_1}X_{e_2}-p^2|\leq (1+4p)|(X_{e_1}-p)(X_{e_2}-p)|.
\end{align} 

 Let $T_{\gamma}(v)$ be the collection of bad triplets with the same middle point  $v$. 
Let $d(v)$ be the number of distinct edges connected to $v$ from bad triplets. For each $e\in E_{\gamma}$, with abuse of notation,  let ${m}_e$ be the number of visits of $\gamma$ to the edge $e$. Since contributions of triplets with distinct middle points are independent, with \eqref{eq:decoupling_inequality}, we can consider each contribution from $T_{\gamma}(v)$ as follows
\begin{align}
    \left|\dE\prod_{e\in T_\gamma(v)}\underline{X}_e^{m_e}\right|&=\left|\dE  \prod_{e\in T_\gamma(v)}(X_{e_1e_2}X_{e_3e_2}-p^2)^{m_e}\right| \notag\\
    &\leq  \left(1+4p \right)^{\sum_{e\in T_{\gamma}(v)} m_e} \prod_{e\in E_{\gamma}(v)}\dE |X_e-p|^{m_e}\notag\\
    &\leq \left(1+4p \right)^{\sum_{e\in T_{\gamma}(v)} m_e} 2^{d(v)}p^{d(v)}. \label{eq:dv}
\end{align}

  Notice that $G_{ \gamma}$ is a bipartite graph, and the imbalance between $|V_{\gamma}^1|$ and $|V_{\gamma}^1|$ is bounded  by the genus plus the extra imbalance from the endpoints in each $\gamma_i$, so we have 
 \begin{align}\label{eq:imbalance}
  ||V_{\gamma}^1|-|V_{\gamma}^2||\leq a-v+1+2s.   
 \end{align} 
 
Let $v_i, v_{\geq i}$ be the number of vertices with degree $i$ (at least $i$, respectively) in $V_{\gamma}^2$. Then since there are no vertices of degree 1 in $V_{\gamma}^2$, 
\begin{align}\notag
    v_2+v_{\geq 3}=|V_{\gamma}^2|, \qquand
    2v_2+\sum_{i\geq 3}iv_i\leq a.
\end{align}
Therefore 
\begin{align}\label{eq:bad_edge_multiplicity}
    \sum_{v\in V_{\gamma}^2, \text{bad}} d(v)\leq \sum_{i\geq 3} iv_i\leq \sum_{i\geq 3}3(i-2)v_i\leq 3(a-2|V_{\gamma}^2|)\leq 6(a-v+s+1).
\end{align}
where the last inequality is from \eqref{eq:imbalance}. Similarly, 
\begin{align} \label{eq:bad_vertex_count}
v_{\geq 3}\leq a-2|V_{\gamma}^2|\leq 2(a-v+2s).
\end{align}
 Since each $\gamma_i$ is tangle-free, each bad vertex can be visited at most twice for each $\gamma_i$. Therefore each bad vertex can be visited at most $4s$ times by $\gamma$, which implies
\begin{align}\label{eq:claim_bad}
     \sum_{e\in T_{\gamma}, \text{bad}} m_e\leq 4s\cdot  v_{\geq 3}\leq 8s(a-v+2s).
\end{align}

From \eqref{eq:dv}, we obtain
\begin{align}\label{eq:bad_me}
     \left |
 \mathbb E\prod_{e\in T_{\gamma},\text{bad}}\underline{X}_e^{m_e}\right|\leq \left(1+4p \right)^{8s(a-v+2s)}2^{6(a-v+s+1)}p^{\sum_{v\in V_{\gamma}^2} d(v)}.
\end{align}
Since \[\sum_{v\in V_{\gamma}^2, \text{good}}\deg(v)+\sum_{v\in V_{\gamma}^2} d(v)=a,\]
with \eqref{eq:bad_me}, we can simplify  \eqref{eq:trace_tildeme} to be 
\begin{align}
    \sum_{\gamma\in \overline W_{k,s}}\mathbb E\left[\prod_{e\in T_{\gamma}}\underline{X}_e^{k_e}\underline{A}_{e}^{\tilde m_e}\right]
    &\leq \sum_{\gamma\in \overline W_{k,s}}  \left(1+4p \right)^{8s(a-v+2s)}2^{6(a-v+s+1)}p^{a-4ks}\prod_{e\in E_\gamma}|M_e|^{\tilde m_e},\label{eq:1plusp}
\end{align}
where for each $e\in E_{\gamma}$,  with abuse of notation, $\tilde{m}_e$ is defined as the number of visits of $\gamma$ to the edge $e$ excluding $(\gamma_{i,0},\gamma_{i_1}), (\gamma_{i,1},\gamma_{i_2}), i\in [2s]$.

Define $\mathcal W_{k,s}(v,a)$ to be the set of equivalence classes of paths in $\overline{W}_{k,s}$ with $v$ distinct vertices and $s$ distinct edges. The following lemma holds.
\begin{lemma}\label{lem:Wks}
 Let $v,e$ be integers such that $a-v+1\geq 0$. Then
 \[ |\mathcal W_{k,s}(v,a)|\leq  (4(k+1)s)^{6s(a-v+1)+2s}.
 \]
\end{lemma}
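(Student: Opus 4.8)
The strategy is the standard encoding argument from the trace method in \cite{bordenave2018nonbacktracking, stephan2020non, stephan2022sparse}: I will bound the number of equivalence classes of paths in $\overline W_{k,s}$ with a fixed vertex count $v$ and edge count $a$ by describing how to reconstruct such a path from a small amount of ``bookkeeping'' data, and then counting the possible values of that data. Concretely, think of traversing the concatenated walk $\gamma_1, \dots, \gamma_{2s}$ edge by edge; a canonical relabeling lets us assume the vertices are discovered in increasing order, so each step is one of three types: (a) a \emph{tree step}, discovering a fresh vertex; (b) a step to an already-seen vertex along an already-seen edge; (c) a step to an already-seen vertex along a \emph{fresh} edge (an ``excess'' edge). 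Since the walk, after canonical relabeling, is determined by the sequence of step-types together with, at each non-tree step, the identity of the endpoint among the $\leq v$ visited vertices, the only real cost is controlling the number of type-(b) and type-(c) steps.

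The key combinatorial inputs are: the total length of $\gamma$ is $2s(2k+2)$, so there are at most that many steps; the number of fresh edges is exactly $a$, and of those, $v-1$ are tree edges (since $G_\gamma$ is connected) while $a-v+1$ are excess edges closing a cycle; and — crucially — the walk is a concatenation of $2s$ \emph{non-backtracking} pieces that are each \emph{tangle-free}, together with the boundary conditions \eqref{eq:summand}. The tangle-free + non-backtracking structure is what forces the number of ``surprise'' steps (type (b) and (c)) to be $O(s(a-v+1)) + O(s)$ rather than proportional to the full length: within a single tangle-free non-backtracking segment, once you leave the explored tree along a non-tree edge you must return to the tree (or to the at most one cycle of the segment) in a controlled way, so each segment contributes only a bounded number of surprises per unit of excess. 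Following the bookkeeping in \cite{stephan2020non} adapted to our length-$(2k+2)$ segments, one gets at most $6s(a-v+1) + 2s$ surprise steps, and at each surprise step there are at most $v \leq 2s(2k+2) \le 4(k+1)s$ choices for the landing vertex (and $O(1)$ choices for the step-type, absorbed into the base). Hence $|\mathcal W_{k,s}(v,a)| \le (4(k+1)s)^{6s(a-v+1)+2s}$, which is exactly the claimed bound.

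In more detail, the steps in order are: (1) fix the canonical relabeling of vertices of $G_\gamma$ by first-visit time, reducing ``equivalence class'' to a concrete finite object; (2) classify each of the $2s(2k+2)$ edge-traversals as tree / seen-edge / excess, noting $\#\{\text{excess edges}\} = a-v+1$; (3) using non-backtracking within each of the $2s$ segments, argue that between two consecutive surprise steps the walk is forced (it proceeds deterministically down the tree until it must backtrack through a previously recorded vertex), so the walk is determined by the ordered list of surprise steps plus the segment-boundary data; (4) using tangle-freeness of each segment, bound the number of surprise steps per segment and sum over the $2s$ segments and the $2s$ boundary conditions in \eqref{eq:summand} to get the total $\le 6s(a-v+1)+2s$; (5) bound the data at each surprise step by $v \le 4(k+1)s$ and multiply. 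The main obstacle — and the place where this paper differs from \cite{bordenave2018nonbacktracking, stephan2020non} — is step (3)/(4): because distinct triplets in $\vec E$ are \emph{not} independent (an edge can be shared by two triplets), the notion of ``edge visited'' and ``triplet visited'' must be tracked separately, and one has to be careful that the reconstruction argument is run at the level of the underlying bipartite multigraph of edges (as the lemma statement's ``$a$ distinct edges'' already signals) rather than at the level of triplets; making the surprise-counting argument go through in this edge-based accounting, while still exploiting the non-backtracking/tangle-free structure that is naturally phrased in terms of the triplet walk, is the technical crux.
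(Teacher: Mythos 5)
Your proposal is correct and follows essentially the same route as the paper, whose entire proof is a one-line citation of \cite[Lemma 17]{bordenave2018nonbacktracking} applied to the graph spanned by the $2s$ concatenated tangle-free non-backtracking segments of length $2k+2$; your sketch simply unpacks the vertex-relabeling and surprise-step encoding argument behind that lemma. The only caveat is that the ``non-independence of distinct triplets'' you flag as the technical crux plays no role in this purely combinatorial count, which lives entirely at the level of the underlying bipartite graph $G_\gamma$ --- that issue only surfaces later, in the expectation bounds, where the paper handles it via the decoupling inequality \eqref{eq:decoupling_inequality} and the good/bad triplet classification.
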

 \begin{proof}[Proof of Lemma \ref{lem:Wks}]
We can use the estimation from \cite[Lemma 17]{bordenave2018nonbacktracking} and notice that each graph is spanned by $2s$ concatenations of non-backtracking walks of length $2k+2$. 
 \end{proof}
 
 We now bound the contribution of paths in each equivalence class.
 \begin{lemma}\label{lem:counting_lemma}
 Let $\gamma\in \overline W_{k,s}$ such that $|V_{\gamma}|=v, |E_{\gamma}|=a$. We have 
 \[ \sum_{\gamma': \gamma'\sim \gamma }\prod_{e\in E_{\gamma'}} |M_e|^{\tilde m_e}\leq K^{6(a-v)+12s+2sk}(\sqrt{mn})^{-4sk+v}\rho^{2sk}\left(\sqrt\frac{m}{n}\right)^{a-v}.
 \]
 \end{lemma}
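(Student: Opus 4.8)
The plan is to bound the sum over $\gamma'\sim\gamma$ by performing an explicit exploration of a representative path, estimating at each step the number of choices together with the weight $|M_e|$ picked up. I would fix a representative $\gamma$ of the equivalence class with vertex set of size $v$ and edge set of size $a$, and reveal the edges of $G_{\gamma'}$ in the order they are first traversed by the concatenated walk $\gamma'$. Since $\gamma$ is non-backtracking and tangle-free along each of the $2s$ strands, the only places where a genuine choice of a new vertex occurs are the first-visit steps; repeated visits are forced by the equivalence class (up to the combinatorial data already accounted for in Lemma \ref{lem:Wks}). At each first-visit step that lands on a new vertex we get a factor of at most $\sqrt{m}$ or $\sqrt{n}$ for the choice of label, and we must balance this against the weight $|M_{e}|\leq L/\sqrt{mn} = K^{1/2}\sqrt{\rho}/\sqrt{mn}$ that the corresponding edge contributes through its multiplicity $\tilde m_e \geq 1$.

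The key accounting steps, in order, would be: (i) classify the $a$ edges of $G_{\gamma'}$ into the $v-1$ tree edges of a spanning tree and the $a-v+1$ excess edges; (ii) for tree edges, pair each new left (resp.\ right) vertex discovered with the edge used to discover it, so that the $\sqrt{m}$ or $\sqrt{n}$ label-choice factor is absorbed — using the crucial bipartite bookkeeping that left and right vertices are not discovered symmetrically, which is exactly what produces the asymmetric factor $(\sqrt{m/n})^{a-v}$ rather than $1$; (iii) for the $a-v+1$ excess edges, no new vertex is discovered, so each such edge contributes only its weight, bounded using $|M_e|\le L/\sqrt{mn}$ and converting powers of $L$ into powers of $K$ and $\rho$ via \eqref{eq:def_K}; (iv) track the triplet weights carrying $\underline X_e$: there are $2sk$ edge-slots carrying $M$-weights in total (from $\sum_e \tilde m_e = 2sk$, as already used in \eqref{eq:1plusp}), and each such weight is at most $L/\sqrt{mn}$, giving the factors $\rho^{2sk}$, $(\sqrt{mn})^{-4sk}$ (the $-4sk$ rather than $-2sk$ coming from the $p^{-4ks}$ already extracted) and $K^{2sk}$; (v) collect the leftover factors of $K$ and $L$ from edges of multiplicity larger than one and from the excess edges into the stated exponent $6(a-v)+12s+2sk$, using $\tilde m_e\le$ (total length) to bound how many surplus weights can appear. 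Assembling (i)--(v) yields exactly
\[
 \sum_{\gamma':\gamma'\sim\gamma}\prod_{e\in E_{\gamma'}}|M_e|^{\tilde m_e}
 \le K^{6(a-v)+12s+2sk}(\sqrt{mn})^{-4sk+v}\rho^{2sk}\left(\sqrt{\tfrac mn}\right)^{a-v}.
\]

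The main obstacle I expect is step (ii): the asymmetric discovery of left and right vertices in a bipartite walk. In the non-bipartite trace method one simply pairs each new vertex with one new tree edge and the factors cancel; here, because walks alternate between $V_1$ and $V_2$ and the path lengths $2k+2$ are even, the number of newly discovered left vertices and right vertices along a tree branch need not match, and the "extra" right vertices are exactly the source of the $(\sqrt{m/n})^{a-v}$ factor — but one must be careful that this exponent is $a-v$ and not, say, $|V_\gamma^2|-|V_\gamma^1|$, which requires combining the imbalance bound \eqref{eq:imbalance} with the edge/vertex count. A secondary subtlety is making sure that edges shared between distinct triplets (the "bad" edges) are not double-counted in the weight product: since $\tilde m_e$ was defined in \eqref{eq:1plusp} to exclude the boundary half-edges $(\gamma_{i,0},\gamma_{i,1})$ and $(\gamma_{i,1},\gamma_{i,2})$, one has to check that the remaining $M$-weights still sum to $2sk$ over all edges, which is where the identity $\sum_{e\in T_\gamma}\tilde m_e = 2sk$ is invoked. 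Once the discovery procedure is set up correctly, the rest is a routine but bookkeeping-heavy product estimate.
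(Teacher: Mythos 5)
There is a genuine gap at your step (ii), which you yourself flag as the main obstacle. You propose to control the exponent of $\sqrt{m/n}$ via the imbalance bound \eqref{eq:imbalance}, but that bound only gives $|V_\gamma^2|-|V_\gamma^1|\leq a-v+1+2s$. Feeding this into the label count $n^{v_1}m^{v_2}=(\sqrt{mn})^{v}\left(\sqrt{m/n}\right)^{v_2-v_1}$ leaves an extra factor $\alpha^{s+1/2}$ relative to the claimed $\left(\sqrt{m/n}\right)^{a-v}$; since $\alpha=m/n$ diverges (e.g.\ $\alpha=n$ for $3$-tensors), this survives the $2s$-th root as $\sqrt{\alpha}$ and destroys the bound. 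The fact you actually need is $2|V_\gamma^2|\leq a$: every vertex of $V_\gamma^2$ is the middle point of a traversed triplet $(x,y,x')$ with $x\neq x'$, so it has degree at least $2$ in $G_\gamma$ (this is the remark ``there are no vertices of degree $1$ in $V_\gamma^2$'' made before \eqref{eq:trace_tildeme}). That yields $v_2-v_1=2v_2-v\leq a-v$, which is exactly the exponent $a-v$; the imbalance bound \eqref{eq:imbalance} is the wrong tool here. Two smaller slips: over \emph{edges} one has $\sum_e\tilde m_e=4sk$, not $2sk$ (the value $2sk$ is the count over triplets), and a newly discovered right vertex costs a label factor $m$, not $\sqrt m$.

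Beyond this, your route is genuinely different from the paper's, and it is worth understanding what each buys. You bound every weight entrywise by $|M_e|\leq L/\sqrt{mn}=\sqrt{K\rho/(mn)}$ and then count labels; once the imbalance step is repaired this does give $K^{2sk}\rho^{2sk}(\sqrt{mn})^{v-4sk}\left(\sqrt{m/n}\right)^{a-v}$, which satisfies the display of the lemma as stated. The paper instead applies the entrywise bound only to singly-visited edges and to the surplus multiplicity beyond two; for the doubly-visited edges it keeps $Q_e=\sqrt{mn}\,M_e^2$, glues degree-two vertices into chains, and sums over intermediate labels with the matrix-moment bounds \eqref{eq:entrywisePhi}--\eqref{eq:bipartitemoment}, i.e.\ through the operator norm $\rho=\norm{Q}$. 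The payoff is the first line of \eqref{eq:Me_bound_genus}, with $K$-exponent $6(a-v)+12s+2sk+h-a=O((a-v)+s)$ when most edges are visited exactly twice, and it is this sharper form (not the lemma's display) that is inserted into \eqref{eq:genus_trace}. Your $K^{2sk}$, carried through the trace computation, would leave a factor $K^{k}=K^{\eps\log_d(n)}$ in the bound on $\norm{\underline{B}^{(k)}}$, which is not polynomial in $K$ as Theorem \ref{thm:spectral_structure_Bl} requires. So your argument can prove the statement as literally written, but not the version of it that the rest of the proof consumes.
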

 \begin{proof}[Proof of Lemma \ref{lem:counting_lemma}]
 Let $\tilde{\gamma}=(\tilde{\gamma}_1,\dots, \tilde{\gamma}_{2s})$ be the sequence of paths from $\gamma\in \overline W_{k,s}$ such that $\tilde{\gamma}_i=(\gamma_{i,2},\dots, \gamma_{i, 2k+2})$, i.e., we remove the initial two steps (the first triplet) in each $\gamma_i, 1\leq i\leq 2s$. From the boundary condition, the associated graph spanned by $\tilde{\gamma}$, denoted by $G_{\tilde \gamma}$, is connected and 
 \[ |E_{\tilde \gamma}|= a, \quad |V_{\tilde \gamma}|=v.\]

 Recall $\tilde{m}_e$ is the number of visits of the edge $e\in E_{\tilde \gamma}$, we have
 \[\sum_{e\in E_{\tilde{\gamma}}}\tilde{m}_e=4ks.\]

 
 Let $H$ be the set of edges $e\in E_{\gamma}$ such that $\tilde{m}_e=1$ and denote $h=|H|$. Note that by our construction, any edge in $H$ is either one of the two initial  or ending edges in $\tilde{\gamma}_i, i\in [2s]$, or from a bad triplet (note that it's impossible to have both edges in a triplet belong to $H$, then the contribution is zero by independence). Therefore with \eqref{eq:bad_edge_multiplicity},
 \begin{align}\label{eq:upperbound_h}
     h\leq 4s+\sum_{v\in V_{\gamma}^2,\text{bad}} d(v) \leq 10s+6(a-v+1).
 \end{align}
 We also have 
 \[ \sum_{e\in E_{\tilde\gamma}\setminus H} \tilde{m}_e=4ks-h,\]
 which implies
 \begin{align}
 \sum_{\gamma': \gamma'\sim \gamma }\prod_{e\in E_{\tilde \gamma}}|M_e|^{\tilde m_e}&\leq  \left( \sqrt\frac{K \rho}{mn}\right)^{4sk-2a+2h}\sum_{\gamma': \gamma'\sim \gamma }\prod_{e\in E_{\tilde\gamma}\setminus H} \frac{Q_e}{\sqrt{mn}}, \notag\\
 &=(K\rho)^{2sk+h-a}(\sqrt{mn})^{a-h-4sk}\sum_{\gamma': \gamma'\sim \gamma }\prod_{e\in E_{\tilde\gamma}\setminus H} {Q_e},\label{eq:Me_bound2}
 \end{align}
 where $Q_e$ is defined in \eqref{eq:def_Q_rho}, and we use \eqref{eq:def_K} to get $|M_e|\leq \sqrt\frac{K\rho}{mn}$.
Next, we  consider an upper bound on 
 \[\sum_{\gamma': \gamma'\sim \gamma }\prod_{e\in E_{\tilde\gamma}\setminus H} {Q_e}.\] 
Let $G'_{\tilde \gamma}$ be the graph spanned by all edges in $E_{\tilde \gamma}\setminus H$. Let $t_i, t_{\geq i}$ be the number of vertices in $V_{\tilde \gamma}'$ with a degree equal to $i$ and at least $i$, respectively. We then have 
\begin{align} \label{eq:degree_count}
v'=t_1+ t_2+ t_{\geq 3} &\geq  v-h, \\
t_1+2t_2+3t_{\geq 3}&\leq \sum_{k\geq 1}kt_k=2(a-h).  \notag
\end{align}
The first inequality is because removing any edge in $H$ can delete at most one vertex from $G_{\tilde\gamma}$. Since  degree-1 vertices can only appear at the endpoints of each $\tilde{\gamma}_i, i\leq 2s$, which are $\gamma_{i,2}, \gamma_{i,2k+2}, i\in [2s]$, we must have  $t_1\leq 2s$. Together with \eqref{eq:degree_count}, we obtain that
\begin{align}\label{eq:t3}
t_{\geq 3}\leq 2( a- v)+t_1\leq 2( a- v)+2s.
\end{align}
We  reduce the graph $G_{\tilde \gamma}'$ to  a multi-graph $\hat{G}_{\tilde \gamma }$ by gluing vertices of degree 2 while keeping the endpoints $\gamma_{i,2}, \gamma_{i,2k+2}, i\in [2s]$. Let $\hat v$ be the number of vertices in $\hat G_{\tilde \gamma}$.
Next, we define the edge set $\hat{E}_{\tilde \gamma}$ for $\hat{G}_{\tilde \gamma}$. We partition edges in ${E}'_{\tilde\gamma}$ into $\hat {a}$ sequences of edges $\hat{e}_j=(e_{j_1},\dots,e_{j_{q_j}})$ where $e_{j,t}=(x_{j,t-1},x_{j,t})\in  E_{\tilde \gamma}'$, $x_{j,0}, x_{j,q_j}\in \hat{V}_{\tilde \gamma}$ and $x_{j_t}\not\in \hat V_{\tilde \gamma}$ for $1\leq t\leq q_j-1$, and let $(x_{j,0},x_{j,q_j})$ be an edge in $\hat{E}_{\tilde \gamma}$.
This implies 
\[ \sum_{j=1}^{\hat a} q_j= a-h.\]
From the definition of $\hat{G}_{\tilde \gamma}$, the genus of the graph is preserved,
\begin{align} \label{eq:genus_hat}
\hat{a}-\hat{v}&= {a}'-{v}'=a-h-v'\geq a-v-h.
\end{align}
and since $a'=a-h$, $v'\geq v-h$, 
\begin{align}\label{eq:genushat}
\hat{a}-\hat{v}=a'-v'\leq a-v.
\end{align}

Moreover, since the number of degree-2 vertices in $\hat{G}_{\tilde \gamma}$ is bounded by the number of distinct endpoints in $\tilde{\gamma}_i, i\in [2s]$ , we have  from \eqref{eq:t3},
\begin{align}\label{eq:hatv}
\hat{v}&\leq t_1+t_{\geq 3}+2s \leq 2(a-v)+6s.
\end{align}

On the other hand, removing edges in $H$ does not increase the genus, so $a'-v'\leq a-v$. And with \eqref{eq:hatv}, it implies 
\begin{align}\label{eq:upperbound_hat_a}
    \hat{a}=\hat{a}-\hat{v}+\hat{v} =a'-v'+\hat{v}\leq a-v+\hat{v}\leq 3(a-v)+6s.
\end{align}


We classify the edges in $\hat{E}_{\tilde \gamma}$ according to endpoints $(x_{j,0}, x_{j,q_j})$ into 3 different groups $\hat{E}_1, \hat{E}_2$, and $\hat{E}_3$:
(1) both are in $V_{\gamma}^1$, (2) both are in $V_{\gamma}^2$, (3) one from  $V_{\gamma}^1$ and one from $V_{\gamma}^2$. 



Let $y_1,\dots,y_{\hat{v}}$ be the elements in $\hat{V}_{\gamma}$, where the first $\hat{v}_1$ vertices are from $\hat{V}_{\tilde\gamma}^1$,  and let $a_j,b_j$ be the indices such that $x_{j,0}=y_{a_j}, x_{j,q_j}=y_{b_j}$. Then 
\begin{align*}
 \sum_{\gamma': \gamma'\sim \gamma }\prod_{e\in E'_{\tilde\gamma}} {Q_e}&\leq \sum_{(y_1,\dots,y_{\hat{v}})\in [n]^{\hat{v}_1}\times [m]^{\hat v_2}}\prod_{\hat e_j \in \hat{E}_1}(QQ^*)_{y_{a_j}y_{b_j}}^{q_j/2} \prod_{\hat e_j \in \hat{E}_2}(Q^* Q)_{y_{a_j}y_{b_j}}^{q_j/2}\prod_{\hat e_j \in \hat{E}_3} \frac{K^2 \rho^{q_j}}{\sqrt{nm}}\\
& \leq  \sum_{(y_1,\dots,y_{\hat{v}})\in [n]^{\hat{v}_1}\times [m]^{\hat v_2}}\prod_{\hat e_j \in \hat{E}_1}\frac{K^2 \rho^{q_j}}{n} \prod_{\hat e_j \in \hat{E}_2}\frac{K^2 \rho^{q_j}}{m}\prod_{\hat e_j \in \hat{E}_3} \frac{K^2 \rho^{q_j}}{\sqrt{nm}} \\
&=K^{2\hat a}\rho^{ a-h}n^{\hat{v}_1-|\hat{E}_1|-\frac{1}{2} |\hat{E}_3|} m^{\hat v_2-|\hat{E}_2|-\frac{1}{2}|\hat{E}_3|}\\
&=(K^2)^{\hat a}\rho^{ a-h} (\sqrt{mn})^{\hat v-\hat{a}}\left(\sqrt \frac{m}{n}\right)^{\hat{v}_2-|\hat E_2|-\hat{v}_1+ |\hat{E}_1|}\\
&\leq (K^2)^{3(a-v)+6s}\rho^{a-h}(\sqrt{mn})^{v+h-a}\left(\sqrt\frac{m}{n}\right)^{\hat{a}-\hat{v}-|\hat{E}_3|-2(|\hat{E}_2|-\hat{v}_2)}\\
&\leq (K^2)^{3(a-v)+6s}\rho^{a-h}(\sqrt{mn})^{v+h-a}\left(\sqrt\frac{m}{n}\right)^{a-v-|\hat{E}_3|-2|\hat{E}_2|+2\hat{v}_2}
\end{align*}
where the first inequality is from \eqref{eq:bipartitemoment},   the second inequality is from \eqref{eq:entrywisePhi} and \eqref{eq:tildeQ}, the third inequality is from \eqref{eq:upperbound_hat_a} and \eqref{eq:genus_hat}, and the last one is from \eqref{eq:genushat}. Since all vertices in $\hat{V}_{\tilde\gamma}^2$ have degree at least 3,
\begin{align}\notag
-|\hat{E}_3|-2|\hat{E}_2|+2\hat{v}_2=\sum_{v\in \hat{V}_{\tilde\gamma}^2} (2-\deg (v))\leq 0.
\end{align}
We get
\begin{align}\notag
   \sum_{\gamma': \gamma'\sim \gamma }\prod_{e\in E_{\tilde\gamma}\setminus H} {Q_e}\leq (K^2)^{3(a-v)+6s}\rho^{a-h}(\sqrt{mn})^{v+h-a}\left(\sqrt\frac{m}{n}\right)^{a-v}.
\end{align}

Therefore from \eqref{eq:Me_bound2},
\begin{align}\label{eq:Me_bound_genus}
    \sum_{\gamma': \gamma'\sim \gamma }\prod_{e\in E_{\tilde \gamma}}|M_e|^{\tilde m_e}&\leq K^{6(a-v)+12s+2sk+h-a}(\sqrt{mn})^{-4sk+v}\rho^{2sk}\left(\sqrt\frac{m}{n}\right)^{a-v}\\
    &\leq K^{12(a-v+1)+22s+2sk-a}(\sqrt{mn})^{-4sk+v}\rho^{2sk}\left(\sqrt\frac{m}{n}\right)^{a-v}, \notag
\end{align}
where the last inequality is from \eqref{eq:upperbound_h} and the fact that $K\geq 1$.
This completes the proof of Lemma \ref{lem:counting_lemma}.
 \end{proof}

 We take 
 \begin{align}\label{eq:ks}
     k\leq \lfloor \log n\rfloor, \quad s=\left\lfloor \frac{\log \left(\frac{n}{8(d\vee K)^3 K^9}\right)}{8\log\log n}\right\rfloor
 \end{align}
 and let $g=a-v+1$. 
 We have the first simple constraint on $v,a$ as
 \[ 0\leq v-1\leq a\leq 4(k+1)s. \]
 Moreover, for any edge in $G_{\gamma}$ not included in any bad triplets must be visited at least twice to have a nonzero contribution in \eqref{eq:trace_expansion_counting}. And for any $\gamma$ with a nonzero contribution in \eqref{eq:trace_expansion_counting}, the number of edges visited only once is bounded by $6(g+s)$ from \eqref{eq:bad_edge_multiplicity}. This implies
$
     a\leq 2ks+3g+5s.$

From \eqref{eq:trace_expansion_counting}, \eqref{eq:1plusp} and \eqref{eq:Me_bound_genus},
\begin{align*}
      \dE \|\underline{B}^{(k)}\|^{2s}\leq & \sum_{a=1}^{4s(k+1)}\sum_{v=1}^{a+1} \mathbf{1}\{ a\leq 2ks+3g+5s\}|\mathcal W_{k,s}(v,a)|\\
      &\cdot \left(1+4p \right)^{8s(a-v+2s)}2^{6(a-v+s+1)}p^{a-4ks}\\
      &\cdot K^{12(a-v+1)+22s+2sk-a}(\sqrt{mn})^{-4sk+v}\rho^{2sk}\left(\sqrt\frac{m}{n}\right)^{a-v}\\
      \leq &n(4(k+1)s)^{2s+2}\left(1+4p \right)^{16s^2}2^{6s}K^{22s}\theta_1^{4sk}\\
      &\cdot  \sum_{g=0}^{\infty} \sum_{a=1}^{(2ks+3g+5s)\wedge 4(k+1)s}\left( \frac{K}{d}\right)^{2sk-a}\left( \frac{2K^{12}\left(1+4p\right)^{8s} (4(k+1)s)^{6s}}{ n}\right)^g \label{eq:genus_trace}
      \end{align*}

      We now consider two cases:  

(1) When $K\geq d$,  
 \eqref{eq:genus_trace} can be further bounded by
\begin{align}
    &n(4(k+1)s)^{2s+3}\left(1+4p \right)^{16s^2}2^{6s}K^{22s}\theta_1^{4sk}\cdot  \left( \frac{K}{d}\right)^{2sk}\sum_{g=0}^{\infty}  \left( \frac{2K^{12}\left(1+4p\right)^{8s} (4(k+1)s)^{6s}}{ n}\right)^g.
\end{align}

(2) When $K\leq d$, \eqref{eq:genus_trace} is bounded by
\begin{align}
n(4(k+1)s)^{2s+3}\left(1+4p\right)^{16s^2}2^{6s}K^{22s}\theta_1^{4sk} (d/K)^{5s}\sum_{g=0}^{\infty} \left( \frac{2d^3K^{9}\left(1+4p\right)^{8s} (4(k+1)s)^{6s}}{ n}\right)^g. \notag
\end{align}

Therefore in both cases, \eqref{eq:genus_trace} is bounded by 
\begin{align}
&n(4(k+1)s)^{2s+3}\left(1+4p\right)^{16s^2}2^{6s}K^{22s}\theta^{4sk} \left(1\vee \frac{d}{K}\right)^{5s}   \notag\\
&\cdot\sum_{g=0}^{\infty} \left( \frac{2(d\vee K)^3K^{9}\left(1+4p\right)^{8s} (4(k+1)s)^{6s}}{ n}\right)^g.  \notag
\end{align}

Since $d\leq n^{1/12}$, from our choices of $k$ and $s$ in \eqref{eq:ks}, for  $n\geq C_1K^{16}$, we have 
\[\left(1+4p\right)^{8s}\leq 2, \quad 
\frac{(d\vee K)^3K^{9} (4(k+1)s)^{6s}}{n}\leq \frac{1}{8},   \quad n^{\frac{1}{2s}}\leq \log(n)^5.
\]
which implies
\begin{align}
       \dE \|\underline{B}^{(k)}\|^{2s}\leq 2n (4(k+1)s)^{2s+3}2^{2s} K^{22s} \theta^{4sk}\left(1\vee \frac{d}{K}\right)^{5s}.  \notag
\end{align}
Then by Markov's inequality and the assumption $C\geq 1$,  with probability at least $1-n^{-1/2}$, the following bound holds:
\begin{align}
    \|\underline{B}^{(k)}\|\leq K^{11} d^{5/2}\log(n)^{19/2}\theta^{2k}.  \notag
\end{align}
This finishes the proof for \eqref{eq:Delta}.










\section{Additional trace methods} \label{sec:additional_trace}
\subsection{Proof of \eqref{eq:Rk} on $R_k^{(\ell)}$}
This is similar to the proof of \eqref{eq:Delta}. We only emphasize the main difference. For any $0\leq k\leq \ell,$
\begin{align}\label{eq:R_kl_bound}
   & \|R_k^{(\ell)}\|^{2s}\leq \textnormal{tr}\left[R_k^{(\ell)}{R_k^{(\ell)}}^*\right]^{s} \notag\\
    &=\sum_{\gamma\in T_{\ell,s,k}}\prod_{i=1}^{2s} \underline{X}_{\gamma_{i,0}\gamma_{i,2}\gamma_{i,1}}\prod_{t=1}^{k-1} \underline{A}_{\gamma_{i,2t}\gamma_{i,2t+2},\gamma_{i,2t+1}}M_{\gamma_{i,2k}\gamma_{i,2k+2},\gamma_{i,2k+1}}\prod_{t=k+1}^{\ell}A_{\gamma_{i,2t}\gamma_{i,2t+2},\gamma_{i,2t+1}}.
\end{align}
where $T_{\ell,s,k}$ is the set of all $(\gamma_1,\dots, \gamma_{2s})$ such that for all $i$, 
\[\gamma_i^1=(\gamma_{i,0},\gamma_{i,1},\gamma_{i,2},\dots, \gamma_{i,2k}) \quad \text{ and } \quad  \gamma_i^2=(\gamma_{i,2k+2},\dots, \gamma_{i,2\ell+1})\] are non-backtracking tangle-free and $\gamma_{i}$ is non-backtracking tangled with the same boundary condition as in \eqref{eq:summand}.

 For any $\gamma\in T_{\ell,s,k}$, denote $G_{\gamma}=(V_{\gamma}, E_{\gamma})$ as the union of the graphs $G_{\gamma_{i}^j}$ for $i\in [2s]$ and $j=1,2$.   Note that the edges $(\gamma_{i,2k},\gamma_{i,2k+1},\gamma_{2k+2})$ are not taken into account in $G_{\gamma}$. For any fixed $\gamma_{i}^1,\gamma_{i}^2,  1\leq i\leq 2s,$ there are at $m^{2s}$ many ways to choose all $\gamma_{i,2k}, 1\leq i\leq 2s$.  Let $v=|V_{\gamma}|$ and $a=|E_{\gamma}|$.
 Since $\gamma_i$ is tangled, each connected component in $G_{\gamma_i}$ contains a cycle, therefore 
 \begin{align}\label{eq:R_va}
 v\leq a.
 \end{align}

 Define the set $\mathcal T_{\ell,s,k}(v,a)$ as the set of all equivalence classes of $T_{\ell,s,k}$ with given $v$ and $a$. The following lemma from \cite{bordenave2018nonbacktracking} holds.
 \begin{lemma} \label{lem:TLSH}
 Let $v\leq a$. Then $|\mathcal T_{\ell,s,k}(v,a)| \leq (8(\ell+1)s)^{12s(a-v+1)+8s}$. 
 \end{lemma}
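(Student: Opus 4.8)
\textbf{Proof proposal for Lemma \ref{lem:TLSH}.} This is the exact counterpart, for the tangled paths collected in $T_{\ell,s,k}$, of the counting estimate of Lemma \ref{lem:Wks}, and I would prove it by the same device: the canonical-encoding argument of \cite[Lemma 17]{bordenave2018nonbacktracking}, adapted so that the basic building blocks are the $4s$ \emph{tangle-free} half-paths $\gamma_1^1,\gamma_1^2,\dots,\gamma_{2s}^1,\gamma_{2s}^2$ rather than $2s$ tangle-free walks. The key observation is that, up to the simultaneous relabelling of $[n]$ and $[m]$ that defines an equivalence class, a sequence $\gamma=(\gamma_1,\dots,\gamma_{2s})\in T_{\ell,s,k}$ with $|V_\gamma|=v$, $|E_\gamma|=a$ is completely determined by the isomorphism type of the connected graph $G_\gamma$ together with the combinatorial route of the $4s$ segments on it, each of which is a non-backtracking walk of length at most $2(\ell+1)$ whose span contains at most one cycle. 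So it suffices to bound the number of such routes.

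To do so I would traverse the $4s$ segments in a fixed order, assigning to the vertices of $G_\gamma$ the labels $1,2,\dots$ in order of first appearance, and build a word that records the route. At each step one writes down a bit distinguishing a \emph{tree step} (which reveals a previously unseen vertex, hence a new edge) from an \emph{important step} (everything else: landing on an already-labelled vertex, reusing an edge, or opening a segment at a vertex that has already appeared). Tree steps cost nothing beyond the bit; at each important step one additionally records the label of the relevant already-seen vertex (and, if needed, which of the $\le 2$ parallel traversals of the incident edge is meant), which costs at most the number of vertices revealed so far, hence at most the total length $4s\cdot 2(\ell+1)=8(\ell+1)s$. Since the positions of the important steps determine the bit-string, the number of routes is at most $\bigl(8(\ell+1)s\bigr)^{N}$, where $N$ is the number of important steps.

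The content of the lemma is therefore the bound $N=O\bigl(s\,(a-v+1)+s\bigr)$, which is exactly the bookkeeping of \cite[Lemma 17]{bordenave2018nonbacktracking} and proceeds as in the proof of Lemma \ref{lem:Wks}: away from the $O(a-v+1)$ ``branch'' vertices (those of degree $\ge 3$ in $G_\gamma$, whose number is controlled by the excess $a-v+1$ of the connected graph $G_\gamma$) and the $O(s)$ segment endpoints, the exploration is forced, so a real choice — i.e. an information-carrying important step — can occur only at such a vertex, and at each of them only $O(s)$ times, roughly once per segment. The tangle-freeness of each individual segment is used precisely here: within one segment the walk cannot loop inside a portion of $G_\gamma$ that has not yet been recorded, so the charging of important steps to branch vertices and endpoints is legitimate. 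Tracking the constants as in Lemma \ref{lem:Wks} — where $2s$ tangle-free walks give the exponent $6s(a-v+1)+2s$, and here the $4s$ segments together with the extra endpoints created by the cuts inflate both constants — yields $N\le 12s(a-v+1)+8s$ and hence the claim.

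The step I expect to require the most care, and which genuinely distinguishes this from the $W_{k,s}$ situation, is the handling of the ``cut'' at the middle triplet. The full walks $\gamma_i$ are tangled, so the encoding must be run on the $4s$ tangle-free halves and never on a $\gamma_i$ as a whole; moreover the middle vertex $\gamma_{i,2k+1}$ and its two incident edges are, by construction, excluded from $G_\gamma$, so the start vertex $\gamma_{i,2k+2}$ of $\gamma_i^2$ is a free endpoint whose label is not implied by the previously encoded segments and must be recorded — this is the origin of the enlarged additive term $8s$ in the exponent. One also has to confirm that $G_\gamma$ is connected (so that $a-v+1\ge 0$ and, under the hypothesis $v\le a$, even $a-v+1\ge 1$, making the encoding well-defined and terminating): this follows from the ring boundary conditions \eqref{eq:summand} together with the convention $\gamma_{2s+1}=\gamma_1$, which splice the outer endpoints of consecutive segments into one connected skeleton off which each $\gamma_i^1,\gamma_i^2$ hangs.
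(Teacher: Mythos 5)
Your overall strategy is the right one and is essentially what the paper intends: the paper gives no proof of Lemma \ref{lem:TLSH} at all, simply importing it from \cite{bordenave2018nonbacktracking} in the same way as Lemma \ref{lem:Wks}, and your reconstruction of the canonical-encoding argument run on the $4s$ tangle-free halves — with the exponent split into a term charged to the excess via the degree-$\geq 3$ vertices and an additive $O(s)$ term for segment endpoints and the free starts $\gamma_{i,2k+2}$ — is the correct adaptation of \cite[Lemma 17]{bordenave2018nonbacktracking}.

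However, your closing paragraph contains a concrete error: $G_\gamma$ is \emph{not} connected in general, and the boundary conditions do not splice the $4s$ segments into one skeleton. Since the middle triplet $(\gamma_{i,2k},\gamma_{i,2k+1},\gamma_{i,2k+2})$ is excised from $G_\gamma$, nothing joins $\gamma_i^1$ to $\gamma_i^2$; the conditions \eqref{eq:summand} only identify the start triplets of $\gamma_{2i}$ and $\gamma_{2i+1}$ (gluing $\gamma_{2i}^1$ to $\gamma_{2i+1}^1$) and the end triplets of $\gamma_{2i-1}$ and $\gamma_{2i}$ (gluing $\gamma_{2i-1}^2$ to $\gamma_{2i}^2$), so the chain of identifications is broken inside every $\gamma_i$ and $G_\gamma$ can have as many as $2s$ components. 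This is exactly why the paper does not obtain $v\leq a$ from connectivity but from the observation, made just before the lemma, that each $\gamma_i$ is tangled and hence every connected component of $G_{\gamma_i}$ contains a cycle; $v\leq a$ then enters the lemma as a hypothesis rather than a consequence of the encoding. Your argument survives this correction — the branch-vertex bound $v_{\geq 3}\leq 2(a-v)+v_1$ requires no connectivity, and the extra component openings (at most $2s$ of them) are absorbed into the additive $8s$ together with the $4s$ segment starts — but the justification as written is false and should be replaced by the per-component cycle argument or simply by invoking the stated hypothesis $v\leq a$.
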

 Taking expectation of \eqref{eq:R_kl_bound} yields
 \begin{align}\label{eq:R_trace_expansion_count}
   \dE  \|R_k^{(\ell)}\|^{2s}&\leq \frac{L^{4s}}{(mn)^{s}}\sum_{\gamma\in \overline T_{\ell,s,k}}\dE\prod_{i=1}^{2s} \underline{X}_{\gamma_{i,0}\gamma_{i,2}\gamma_{i,1}}\prod_{t=1}^{k-1} \underline{A}_{\gamma_{i,2t}\gamma_{i,2t+2},\gamma_{i,2t+1}}\prod_{t=k+1}^{\ell}A_{\gamma_{i,2t}\gamma_{i,2t+2},\gamma_{i,2t+1}},
 \end{align}
 where $\overline T_{\ell,s,k}$ is the subset of $T_{\ell,s,k}$ where each $\gamma \in \overline T_{\ell,s,k}$ has nonzero contribution in the expectation.  For each $\gamma \in \overline{T}_{\ell,s,k}$,
\begin{align}
   & \dE\prod_{i=1}^{2s} \underline{X}_{\gamma_{i,0}\gamma_{i,2}\gamma_{i,1}}\prod_{t=1}^{k-1} \underline{A}_{\gamma_{i,2t}\gamma_{i,2t+2},\gamma_{i,2t+1}}\prod_{t=k+1}^{\ell}A_{\gamma_{i,2t}\gamma_{i,2t+2},\gamma_{i,2t+1}}
    =\dE \prod_{e\in T_{\gamma}}\underline X_{e}^{k_e}\underline{A}_e^{\tilde m_e}A_e^{t_e},  \notag
\end{align}
where $k_e,\tilde m_e,t_e$ is the number of visits of $\gamma$ to a triplet $e$ that carry the weight $\underline{X}_e, \underline{A}_e,$ and $A_e$, respectively and let $m_e=k_e+\tilde{m}_e$. let $T_{\gamma}$ be the set of all triplets from $\gamma$. We call a triplet \textit{bad} if it overlaps another triplet. Separating good and bad triples yields
\begin{align}
   \dE \prod_{e\in T_{\gamma}}\underline X_{e}^{k_e}\underline{A}_e^{\tilde m_e}A_e^{t_e}&=\prod_{e\in T_{\gamma}}\left( \frac{|M_e|}{p^2}\right)^{\tilde m_e+t_e} \left|\dE \prod_{e\in T_{\gamma} \text{ good}}\underline{X}_e^{m_e}X_e^{t_e}\right|\cdot \left|\dE \prod_{e\in T_{\gamma} \text{ bad}}\underline{X}_e^{m_e}X_e^{t_e}\right|  \notag\\
   &\leq p^{-4(\ell-1)s}\prod_{e\in T_{\gamma}}|M_e| ^{\tilde m_e+t_e} \left(\prod_{e\in T_{\gamma} \text{ good}}p^2 \right)\left|\dE \prod_{e\in T_{\gamma} \text{ bad}}\underline{X}_e^{m_e}X_e^{t_e}\right|. \label{eq:R_triplet_expansion}
\end{align}

Let $T_{\gamma}(v)$ be the collection of bad triplets with the same middle point  $v$. 
Let $d(v)$ be the number of distinct edges connected to $v$ from bad triplets. For each $e\in E_{\gamma}$, with abuse of notation,  let ${m}_e,t_e$ be the corresponding number of visits of $\gamma$ to the edge $e$. 
 Applying  inequality \eqref{eq:decoupling_inequality}, we find
\begin{align}
    \left|\dE\prod_{e\in T_\gamma(v)}\underline{X}_e^{m_e}X_{e}^{t_e}\right|
    &\leq  \left(1+4p \right)^{\sum_{e\in T_{\gamma}(v)} m_e} \prod_{e\in E_{\gamma}(v)}\dE |X_e-p|^{m_e}X_e^{t_e}  \notag\\
    &\leq \left(1+4p \right)^{\sum_{e\in T_{\gamma}(v)} m_e} 2^{d(v)}p^{d(v)}. \label{eq:R_dv} 
\end{align}

Let $v_i, v_{\geq i}$ be the number of vertices with degree $i$ (at least $i$, respectively) in $V_{\gamma}^2$.
Same as  \eqref{eq:bad_edge_multiplicity} and \eqref{eq:bad_vertex_count}, we have 
\begin{align}
    \sum_{v\in V_{\gamma}^2, \text{ bad}}d(v) &\leq 6(a-v+s+1) \label{eq:R_dv_upperbound} \\\
    v_{\geq 3}&\leq 2(a-v+2s).  \notag
\end{align}
Since each $\gamma_{i}^j$ is tangle-free, each bad vertex can be visited at most twice for each $\gamma_{i}^j$. Similar to \eqref{eq:claim_bad}, we obtain 
\begin{align}
    \sum_{e\in T_{\gamma},\text{ bad}} m_e\leq 8s\cdot v_{\geq 3}\leq 16s(a-v+2s).  \notag
\end{align}

With the three inequalities above, \eqref{eq:R_dv} and \eqref{eq:R_triplet_expansion} imply the following bound for any $\gamma\in \overline{T}_{\ell,s,k}$:
\begin{align}
     \dE \prod_{e\in T_{\gamma}}\underline X_{e}^{k_e}\underline{A}_e^{\tilde m_e}A_e^{t_e}\leq (1+4p)^{16s(a-v+2s)}2^{6(a-v+s+1)}p^{a-4(\ell-1)s}\prod_{e\in E_{\gamma}}|M_e| ^{\tilde m_e+t_e}.  \notag
\end{align}

Next, we bound the contribution of each equivalence class.
\begin{lemma}\label{lem:Tlsk}
    Let $\gamma\in \overline{T}_{\ell,s,k}(v,a)$. We have 
    \begin{align}\label{eq:R_Me_bound_genus}
         \sum_{\gamma': \gamma'\sim \gamma }\prod_{e\in E_{\gamma'}} |M_e|^{\tilde m_e+t_e}\leq K^{12(a-v+1)+2s(\ell-1)+40s-a}\rho^{2s(\ell-1)}(\sqrt{mn})^{v-4s(\ell-1)}\left(\sqrt{\frac{m}{n}}\right)^{a-v}.
    \end{align}
\end{lemma}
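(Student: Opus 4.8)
The proof follows the same blueprint as Lemma~\ref{lem:counting_lemma}, adapted to the tangled paths occurring in $R_k^{(\ell)}$; I only highlight where the argument must change. Write $\gamma=(\gamma_1,\dots,\gamma_{2s})$ with each $\gamma_i$ split into the tangle-free half-walks $\gamma_i^1$ (length $2k$) and $\gamma_i^2$ (length $2\ell-2k$) separated by the $M$-seam triplet at position $2k$, which by construction contributes no edge to $G_\gamma$. As in the main text I would first peel off the initial $\underline{X}$-triplet of each $\gamma_i$, pass to the reduced graph $G_{\tilde\gamma}$ spanned by the remaining $\underline{A}/A$-weighted edges, and note that $\sum_e(\tilde m_e+t_e)=4s(\ell-1)$; because each half-walk lies in a connected component carrying a cycle, the governing genus constraint is now $v\le a$ (eq.~\eqref{eq:R_va}) rather than $v\le a+1$.

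Next, let $H$ be the set of edges visited exactly once with an $\underline{A}$- or $A$-weight. Such edges can occur only at the at most $8s$ extremities of the $4s$ half-walks $\gamma_i^j$, or inside a bad triplet, so by \eqref{eq:R_dv_upperbound} one gets $h:=|H|\le c\,s+6(a-v+1)$ with an $s$-coefficient roughly double that appearing in \eqref{eq:upperbound_h}. Bounding $|M_e|\le\sqrt{K\rho/(mn)}$ on the edges of $H$ and using $M_e^2=Q_e/(mn)$ from \eqref{eq:Qe} on the remaining edges, I would reduce the sum to $\sum_{\gamma'\sim\gamma}\prod_{e\in E_{\tilde\gamma}\setminus H}Q_e$ at the cost of explicit powers of $K$, $\rho$ and $\sqrt{mn}$ depending on $h$ and $a$, exactly mirroring \eqref{eq:Me_bound_genus}.

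To estimate $\sum_{\gamma'\sim\gamma}\prod Q_e$ I would contract all degree-$2$ vertices of $G_{\tilde\gamma}\setminus H$, keeping the half-walk extremities, to obtain a multigraph $\hat G$ whose genus is squeezed between $\hat a-\hat v\le a-v$ (deleting the edges of $H$ cannot increase the genus) and, on the lower side, $\hat v\le 2(a-v)+c's$, since degree-$1$ vertices sit only at the $\le 8s$ extremities and the number of degree-$\ge3$ vertices is controlled as in \eqref{eq:t3}. Classifying the edges of $\hat G$ according to which side of the bipartition their two endpoints lie in and summing over vertex labels via the entrywise bounds \eqref{eq:entrywisePhi}, \eqref{eq:tildeQ}, \eqref{eq:bipartitemoment}, then using that every vertex of $\hat V^2$ has degree $\ge3$ to absorb the surplus $\sqrt{m/n}$-power exactly as in the displayed chain of inequalities inside the proof of Lemma~\ref{lem:counting_lemma}, I obtain a bound of the form $K^{(\cdot)}\rho^{2s(\ell-1)}(\sqrt{mn})^{v-4s(\ell-1)}(\sqrt{m/n})^{a-v}$. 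Collecting the $K$-exponents — the $12(a-v+1)$ from the two genus bounds, the $2s(\ell-1)$ from normalizing $\prod Q_e$, the $-a$ from the singly used $M$-edges, and an extra $O(s)$ coming from the doubled extremity and bad-edge counts — produces the exponent $12(a-v+1)+2s(\ell-1)+40s-a$ asserted in \eqref{eq:R_Me_bound_genus}.

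The main obstacle is entirely combinatorial bookkeeping. Because each $\gamma_i$ is tangled whereas only its halves $\gamma_i^1,\gamma_i^2$ are tangle-free, a bad vertex may be visited up to twice \emph{per half}, hence up to $8s$ times overall, and there are twice as many sub-walk extremities as in the computation for \eqref{eq:Delta}; one must propagate these doubled constants consistently through the chain of inequalities for $h$, $\hat v$, $\hat a$ and the degree profiles, while keeping the $M$-seam triplets — which lie outside $G_\gamma$ but whose middle vertices range freely over $[m]$ and supply the factor $m^{2s}$ already absorbed in the trace expansion — coherent with the vertex-labeling sum. No analytic input beyond the entrywise $\Phi$-bounds is required.
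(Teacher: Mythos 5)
Your proposal is correct and follows essentially the same route as the paper's proof: peel off the initial $\underline{X}$-triplets, isolate the singly-weighted edge set $H$ with $|H|=O(s)+6(a-v+1)$, pass from $|M_e|$ to $Q_e$ on the remaining edges, contract degree-$2$ vertices into a multigraph, classify contracted edges by the bipartition, and use that all of $\hat V^2$ has degree $\geq 3$ to absorb the $\sqrt{m/n}$ imbalance. The only divergences are in the intermediate $O(s)$ constants (the paper keeps $h\leq 10s+6(a-v+1)$ and $t_1\leq 5s$ rather than the doubled counts you anticipate), which are absorbed by the generous $40s$ in the final exponent exactly as you note.
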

\begin{proof}[Proof of Lemma \ref{lem:Tlsk}]
    This is similar to the proof of Lemma \ref{lem:counting_lemma}. We only address the differences. Let $\tilde{\gamma}=(\tilde{\gamma_1},\dots, \tilde{\gamma}_{2s})$ be the sequence of paths  from $\gamma\in \overline T_{\ell,s,k}$ with the initial two steps (the first triplet)  removed in each $\gamma_i, 1\leq i\leq 2s$. From the boundary condition, the associated graph spanned by $\tilde{\gamma}$, denoted by $G_{\tilde \gamma}$, satisfies 
 \[ |E_{\tilde \gamma}|= a, \quad |V_{\tilde \gamma}|=v.\] 
Similarly, we have
 \[\sum_{e\in E_{\tilde{\gamma}}}\tilde{m}_e+t_e=4(\ell-1)s.\]

 
 Let $H$ be the set of edges $e\in E_{\gamma}$ such that $\tilde{m}_e+t_e=1$ and denote $h=|H|$.  Therefore with \eqref{eq:R_dv_upperbound},
 \begin{align}\label{eq:R_upperbound_h}
     h\leq 4s+\sum_{v\in V_{\gamma}^2,\text{bad}} d(v) \leq 10s+6(a-v+1).
 \end{align}
 We also have 
 \[ \sum_{e\in E_{\tilde\gamma}\setminus H} \tilde{m}_e+t_e=4(\ell-1)s-h,\]
 which implies
 \begin{align}
 \sum_{\gamma': \gamma'\sim \gamma }\prod_{e\in E_{\tilde \gamma}}|M_e|^{\tilde m_e+t_e}\leq  
 &(K\rho)^{2s(\ell-1)+h-a}(\sqrt{mn})^{a-h-4s(\ell-1)}\sum_{\gamma': \gamma'\sim \gamma }\prod_{e\in E_{\tilde\gamma}\setminus H} {Q_e},\label{eq:R_Me_bound2}
 \end{align}

Let $G'_{\tilde \gamma}$ be the graph spanned by all edges in $E_{\tilde \gamma}\setminus H$. Let $t_i, t_{\geq i}$ be the number of vertices in $V_{\tilde \gamma}'$ with a degree equal to $i$ and at least $i$, respectively. We then have 
\begin{align} \label{eq:R_degree_count}
v'=t_1+ t_2+ t_{\geq 3} &\geq  v-h, \\
t_1+2t_2+3t_{\geq 3}&\leq \sum_{k\geq 1}kt_k=2(a-h).   \notag
\end{align}
Since  degree-1 vertices can only appear at the endpoints of each $\tilde{\gamma}_i^j, i\leq 2s, j=1,2$,  we must have  
\begin{align}\label{eq:R_t1}
    t_1\leq 5s.
\end{align}
Together with \eqref{eq:R_degree_count}, we obtain that
\begin{align}\label{eq:R_t3}
t_{\geq 3} \leq 2( a- v)+5s.
\end{align}
We reduce the graph $G_{\tilde \gamma}'$ to a multi-graph $\hat{G}_{\tilde \gamma }$ by gluing vertices of degree 2 except for the endpoints of $\tilde\gamma_i^j, i\in [2s], j=1,2$. Let $\hat v$ be the number of vertices in $\hat G_{\tilde \gamma}$.  We partition edges in ${E}'_{\tilde\gamma}$ into $\hat {a}$ sequences of edges as before.
Similar to the proof of \eqref{eq:Delta}, 
\begin{align}
     \sum_{j=1}^{\hat a} q_j= a-h, \quand 
a-v-h\leq \hat{a}-\hat{v}\leq a-v.  \notag
\end{align}
Since the number of degree-2 vertices in $\hat{G}_{\tilde \gamma}$ is bounded by the number of distinct endpoints in $\tilde{\gamma}_i^j, i\in [2s], j=1,2$, we have $t_2\leq 5s$. And from \eqref{eq:R_t1} and \eqref{eq:R_t3},
\begin{align}\label{eq:R_hatv}
\hat{v}&\leq t_1+t_{\geq 3}+5s \leq 2(a-v)+15s,\\
    \hat{a}&\leq 3(a-v)+15s.  \notag
\end{align}

Following the same steps in the proof of \eqref{eq:Delta}, we get
\begin{align}\label{eq:R_Q}
   \sum_{\gamma': \gamma'\sim \gamma }\prod_{e\in E_{\tilde\gamma}\setminus H} {Q_e}\leq (K^2)^{3(a-v)+15s}\rho^{a-h}(\sqrt{mn})^{v+h-a}\left(\sqrt\frac{m}{n}\right)^{a-v}.
\end{align}

Therefore from \eqref{eq:R_upperbound_h}, \eqref{eq:R_Me_bound2},  and \eqref{eq:R_Q},
\begin{align}
    \sum_{\gamma': \gamma'\sim \gamma }\prod_{e\in E_{\tilde \gamma}}|M_e|^{\tilde m_e+t_e}&\leq K^{12(a-v+1)+2s(\ell-1)+40s-a}\rho^{2s(\ell-1)}(\sqrt{mn})^{v-4s(\ell-1)}\left(\sqrt{\frac{m}{n}}\right)^{a-v}.  \notag
\end{align}
This completes the proof of Lemma \ref{lem:Tlsk}.
\end{proof}

Now  we take 
 \begin{align}\label{eq:R_ks}
     \ell\leq  \lfloor \log n\rfloor, \quad s=\left\lfloor \frac{\log \left(\frac{n}{8(d\vee K)^3 K^9}\right)}{24\log\log n}\right\rfloor
 \end{align}
 and let $g=a-v+1$. 
From \eqref{eq:R_va}, we have the first simple constraint on $v,a$ as
 $0\leq v\leq a\leq 4(\ell+1)s.$
 Any edge in $\gamma_{i}^1, i\in [2s]$ not included in any bad triplets must be visited at least twice to have nonzero contribution in \eqref{eq:R_trace_expansion_count}. The number of edges visited  only once by $\gamma_{i}^1, i\in [2s]$ is bounded by $6(g+s)$ from \eqref{eq:R_dv_upperbound}. This implies
 \begin{align}\label{eq:R_upperbound_a}
     a\leq \frac{4ks-6(g+s)}{2}+6(g+s)+2s(2\ell-2k)=2s(2\ell-k)+3(g+s).
 \end{align}
 
From \eqref{eq:R_trace_expansion_count},\eqref{eq:R_Me_bound_genus},  \eqref{eq:R_upperbound_a}, and Lemma~\ref{lem:TLSH},
\begin{align*}
     \dE  \|R_k^{(\ell)}\|^{2s}\leq & \frac{L^{4s}}{(mn)^{s}}\sum_{a=1}^{4s(\ell+1)}\sum_{v=1}^{a} \mathbf{1}\{ a\leq 2s(2\ell-k)+3(g+s)\}(8(\ell+1)s)^{12sg+8s}\\
      &\cdot (1+4p)^{16s(g+2s)}2^{6(g+s)}p^{a-4(\ell-1)s}\\
      &\cdot K^{12g+2s(\ell-1)+40s-a}\rho^{2s(\ell-1)}(\sqrt{mn})^{v-4s(\ell-1)}\left(\sqrt{\frac{m}{n}}\right)^{a-v}\\
      \leq &L^{4s}(mn)^{-s}n(8(\ell+1)s)^{8s+1}(1+4p)^{32s^2}2^{6s}K^{40s} \theta^{4s(\ell-1)}\left( 1\vee \frac{d}{K} \right)^{3s}\\
      &\cdot \sum_{g=0}^{\infty}\left( \frac{2^6(d\vee K)^3K^{9}\left(1+4p\right)^{16s} (8(\ell+1)s)^{12s}}{ n}\right)^g.  \notag
      \end{align*}

For  $n\geq C_1K^{16}$ with a constant  $C_1$ depending only on $C$,   from \eqref{eq:R_ks}, we have 
\[\left(1+4p\right)^{16s}\leq 2, \quad 
\frac{(d\vee K)^3K^{9} (8(\ell+1)s)^{12s}}{n}\leq \frac{1}{8},   \quad n^{\frac{1}{2s}}\leq \log(n)^{13}.
\]
which implies
\begin{align}
        \dE  \|R_k^{(\ell)}\|^{2s}\leq L^{4s}(mn)^{-s}n (8(\ell+1)s)^{8s+1}2^{2s} K^{40s} \theta^{4s(\ell-1)}\left(1\vee \frac{d}{K}\right)^{3s}.   \notag
\end{align}
Recall $K, d\geq 1$ and  $\theta\leq L$.
Then by Markov's inequality,  with probability at least $1-n^{-1/2}$, the following bound holds:
\begin{align}
    \|R_k^{(\ell)}\|\leq \frac{L^{2\ell}}{\sqrt{mn}}K^{20} d^{3/2}\log(n)^{28}.  \notag
\end{align}
This finishes the proof for \eqref{eq:Rk}.

\subsection{Proof of \eqref{eq:HBk} on $HB^{(k)}$}

Recall the definition of $H$ from \eqref{def:H}. We have 
\begin{align*}
    \|HB^{(k-1)}\|^{2s}&\leq \tr[(HB^{(k-1)}(HB^{(k-1)})^*]^s \notag\\
    &\leq \left(\frac{d^2}{mn}\right)^{2s} \sum_{\gamma\in W_{k,s}} \prod_{i=1}^{s} X_{\gamma_{2i-1,2}\gamma_{2i-1,4},\gamma_{2i-1,3}}\prod_{t=2}^{k-1}A_{\gamma_{2i-1,2t},\gamma_{2i-1,2t+2},\gamma_{2i-1,2t+1}}\\
    &\cdot \prod_{t=0}^{k-1}A_{\gamma_{2i,2t},\gamma_{2i,2t+2},\gamma_{2i,2t+1}}X_{\gamma_{2i,2k},\gamma_{2i,2k+2},\gamma_{2i,2k+1}},
\end{align*}
where $W_{k,s}$ is the set of paths defined in \eqref{eq:summand}. Let $G_{\gamma}$ be defined as before, and $a=|E_{\gamma}|$, $v=|V_{\gamma}|$. The set of edges that carries weight from $H$ has cardinality at most $2s$ by the boundary conditions. Hence we have 
\begin{align}
  \dE \|HB^{(k-1)}\|^{2s} & \leq  \left(\frac{d^2}{mn}\right)^{2s} \sum_{\gamma\in W_{k,s}} \left( \frac{d}{\sqrt{mn}}\right)^{a-2s}\left( \frac{L^2}{d^2}\right)^{s(2k-4)}  \notag\\
  &=\sum_{\gamma\in W_{k,s}}\left( \frac{d}{\sqrt{mn}}\right)^{a+2s}\left( \frac{L}{d}\right)^{4s(k-2)}.  \notag
\end{align}
Recall the bound from Lemma \ref{lem:Wks} on $\mathcal W_{k,s}(v,a)$. Since each equivalence class contains at most $n^{v_1} m^{v_2}$ many  paths, we obtain 
\begin{align}
  \dE \|HB^{(k-1)}\|^{2s} & \leq \sum_{a=1}^{4ks}\sum_{v=1}^{a+1} (4(k+1)s)^{6s(a-v+1)+2s} n^{v_1}m^{v_2}\left( \frac{d}{\sqrt{mn}}\right)^{a+2s}\left( \frac{L}{d}\right)^{4s(k-2)}.  \notag 
\end{align}

The imbalance between $v_1$ and $v_2$ comes from the number of cycles in $G_{\gamma}$ and the total number of components of the paths (since on one path, the imbalance from $v_1-v_2$ increases by at most $1$, and the first path of length $k$ introduces an imbalance of $1$). We have the following upper bound that 
\begin{align*}
    v_2-v_1 \leq g-1,
\end{align*}
which implies 
\begin{align}\label{eq:nvmv}
    n^{v_1}m^{v_2}\leq  (\sqrt{mn})^{v} \left( \sqrt{\frac{m}{n}}\right)^{g-1}.
\end{align}
Take $s= \left\lfloor \frac{\log n}{12 \log\log n} \right\rfloor$.
With \eqref{eq:nvmv},
\begin{align*}
    \dE \|HB^{(k-1)}\|^{2s} & \leq (4(k+1)s)^{2s+1}L^{4ks-8s}d^{10s} n(\sqrt{mn})^{-2s}\sum_{g=0}^{\infty}\left(\frac{(4(k+1)s)^{6s}}{n} \right)^g\\
    &\leq 2n(\sqrt{mn})^{-2s}(4(k+1)s)^{2s+1}L^{4ks-8s}d^{10s}. 
\end{align*}
Applying Markov's inequality yields \eqref{eq:HBk}.

\subsection{Proof of \eqref{eq:Delta_chi} and  \eqref{eq:Sk}}

The proof of \eqref{eq:Delta_chi} is similar to the proof of \eqref{eq:Delta}, and one can adapt the proof of \cite[Equation (16.21)]{bordenave2020detection}. We skip the proof.

For \eqref{eq:Sk}, first notice that $H_{ef}^{(2)}$ defined in \eqref{def:H2} is equal to $(TPS^*)_{ef}$ defined in \eqref{eq:TPS} except for $g\in \vec{E}_2$ such that $(e,g,f)$ is a back-tracking path of length 6. This happens only when (1) $g_2=e_2$,  (2) $g_2=f_2$, (3) $f_1=e_3$.
Therefore 
\begin{align}
    -\tilde{H}_{ef}&= \left( M_{e_3e_2}M_{f_1e_2} + M_{e_3f_2}M_{f_1f_2}+ \mathbf{1} \{ f_1=e_3\} \sum_{u\in V_2} M_{e_3u}^2\right)A_{f_1f_3,f_2} \nonumber\\
    &:=M^{(1)}_{ef}+M^{(2)}_{ef}+M^{(3)}_{ef}. \nonumber
\end{align}
Then we can write
\begin{align}
    \|\underline{B}^{(k-1)}\tilde{H} B^{(\ell-k-1)}\|\leq  \sum_{i=1}^3 \|\underline{B}^{(k-1)}M^{(i)} B^{(\ell-k-1)}\|\leq \sum_{i=1}^3 \|\underline{B}^{(k-1)}\| \|M^{(i)} B^{(\ell-k-1)}\|. \nonumber
\end{align}
A similar argument of the bound  \eqref{eq:HBk}  applied to each $i\in [3]$ finishes the proof.

\section{Proof of Theorem \ref{thm:frobenius_bound}}\label{sec:app:frobenius} 
For simplicity, in this section, we assume that $\delta_{j, \ell}^{(i)}$ defined in \eqref{eq:delta} for each $\mathrm{Unfold}_i(T)$ satisfies $\delta_{j, \ell}^{(i)} > c>0$ for all $i, j$, and we place ourselves under the high-probability event of Corollary~\ref{cor:orthogonal}. Note that as $d \to \infty$, $L / d \ll \rho^{(i)}/\sqrt{d}$ for all $i$, and as such
$\tau^{(i)} \leq \tau / \sqrt{d}$.

By a simple application of the triangle inequality,
\begin{align*}
    \norm{T - \hat T}_F &\leq \sum_{j=1}^r \norm*{\nu_j \bigotimes_{i=1}^k w_j^{(i)} - \hat\nu_j \bigotimes_{i=1}^k \hat w_j^{(i)}}_F \\
    &\leq \sum_{j=1}^r |\nu_j - \hat \nu_j| + \nu_j \norm*{\bigotimes_{i=1}^k w_j^{(i)} - \bigotimes_{i=1}^k \hat w_j^{(i)}}_F
\end{align*}
The first term is easily bounded through Corollary \ref{cor:orthogonal}:
\begin{align*}
    |\nu_j - \hat \nu_j| &\leq \frac{\nu_j^2 - \lambda_j^{(1)}}{2\min(\nu_j, \hat \nu_j)} \leq C_1 \nu_j \left(\frac\tau{d} \right)^{\ell},
\end{align*}
and hence by the Cauchy-Schwarz inequality
\begin{equation*}
    \sum_{j=1}^r |\nu_j - \hat \nu_j| \leq C_2 \left(\frac\tau{d} \right)^{\ell} \norm{T}_F.
\end{equation*}
For the second term, we also apply the Cauchy-Schwarz inequality to find 
\begin{align*}
    \sum_{j=1}^r \nu_j \norm*{\bigotimes_{i=1}^k w_j^{(i)} - \bigotimes_{i=1}^k \hat w_j^{(i)}}_F &\leq \norm{T}_F \sqrt{\sum_{j=1}^r \norm*{\bigotimes_{i=1}^k w_j^{(i)} - \bigotimes_{i=1}^k \hat w_j^{(i)}}_F^2} \\
    &= \norm{T}_F \sqrt{\sum_{j=1}^r \left(2 - 2 \prod_{i=1}^k \langle w_j^{(i)}, \hat w_j^{(i)} \rangle\right)}
\end{align*}
From Corollary~\ref{cor:orthogonal}, we get 
\[ \langle w_j^{(i)}, \hat w_j^{(i)} \rangle \geq \frac1{\sqrt{\gamma_j^{(i)}}} - C_3 \left(\frac\tau{d} \right)^{\ell}, \]
where
\begin{equation*}
    \gamma_j^{(i)} = \left\langle \ind, \left(I - \frac{Q^{(i)} Q^{(i)*}}{\nu_j^4 d^2} \right)^{-1} \left( w_{j}^{(i)} \circ w_{j}^{(i)} \right) \right\rangle \leq 1 + K^2 \frac{(\tau/d)^2}{1 - (\tau/d)^2}
\end{equation*}
where the last inequality comes from Equation~\eqref{eq:Phi_scalar_bound} and $\langle \ind, w_j^{(i)} \circ w_j^{(i)} \rangle = \norm*{w_j^{(i)}}^2 = 1$.
Now, it is possible to check that for $a \geq 1$ and $x > 0$ small enough,
\begin{equation*}
    \frac1{\sqrt{1 + a \frac{x}{1-x}}} \geq 1 - ax
\end{equation*}
and applying this to the above bound yields
\[ \langle w_j^{(i)}, \hat w_j^{(i)} \rangle \geq 1 - \left(\frac{K\tau}d\right)^2 - C_3 \left(\frac\tau{d} \right)^{\ell}. \]
Finally, for $a_1, \dots, a_k \geq 0$ such that $\sum a_i \leq 1$, we have $\prod (1 - a_i) \geq 1 - \sum a_i$, and hence
\begin{equation*}
    \sum_{j=1}^r \left(2 - 2 \prod_{i=1}^k \langle w_j^{(i)}, \hat w_j^{(i)} \rangle\right) \leq kr \left(\left(\frac{K\tau}d\right)^2 + C_3 \left(\frac\tau{d} \right)^{\ell}\right).
\end{equation*}
Theorem \ref{thm:frobenius_bound} ensues from the bound $\sqrt{a+b} \leq \sqrt{a} + \frac{b}{2\sqrt{a}}$.

As in \cite{bordenave2020detection}, we could improve the bound of Theorem \ref{thm:frobenius_bound} by considering better estimators of the $w_i$, as well as an optimal shrinkage of the $\hat\nu_i$. However, this only gains constant factors over the ``naive'' ones, hence we chose to keep a simpler exposition.

\section{Proof of Theorem \ref{thm:rank_one_optimal}}\label{sec:appendix_A}

We first treat the case where $x \in \{ \pm 1\}^n$. By the natural homeomorphism between $(\{ \pm 1 \}, \times)$ and $(\dF_2, \oplus)$, each equation of the form $\hat x_{i_1} \dots \hat x_{i_k} = \tilde T_{i_1 \dots i_k}$ can be mapped to a linear equation in $\dF_2$. This is known as the $k$-XORSAT problem, and the optimal sample complexity can be found in \cite{creignou.daude_2003_smooth}:
\begin{theorem}[Proposition 4.1 from \cite{creignou.daude_2003_smooth}]
    Let $\cS$ be a random $k$-XORSAT problem, where each clause of the form $y_{i_1} \oplus \dots \oplus y_{i_k} = 0$ is selected with probability $p$. Then, for $k\geq 3$, there exists a constant $c_k$ such that if $p > c_k n^{1-k}$, then with high probability, the only solution to $\cS$ is $x = 0$.
\end{theorem}
By considering $y_i = \hat x_i x_i$, when $p \geq c_k n^{1-k}$, the only solution to $\hat x_{i_1} \dots \hat x_{i_k} = \tilde T_{i_1 \dots i_k}$ for all $i_1, \dots, i_k$ is $\hat x = x$. Note that this system can be solved through Gaussian elimination in $\dF_2$, which is a polynomial algorithm. Hence, the following holds:
\begin{proposition}\label{prop:app:bool_completion}
    If $p \geq c_k n^{1-k}$, and $T = x^{\otimes k}$ with $x \in \{\pm 1 \}^n$, then there exists a polynomial-time algorithm that outputs an estimator $\hat x$ such that $\hat x = x$ with high probability.
\end{proposition}

Now, if $T = x^{\otimes k}$ for an arbitrary $x\in \mathbb R^n$, then $\sign(T) = \sign(x)^{\otimes k}$. Hence, applying Proposition \ref{prop:app:bool_completion} to $\sign(T)$, we obtain an estimator $\hat x = \sign(x)$. Therefore, we get
\begin{equation}
    \langle \hat x, x \rangle = \langle \sign(x), x \rangle = \norm{x}_1. \nonumber
\end{equation}
Theorem \ref{thm:rank_one_optimal} then ensues from the simple inequality
\begin{equation}
    1 = \norm{x}_2^2 = \sum {x_i^2} \leq \norm{x}_\infty \sum |x_i| = \norm{x}_\infty \norm{x}_1.  \nonumber
\end{equation}
\end{document}